\documentclass[preprint,12pt]{elsarticle}

\usepackage{amssymb}
\usepackage{amsmath}
\usepackage{amsthm}

\usepackage{color}

\newtheorem{thm}{Theorem}
\newtheorem{lem}[thm]{Lemma}

\newtheorem{cor}[thm]{Corollary}

\newtheorem{rmk}[thm]{Remark}
\newtheorem*{notat}{Notation}

\newcommand{\be}{\begin{equation}}
\newcommand{\ee}{\end{equation}}
\newcommand{\bd}{\begin{displaymath}}
\newcommand{\ed}{\end{displaymath}}
\newcommand{\bea}{\begin{eqnarray}}
\newcommand{\eea}{\end{eqnarray}}
\newcommand{\bgt}{\begin{gather}}
\newcommand{\egt}{\end{gather}}

\newcommand{\iu}{\mathrm{i}}

\newcommand{\st}{\,\big|\,}

\newcommand{\id}{I_{\mathrm{d}}}

\newcommand{\N}{\mathbb{N}}

\newcommand{\R}{\mathbb{R}}
\newcommand{\C}{\mathbb{C}}

\newcommand{\cB}{\mathcal{B}}

\newcommand{\cH}{\mathcal{H}}

\newcommand{\cJ}{\mathcal{J}}

\newcommand{\cL}{\mathcal{L}}

\newcommand{\cS}{\mathcal{S}}

\newcommand{\cU}{\mathcal{U}}
\newcommand{\cV}{\mathcal{V}}

\mathchardef\mhyphen="2D

\newcommand{\Real}{{\mathrm{Re}}}
\newcommand{\Imag}{{\mathrm{Im}}}

\DeclareMathOperator*{\slim}{s-lim}

\newcommand{\pot}{q}
\newcommand{\oL}{{L}}
\newcommand{\oLa}{{L^{*}}}
\newcommand{\Lb}{{\Lambda}}
\newcommand{\oLo}{{D_{1}}}
\newcommand{\oLt}{{D_{2}}}
\newcommand{\oLth}{{D_{2}^{1/2}}}
\newcommand{\ioLth}{{D_{2}^{-1/2}}}
\newcommand{\oLot}{{D_{1}D_{2}}}
\newcommand{\oLto}{{D_{2}D_{1}}}

\newcommand{\ures}{\mathbb{U}}
\newcommand{\vres}{\mathbb{V}}

\newcommand{\pchar}{p_{\mathrm{c}}}
\newcommand{\hp}{{h_{\mathrm{p}}}}
\newcommand{\hm}{{h_{\mathrm{m}}}}
\newcommand{\ha}{{h_{\mathrm{a}}}}

\newcommand{\phip}{\phi}
\newcommand{\phim}{\chi}

\newcommand{\ef}{\varphi}
\newcommand{\aef}{\varphi^{*}}

\newcommand{\dom}{\mathrm{dom}}
\newcommand{\ran}{\mathrm{ran}}
\newcommand{\iker}{\mathrm{ker}}

\newcommand{\mur}{\theta}

\journal{Journal of Mathematical Analysis and Applications}

\begin{document}

\begin{frontmatter}



\title{A class of nonselfadjoint spectral differential operators of interest in physics}


\author{Victor Laliena} 

\affiliation{organization={Departamento de Matem\'atica Aplicada and Instituto Universitario de Matem\'aticas y Aplicaciones (IUMA), Universidad de Zaragoza},
            city={Zaragoza},
            country={Spain}}

\begin{abstract}
It is shown that the nonselfadjoint (and non-normal) linear ordinary differential operators of a certain class are spectral operators of scalar type in the sense of Dunford and Bade. Operators of this kind appear in physical problems such as the scattering of spin waves by magnetic solitons.
\end{abstract}

\begin{keyword}
Spectral operators, spectral measures, nonselfadjoint operators, spectral singularities


\MSC[2020] 34L05 47B40 47B15 47B28

\end{keyword}

\end{frontmatter}

\section{Introduction \label{sec:intro}}

Besides its intrinsic mathematical interest, spectral analysis of operators is an important tool in physical and engineering problems. Many theoretical results are best interpreted and confronted with experiments through some kind of spectral analysis. From this point of view, theoretical problems involving spectral operators \cite{Dunford1954,Dunford1958} are considerably simplified. Although there are known classes of non-normal spectral differential operators (for instance some ordinary differential operators with purely discrete spectrum \cite{Schwartz1954,Kramer1957,Turner1966}, some ordinary differential operators with periodic coefficients \cite{McGarvey1965}, or some classes of second order elliptic differential operators defined in $L^2(\R^n)$, with $n\geq 3$ \cite{Schwartz1960,Dunford1971}), it is in general difficult to proof that concrete operators are spectral. The root of the difficulties lies in the possible appearance of spectral singularities,  which are absent in the case of normal operators but are typical otherwise \cite{Schwartz1960}.

In this work it is shown that a class of non-normal ordinary differential operators, not considered before, are spectral operators of scalar type in the sense of Dunford \cite{Dunford1954,Dunford1958} (actually, in the sense of Bade, since the operators are unbounded \cite{Bade1954}). These operators appear, for instance, in the theory of scattering of spin waves by one-dimensional solitons \cite{Laliena2021}, and are defined through a differential expression of the form $\oL u=\oLto u$, where $u$ is a function on $\R$ and
\be
D_i = -D^2 + \pot_i + h_i, \quad (i=1,2). \label{eq:def_L_i}
\ee
In the above expression $D$ represents the derivative, $Du=u^\prime$, $h_2>h_1>0$ are constants, and $\pot_1$ and $\pot_2$ are real functions defined on $\R$. We require, for $i=1,2$,
\begin{enumerate}
\item $\pot_i\in C^1(\R)$, $\pot_i^\prime\in AC(\R)$, $\pot_i^{\prime\prime}\in L^2(\R)$,
\item $\int_\R (1+|x|^3)|\pot^{(k)}_i(x)|dx<\infty$, $0\leq k\leq 2$,
\end{enumerate}
where $AC(\R)$ is the set of complex functions on $\R$ which are absolutely continuous on any compact interval, and the primes and the superscript $(k)$ on a function denote, respectively, the derivatives and the $k$-th derivative of the function.  

To avoid symbol proliferation, we use the same symbols, $\oLo$, $\oLt$, $\oL$, $\oLa$ (the last is defined below), for the differential expressions and for the operators defined by the differential expressions on appropriate domains. Since in this work each differential expression is associated with a unique operator, there is no ambiguity in this respect. It should be clear from the context whether the symbol represents the operator or the differential expression applied to some suitable function.

With the above assumptions, $\oLo$ and $\oLt$ are selfadjoint operators in $L^2(\R)$, with domain $H^2(\R)$, and with spectra bounded from below. For $i=1,2$, the essential spectrum of $D_i$ is $[h_i,\infty)$, and the point spectrum is contained in $(-\infty,h_i]$.  We require further that $\oLt$ be positive definite and have a bounded inverse. That is, denoting the scalar product and the norm in $L^2(\R)$ by $(\cdot,\cdot)$ and $\|\cdot\|$, respectively, there is $c_2>0$ such that $(\oLt u,u)\geq c_2\|u\|^2$ for any $u\in\dom(\oLt)$. 

It may be that $\iker(\oLo)\neq\{0\}$ (this happens generically in applications to spin wave dynamics). The orthogonal projection onto $\iker(\oLo)$, denoted by $P_0$, is a finite rank operator of rank at most two. Let $\id$ be the identity operator and define $P_R=\id-P_0$. The restriction of $\oLo$ to the subspace $\dom(\oLo)\cap\,\ran(P_R)$, denoted by $D_{1R}$, is injective and has a bounded inverse, $D_{1R}^{-1}:\ran(\oLo)\to\ran(P_R)$, and the operator $\oLo+P_0$ is injective and has a bounded inverse given by $(\oLo+P_0)^{-1}=D_{1R}^{-1}P_R+P_0$.

From $\oLo$ and $\oLt$ we build the linear operator $\oL:\dom(\oL)\to L^2(\R)$ as
\be
\begin{gathered}
\dom(\oL) = \big\{u\in\dom(\oLo) \st \oLo u\in \dom(\oLt)\}, \\[4pt]
\oL u=\oLto u, \quad u\in\dom(\oL).
\end{gathered}
\ee
The domain of $\oL$ is dense in $L^2(\R)$ since it contains $\cS(\R)$, the Schwartz space of rapidly decreasing functions. 
By elementary means it is proved that $\oLa$, the adjoint of $\oL$, is given by
\be
\begin{gathered}
\dom(\oLa) = \big\{u\in\dom(\oLt) \st \oLt u\in \dom(\oLo)\}, \\[4pt]
\oLa u=\oLot u, \quad u\in\dom(\oLa).
\end{gathered}
\ee
Again $\cS(\R)\subset\dom(\oLa)$.
By similar means it is proved that $\oL^{**}=\oL$, and therefore, $\oL$ and $\oLa$ are closed operators. They are not selfadjoint unless $\pot_1=\pot_2$, since, on $\dom(\oL)\cap\dom(\oLa)$,
\be
\oLa - \oL = 2(\pot_1^\prime-\pot_2^\prime)D +\pot_1^{\prime\prime}-\pot_2^{\prime\prime}.
\ee
We notice that $\oLa$ is a relatively bounded perturbation of $\oL$, with zero relative bound.

Let us remark two facts that will be used in the proof of some results. First,
 if $f\in\cS(\R)$ then $\oL f$ and $\oLa f$ are in $L^1(\R)$. Second, $\iker(\oL)=\iker(\oLo)$ and if $f\in\iker(\oLo)$  then $f\in\dom(\oLa)$, because in this case $\oLt f=Bf$, where $B$ is the operator of multiplication by $h_2-h_1+\pot_2-\pot_1$, and then $Bf\in H^2(\R)$. Therefore, $\ran(P_0)\subset\dom(\oLa)$.

\begin{notat}
\textnormal{
We denote by primes the derivatives with respect to the \textit{first variable} of any function. 
The $k$-th derivative with respect to the \textit{first variable} of a function is denoted by the superscript $(k)$. 
For a complex number $z$ the symbols $z^{1/2}=\sqrt{z}$ represent the principal branch of the square root of $z$. \textit{Almost all}, abbreviated \textit{a.a.}, and almost everywhere (\textit{a.e.}) are to be understood with respect to the Lebesgue measure.
}

\textnormal{
By $\rho(A)$, $\sigma(A)$, $\sigma_p(A)$, $\sigma_r(A)$, and $\sigma_c(A)$, we denote, respectively, the resolvent set, the spectrum, the point spectrum, the residual spectrum, and the continuous spectrum of the operator $A$.
If $A$ is selfadjoint, its essential spectrum is denoted by $\sigma_e(A)$. Otherwise, the five subsets of the spectrum defined in chapter 9 of Edmunds and Evans \cite{Edmunds2018}, all of which are occasionally called essential spectrum, are denoted by $\sigma_{ek}(A)$, $1\leq k\leq 5$. 
For $z\in\rho(A)$ the resolvent of $A$ is defined by $R_A(z)=(A-z\id)^{-1}$. 
}

\textnormal{
We use the symbol $\id$ for any identity operator. It should be clear from the context in which space acts this identity operator. To lighten the notation we introduce the symbols $\cH=L^2(\R)$ and $\cS=\cS(\R)$. As already said, we denote the scalar product and the norm in $\cH$ by $(\cdot,\cdot)$ and $\|\cdot\|$, respectively. 
}
\end{notat}

\section{Summary of main results \label{sec:results}}

The main results of this work are easily summarized: there is a spectral resolution of the identity for $\oL$ (theorem \ref{thm:spectral_measure}), and $\oL$ is a spectral operator of scalar type in the sense of Dunford and Bade (theorem \ref{thm:spectral_operator}). The same statements hold for $\oLa$.
Theorems \ref{thm:spectral_measure} and \ref{thm:spectral_operator} follow from the three key results described below.
 
\textit{1.} The spectral properties of $\oL$ and $\oLa$ are related to those of the selfadjoint operator $\Lb =\oLth\oLo\oLth$. This fact allows to show that the spectra of $\oL$ and $\oLa$ lie on the real axis and to obtain a homomorphism $B_{\oL}$ between the Boolean algebra of bounded Borel sets of $\R$ and a Boolean algebra of projections contained in $\cL(\cH)$, the elements of which reduce $\oL$ (the adjoint projections reduce $\oLa$). The projections  $B_{\oL}(b)$ are uniformly bounded when restricted to the Borel  sets $b$ contained of any bounded subset of $\R$, what implies that  $\oL$ and $\oLa$ do not have spectral singularities. These ideas are developed in sections \ref{sec:spectrum} and \ref{sec:projections_bounded}.
From these results it remains unclear whether the homomorphism is bounded and whether it can be extended to a homomorphism between the Boolean algebra of Borel set of $\R$ and a Boolean algebra of projections contained in $\cL(\cH)$. 

\textit{2.} There is a spectral expansion associated to $\oL$ (an expansion in terms of the bounded solutions of $\oL u=\lambda u$, or ``generalized eigenfunctions'', see section \ref{sec:expansion}), which can be obtained after a thorough analysis of the resolvent of $\oL$, exploiting the fact that $\oL$ is an ordinary differential operator (part of this analytic work was made long ago by Kemp \cite{Kemp1960} for a more general class of ordinary differential operators which include $\oL$ and $\oLa$, but his results have to be sharpened for this work). The analysis of the Green function is carried out in section \ref{sec:green}, and the spectral expansion (theorem \ref{thm:spectral_resolvent}) and its consequences (corollary \ref{thm:spectral_expansion}) are proved in section \ref{sec:expansion}.

\textit{3.} The spectral expansions provide linear maps between $\cH$ and a certain space of square integrable functions on which the ``transformed $\oL$'' acts multiplicatively. The fact that these maps are continuous (theorem \ref{thm:TnSn_bounded}), and their restriction certain subspaces of $\cH$ are injective and have a continuous inverse (theorem \ref{thm:UVrestricted}) allow to show that $B_{\oL}$ is bounded and can be extended to a bounded homomorphism $E_{\oL}$ between the Boolean algebra of Borel sets of $\R$ and a Boolean algebra of projections contained in $\cL(\cH)$. Theorems \ref{thm:spectral_measure} and \ref{thm:spectral_operator} follow from these results.

\section{Spectrum of $\oL$ and $\oLa$ \label{sec:spectrum}} 

The spectral properties of $\oL$ and $\oLa$ are related to the properties of the selfadjoint operator
$\Lb$, defined by
\be
\begin{gathered}
\dom(\Lb)=\ran\big(\oLt^{-1/2}(\oLo+P_0)^{-1}\oLt^{-1/2}\big), \\[4pt]
 \Lb f=\oLth\oLo\oLth f, \quad f\in\dom(\Lb),
\end{gathered}
\ee
where $\oLth$ is the positive square root of $\oLt$. It is clear that $\dom(\Lb)$ is dense in $\cH$. That $\Lb$ is selfadjoint is proved by elementary means.

The operator $\oLth$ is relatively bounded with respect to $\Lb$, since for any $f\in\dom(\Lb)$,
\be
\oLth f = (\oLo+P_0)^{-1}\ioLth\Lb f+ P_0\oLth f,
\ee
and $P_0\oLth$ is a bounded finite rank operator. It can be seen similarly that $\oLo$ and $\oLt$ are both relatively bounded with respect to both $\oL$ and $\oLa$.

\begin{thm}
\label{thm:resolvents}
$\rho(\oL)=\rho(\oLa)=\rho(\Lb)$, and if $z\in\rho(\oL)$ then
\be
R_{\oL}(z)=\oLth R_{\Lb}(z)\ioLth, \quad R_{\Lb}(z)=\oLth R_{\oLa}(z)\ioLth.
\label{eq:resolvents}
\ee
\end{thm}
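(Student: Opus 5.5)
The plan is to exploit the formal similarity relations
\[
\oL = \oLth\,\Lb\,\ioLth \quad\text{and}\quad \Lb = \oLth\,\oLa\,\ioLth .
\]
These hold as identities of operators on precisely matched domains; once they are established, the resolvent identities follow by conjugation. The first step is to prove the domain relations
\[
\dom(\oL) = \oLth\,\dom(\Lb)\quad(\text{i.e. } u\in\dom(\oL)\iff \oLth u\in\dom(\Lb)),
\]
with $\oL u = \oLth\Lb\oLth u$ there. Likewise, for the adjoint:
\[
f\in\dom(\Lb)\ \text{ with } \ \oLth f \in \dom(\oLa)?
\]
More precisely, I would show $\oLth\dom(\oLa)\supseteq \dots$. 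The cleanest route is to work with the characterization $\dom(\Lb)=\{f\in\dom(\oLth): \oLth f\in\dom(\oLo),\ \oLo\oLth f\in\dom(\oLth)\}$. This follows from the defining range formula together with $(\oLo+P_0)^{-1}$ and the relation $\oLth f=(\oLo+P_0)^{-1}\ioLth\Lb f+P_0\oLth f$ displayed above.

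With this characterization, take $u\in\dom(\oL)$ and set $f=\oLth u$. Then $\ioLth f = u\in\dom(\oLo)$ and $\oLo u\in\dom(\oLt)\subset\dom(\oLth)$, so $\ioLth f\in\dom(\Lb)$ and
\[
\Lb\,\ioLth f=\oLth\oLo u,\qquad \oLth\Lb\,\ioLth f=\oLt\oLo u=\oL u .
\]
The converse inclusion is checked the same way.

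Next comes the resolvent argument. Let $z\in\rho(\Lb)$ and define $T=\oLth R_{\Lb}(z)\ioLth$. This operator is bounded, because $\oLth$ is $\Lb$-bounded, so $\oLth R_{\Lb}(z)$ is bounded. Moreover $T$ maps $\cH$ into $\dom(\oL)$, by the domain correspondence applied to $R_{\Lb}(z)\ioLth g\in\dom(\Lb)$. Then
\[
(\oL-z)T=\oLth(\Lb-z)R_{\Lb}(z)\ioLth=\id,
\]
and $T(\oL-z)u=u$ for $u\in\dom(\oL)$. Hence $z\in\rho(\oL)$ and $R_{\oL}(z)=T$.

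For the reverse inclusion, take $z\in\rho(\oL)$ and propose $S=\ioLth R_{\oL}(z)\oLth$, initially on $\dom(\oLth)$. I must show $S$ extends boundedly and inverts $\Lb-z$. Boundedness is the main obstacle: it requires that $\ioLth R_{\oL}(z)\oLth$ be bounded. I would handle it by duality. Its adjoint is $\oLth R_{\oLa}(\bar z)\ioLth$, which is bounded because $\oLt$, hence $\oLth$, is relatively bounded with respect to $\oLa$, as noted in the paper. Here $R_{\oLa}(\bar z)=R_{\oL}(z)^*$ exists because $\oL$ is closed and $\oL^{**}=\oL$. A bounded densely defined operator whose adjoint is bounded has a bounded closure. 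The identities $(\Lb-z)S=\id$ and $S(\Lb-z)=\id$ on $\dom(\Lb)$ then follow from the domain correspondence together with a closedness argument. This gives $\rho(\oL)=\rho(\Lb)$.

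Finally, $\rho(\oLa)=\overline{\rho(\oL)}$, which equals $\rho(\Lb)$ since $\Lb$ is selfadjoint, so $\rho(\Lb)$ is symmetric under conjugation. Taking adjoints in $R_{\oL}(\bar z)=\oLth R_{\Lb}(\bar z)\ioLth$ gives $R_{\oLa}(z)=\ioLth R_{\Lb}(z)\oLth$ on $\dom(\oLth)$. Rearranging yields the second formula, $R_{\Lb}(z)=\oLth R_{\oLa}(z)\ioLth$. The right-hand side is bounded by the relative boundedness of $\oLth$ with respect to $\oLa$, and it can also be checked directly via $\Lb=\oLth\oLa\ioLth$ on the corresponding domains.
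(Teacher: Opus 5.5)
Your argument is correct in substance, but it closes the chain of inclusions differently from the paper.

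\textbf{Comparison of routes.} The paper proves $\rho(\oL)=\rho(\oLa)$ by hand. One inclusion comes from $\oL-z\id=\oLt(\oLa-z\id)\oLt^{-1}$. The other comes from solving $(\oLa-z\id)g=f$ explicitly with the splitting $g=P_Rg+P_0g$ and the inverse $D_{1R}^{-1}$, with a separate treatment of $z=0$. The paper then gets $\rho(\Lb)\subseteq\rho(\oL)$ and $\rho(\oLa)\subseteq\rho(\Lb)$ from the two similarity relations and relative boundedness.

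You keep the first similarity step and prove $\rho(\oL)\subseteq\rho(\Lb)$ by duality. You then obtain $\rho(\oLa)=\overline{\rho(\oL)}=\rho(\Lb)$ from the general adjoint--resolvent relation and the conjugation symmetry of $\rho(\Lb)$. The second formula comes from taking adjoints of the first. This avoids all the $\iker(\oLo)$ bookkeeping and is shorter. The paper's route is more hands-on and gives an explicit expression for $R_{\oLa}(z)$ through $R_{\oL}(z)$, which is not used later.

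\textbf{A shortcut and a missing detail.} Your duality step can be shortened. Once $\bar z\in\rho(\oLa)$, the exact identity $\Lb-\bar z\id=\oLth(\oLa-\bar z\id)\ioLth$ (the paper's last step) gives $\bar z\in\rho(\Lb)$, hence $z\in\rho(\Lb)$, with no closure argument. If you keep your version, note that the closure argument only yields $(\Lb-z\id)\bar S=\id$. You also need $\iker(\Lb-z\id)=\{0\}$. This holds because an eigenvector $h$ of $\Lb$ for $z$ would make $\oLth h$ an eigenvector of $\oL$, contradicting $z\in\rho(\oL)$.

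\textbf{A correction.} The domain identity you announce, $\dom(\oL)=\oLth\dom(\Lb)$, is false. In fact $\oLth\dom(\Lb)=\ran\big((\oLo+P_0)^{-1}\ioLth\big)$. For $u=(\oLo+P_0)^{-1}\ioLth g$ one has $\oLo u=\ioLth g-P_0u$, which lies in $\dom(\oLt)$ only when $g\in\dom(\oLth)$. So the inclusion $\dom(\oL)\subseteq\oLth\dom(\Lb)$ is strict.

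What the similarity actually gives is $\dom(\oL)=\{\oLth h : h\in\dom(\Lb),\ \Lb h\in\dom(\oLth)\}$. Accordingly, in your verification you should set $h=\ioLth u$, not $f=\oLth u$. Your resolvent step still survives, because for $h=R_{\Lb}(z)\ioLth g$ the extra condition holds: $\Lb h=\ioLth g+zh\in\dom(\oLth)$.
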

\begin{proof}
We prove first that $\rho(\oL)=\rho(\oLa)$. If $z\in\rho(\oLa)$ then the operator  $\oL-z\id=\oLt(\oLa-z\id)\oLt^{-1}$ is injective and has a dense range, and its inverse, $\oLt R_{\oLa}(z)\oLt^{-1}$, is bounded, since $\oLt$ is relatively bounded with respect to $\oLa$. Hence, $z\in\rho(\oL)$, what implies $\rho(\oLa)\subseteq\rho(\oL)$. 

Take now $z\in\rho(\oL)\setminus\{0\}$. Consider the equation $(\oLa-z\id)g=f$, with $f\in\cH$. Acting with $P_0$ we get $P_0g=-z^{-1}P_0f$. Using $g=P_Rg+P_0g$ and acting with $P_R$ we have
\be
(\oLa-z\id)P_Rg = P_Rf+\frac{1}{z}\oLa P_0f.
\ee
The above equation is a relation between elements of $\ran(P_R)$, and then we can apply $D_{1R}^{-1}$ to both sides, obtaining
\be
(\oL-z\id)D_{1R}^{-1}P_Rg = D_{1R}^{-1}\Big(P_Rf+\frac{1}{z}\oLa P_0f\Big).
\ee
Hence we have
\be
P_Rg = D_1R_{\oL}(z)D_{1R}^{-1}\Big(P_Rf+\frac{1}{z}\oLa P_0f\Big).
\ee
Then, equation $(\oLa-z\id)g=f$ has a unique solution $g\in\cH$ for each $f\in\cH$, which is clearly bounded by $\|g\|\leq c\|f\|$, where $c$ does not depend on $f$. Therefore, $z\in\rho(\oLa)$ and thus $\rho(\oL)\setminus\{0\}\subseteq\rho(\oLa)$. If $0\in\rho(\oL)$ then $\iker(\oLo)=\{0\}$ and $D_1$ is an injective operator with a bounded inverse. Thus it is clear that $\oLa$ is injective and has a bounded inverse and therefore $0\in\rho(\oLa)$. All together we have proved $\rho(\oL)\subseteq\rho(\oLa)$. Thus, $\rho(\oL)=\rho(\oLa)$. 

Now we prove that $\rho(\Lb)\subseteq\rho(\oL)$ and $\rho(\oLa)\subseteq\rho(\Lb)$. For $z\in\rho(\Lb)$ it is clear that $\oL-z\id=\oLth(\Lb-z\id)\ioLth$ is an injective operator with a dense range. Its inverse is bounded since $\oLth$ is relatively bounded with respect to $\Lb$. Hence, $z\in\rho(\oL)$ and $R_{\oL}(z)$ is given by the first of equations (\ref{eq:resolvents}). Similarly, for $z\in\rho(\oLa)$ the operator $\Lb-z\id=\oLth(\oLa-z\id)\ioLth$ is injective and has a dense range. Its inverse is bounded since $\oLth$ is relatively bounded with respect to $\oLa$, what implies that $z\in\rho(\Lb)$ and that $R_{\Lb}(z)$ is given by the second of equations (\ref{eq:resolvents}). 
\end{proof}

We notice that equations (\ref{eq:resolvents}) can be cast to the following useful forms:
\begin{gather}
R_{\oL}(z)\!=\!(\oLo+P_0)^{-1}\ioLth\Big(\id\!+\!zR_{\Lb}(z)\Big)\ioLth\!+\!P_0\oLth R_{\Lb}(z)\ioLth\!,
\label{eq:resolvent_L_bounded} 
\\[2pt]
R_{\Lb}(z)\!=\!\ioLth(\oLo+P_0)^{-1}\Big(\!\id\!+\!zR_{\oLa}(z)\!\Big)\ioLth\!\!+\! P_0\oLth R_{\oLa}(z)\ioLth\!.
\label{eq:resolvent_Lb_bounded}
\end{gather}

Theorem \ref{thm:resolvents} implies that $\sigma(\oL)=\sigma(\oLa)=\sigma(\Lb)\subset\R$.
In the next theorem we analyze the components of the spectrum.

\begin{notat}
\textnormal{
It is convenient to introduce the notation $\ha=(h_1+h_2)/2$, $\hp=h_1h_2$, $h_m=-(h_2-h_1)^2/4$, and the open interval $I=(\hp,\infty)$. We denote by $\sigma_p^{\,\prime}(\oL)$ the set of accumulation points of  $\sigma_p(\oL)$ .
}
\end{notat}

\begin{thm}
\label{thm:spectrumL}
The spectra of $\oL$, $\oLa$, and $\Lb$ are equal, component by component: 
$\sigma_p(\oL)=\sigma_p(\oLa)=\sigma_p(\Lb)$,     
$\sigma_r(\oL)=\sigma_r(\oLa)=\sigma_r(\Lb)=\emptyset$, 
$\sigma_c(\oL)=\sigma_c(\oLa)=\sigma_c(\Lb)=\overline{I}$, 
and
$
\sigma_{ek}(\oL)=\sigma_{ek}(\oLa)=\sigma_{e}(\Lb)=\overline{I}\cup\sigma_p^{\,\prime}(\oL)$, for $1\leq k\leq 5$.
The point spectrum is a countable bounded nowhere dense set which has at most two accumulation points: $\sigma_p^{\,\prime}(\oL)\subseteq\{\hm,\hp\}$.
\end{thm}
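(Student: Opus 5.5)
The plan is to transfer everything to the selfadjoint operator $\Lb$, exploiting the intertwining relations $\oL=\oLth\Lb\ioLth$ and $\Lb=\oLth\oLa\ioLth$ from the proof of Theorem \ref{thm:resolvents}, and then to analyse $\Lb$ (equivalently $\oL$) as an ordinary differential operator.

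\textbf{Step 1: point spectra coincide.} If $\oL u=\lambda u$ with $u\neq0$, put $f=\ioLth u$. We check that $f\in\dom(\Lb)$ and $\Lb f=\lambda f$. For $\lambda\neq0$ this holds because $\oLo u=\lambda\oLt^{-1}u$. For $\lambda=0$ it holds because $\iker(\oL)=\iker(\oLo)$, and one has to match these with $\iker(\Lb)$ through the definition of $\dom(\Lb)$ using $(\oLo+P_0)^{-1}$. Conversely, $\Lb f=\lambda f$ gives $u=\oLth f\in\dom(\oL)$. The same argument with $\oLth u$ handles $\oLa$.

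\textbf{Step 2: residual spectra are empty.} The residual spectrum of a closed operator at $\lambda$ corresponds to $\bar\lambda\in\sigma_p$ of the adjoint. Since $\sigma_p(\oLa)=\sigma_p(\oL)\subset\R$, any $\lambda\in\sigma_p(\oLa)$ with $\ran(\oL-\lambda)$ not dense is already in $\sigma_p(\oL)$, so it is not in $\sigma_r(\oL)$. Hence $\sigma_r(\oL)=\emptyset$, and symmetrically $\sigma_r(\oLa)=\emptyset$. For $\Lb$ this is automatic because it is selfadjoint.

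\textbf{Step 3: continuous and essential spectrum.} Here $\sigma_c=\sigma\setminus\sigma_p$. The operators $\oL$ and $\Lb$ differ from the constant-coefficient operator $L_0=(-D^2+h_2)(-D^2+h_1)$ by a perturbation that is relatively compact, since the $\pot_i$ and their derivatives decay. By Fourier transform, $L_0$ has symbol $(k^2+h_1)(k^2+h_2)$, which ranges over $[\hp,\infty)=\overline I$. Weyl-type stability of all $\sigma_{ek}$ under relatively compact perturbations then gives $\sigma_{ek}(\oL)\supseteq\overline I$, using the Edmunds--Evans results for $\sigma_{e1}$ through $\sigma_{e4}$. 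For $\Lb$ the same conclusion is reached through $\sigma_e(\Lb)$ and the resolvent identity (\ref{eq:resolvent_Lb_bounded}), whose difference from the free resolvent is compact.

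On the other hand, the points of $\overline I$ are not eigenvalues of $\oL$. For $\lambda>\hp$, the equation $\oL u=\lambda u$ is a fourth-order ODE whose characteristic roots at infinity are $k^2=-\ha\pm\sqrt{\lambda-\hm}$. Exactly two of these roots are real and oscillatory, and the other two are exponential. The asymptotic (Levinson/Jost) solutions, which exist under the $(1+|x|^3)$ integrability hypothesis, then show that an $L^2$ solution must vanish identically: a Wronskian/flux conservation argument kills the oscillatory components. The endpoint $\hp$ is treated with the $|x|^3$ moment condition. Hence $\overline I\subseteq\sigma_c$.

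\textbf{Step 4: the spectrum off $\overline I$.} Outside $\overline I$ the spectrum of $\Lb$ is discrete except possibly at accumulation points. Eigenvalues can only accumulate where the decaying Jost solutions degenerate, that is, where the characteristic roots collide. This happens at $\lambda=\hm$, where $\sqrt{\lambda-\hm}=0$ and the roots coalesce, and at $\lambda=\hp$, the threshold. Away from these two points, the Jost solutions are analytic in $\lambda$. Eigenvalues are therefore zeros of an analytic Evans-type determinant, which is not identically zero since $\sigma\subset\R$ and $\rho\neq\emptyset$. This gives $\sigma_p'\subseteq\{\hm,\hp\}$.

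Boundedness from below follows from $\Lb$ being bounded below, since $\oLo$ is bounded below and $\oLth$ is positive. Combined with discreteness elsewhere, this gives $\sigma_p$ countable, bounded, and nowhere dense. Finally, $\sigma_{ek}=\overline I\cup\sigma_p'$, since isolated eigenvalues of finite multiplicity (multiplicity at most four for a fourth-order ODE) lie outside every $\sigma_{ek}$, while accumulation points lie inside all of them.

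\textbf{Main obstacle.} I expect the hardest parts to be Steps 3--4: proving the absence of embedded eigenvalues in $\overline I$, and locating the possible accumulation points of $\sigma_p$ exactly at $\{\hm,\hp\}$. Both require detailed asymptotics of the solutions of $\oL u=\lambda u$, uniform in $\lambda$ near $\hm$ and $\hp$. My first attempt would use Volterra integral equations around the exponential/oscillatory basis of $L_0$, with the $(1+|x|^3)$ weight controlling the degenerate points.
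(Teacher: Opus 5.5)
Your Steps 1 and 2 match the paper, and the Weyl-type argument for $\overline{I}\subseteq\sigma_{ek}$ is a legitimate addition. The gap is in Step 3: it relies on the claim that no point of $\overline{I}$ is an eigenvalue, and this claim is \emph{false} in general.

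A flux argument cannot deliver it. For $\lambda\in I$, the solutions of $\oL u=\lambda u$ that are square integrable near $+\infty$ are the multiples of the $e^{-\theta x}$ solution $\phip_1(\cdot,\lambda)$. Those square integrable near $-\infty$ are the multiples of $\phim_4(\cdot,\lambda)$. So an $L^2$ solution has no oscillatory components at either end to begin with, and its flux vanishes trivially. (Besides, $\oL$ is not formally symmetric, so the Lagrange identity pairs solutions of $\oL u=\lambda u$ with solutions of $\oLa w=\lambda w$, not $u$ with itself.) Being an eigenvalue means $\phim_4\parallel\phip_1$, i.e.\ $c_{42}=c_{43}=c_{44}=0$ as in Lemma \ref{thm:sols_bounded_dim}, and this can happen.

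Here is an example satisfying all of the paper's hypotheses. Take $\pot_2=0$, fix $\lambda>\hp$ and $\theta=\theta(\lambda)$, so that $(\theta^2-h_1)(\theta^2-h_2)=\lambda$ and $\theta^2>h_1+h_2$. Let $u$ be smooth, real and even, equal to $e^{-\theta|x|}$ for $|x|\geq R$, with simple zeros only at $\pm a$. Impose $\int_{\R}\cosh(\sqrt{h_2}\,y)\,u(y)\,dy=0$ and $v(a)+u''(a)=0$, where $v=\lambda(-D^2+h_2)^{-1}u$; both conditions are linear in $u$ and easily met. For $\pm x\geq R$ one has $v=(h_1-\theta^2)u+\beta_\pm e^{\mp\sqrt{h_2}\,x}$, with $\beta_\pm$ proportional to $\int_{\R}e^{\pm\sqrt{h_2}\,y}u(y)\,dy$, which vanish by evenness and the $\cosh$ condition. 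Hence $\pot_1=(v+u''-h_1u)/u$ vanishes for $|x|\geq R$, and it is smooth at $\pm a$ by the second condition. Then $\oLo u=v$ and $\oLt v=\lambda u$, so $\oL u=\lambda u$ with $0\neq u\in\dom(\oL)$. This is an eigenvalue embedded in $I$.

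Your control of $\sigma_p$ (bounded above, nowhere dense, accumulating only at $\hm$ or $\hp$) also leans on $\sigma_p\cap\overline{I}=\emptyset$. Your Evans determinant only covers $\lambda<\hp$, where the decaying subspaces are two-dimensional. So nothing in your argument prevents eigenvalues in $I$ from being unbounded or from accumulating at interior points of $I$.

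The paper never asserts that there are no embedded eigenvalues. Later it works explicitly with $\sigma_p(\oL)\cap I$ possibly nonempty: see Lemma \ref{thm:sols_bounded_dim}, the proof of Theorem \ref{thm:spectral_resolvent}, and $M=(\hp,\infty)\setminus\sigma_p(\oL)$ in the final remarks. Its proof of this theorem is soft:
\begin{itemize}
\item equality of point spectra comes from the same eigenfunction correspondences you use;
\item countability comes from selfadjointness of $\Lb$ on the separable $\cH$;
\item boundedness, nowhere density and the location of accumulation points come from the fact that every eigenvalue, embedded or not, is a zero of the boundary values of $W$; $W$ is analytic off $\R$, continuous up to $\R$, and equal to $1+o(1)$ at infinity;
\item $\sigma_r=\emptyset$ and the coincidence of the continuous spectra follow directly from Theorem \ref{thm:resolvents} and the point-spectrum identity;
\item $\sigma_{ek}$ is obtained by deleting isolated eigenvalues of finite multiplicity.
\end{itemize}
So ``$\sigma_c=\overline{I}$'' has to be read with possible embedded eigenvalues allowed. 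You should replace your non-existence claim by control of all eigenvalues through the zeros of $W$, rather than try to prove it.

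A minor point: $\Lb$ is not bounded below merely because $\oLo$ is bounded below and $\oLth>0$. The identity $(\Lb f,f)=(\oLo\oLth f,\oLth f)\geq -m\|\oLth f\|^2$ does not give an $L^2$ bound. The conclusion does hold, but because the negative part of $\oLo$ has finite rank with eigenfunctions in $\dom(\oLth)$.
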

\begin{proof}
To prove that the point spectra of the three operators coincide we notice that if $f$ is an eigenfunction of $\Lb$ corresponding to the eigenvalue $\lambda$, then $\oLth f$ and $\ioLth f$ are eigenfunctions of $\oL$ and $\oLa$, respectively, corresponding to the eigenvalue $\lambda$; if $f$ is an eigenfunction of $\oL$ corresponding to the eigenvalue $\lambda$, then $\ioLth f$ and $\oLt^{-1} f$ are eigenfunctions of $\Lb$ and $\oLa$, respectively, corresponding to the eigenvalue $\lambda$; and if $f$ is an eigenfunction of $\oLa$ corresponding to the eigenvalue $\lambda$, then $\oLth f$ and $\oLt f$ are eigenfunctions of $\Lb$ and $\oL$, respectively, corresponding to the eigenvalue $\lambda$. Since $\Lb$ is selfadjoint and $\cH$ separable, $\sigma_p(\Lb)$ is countable. That $\sigma_p(\Lb)$ is a nowhere dense set follows from the fact that each eigenvalue of $\oL$ is a zero of the function $W$ on $\C$ defined in section \ref{sec:green}, which is analytic in the upper half-plane of $\C$ and continuous on the real axis. The zeros of $W$ form a bounded subset of the real axis which may have accumulation points only in $\{\hm,\hp\}$ (see section \ref{sec:green}).

It is now obvious that the residual spectra of the three operators are empty and that their continuous spectra coincide. Since the spectrum of $\oL$ lies on the real axis, the five sets $\sigma_{ek}(\oL)$, $1\leq k\leq 5$, are equal, and are obtained by removing from $\sigma(\oL)$ the isolated eigenvalues, which have finite multiplicity (see point (i) of theorem 1.6 of chapter 9 of Edmunds and Evans \cite{Edmunds2018}, which clearly holds not only for selfadjoint operators in Hilbert spaces but for any operator in a Banach space whose spectrum contains only accumulation points of  its resolvent set).
Evidently, these statements hold also for $\oLa$. Hence, the ten sets $\sigma_{ek}(\oL)$, $\sigma_{ek}(\oLa)$, $1\leq k\leq 5$, are all equal and coincide with $\sigma_e(\Lambda)=\overline{I}\cup\sigma_p^\prime(\oL)$.
\end{proof}

\section{Spectral projections for bounded Borel sets \label{sec:projections_bounded}}

Equation (\ref{eq:resolvents}) for the resolvent of $\oL$ suggests a way to define a family of spectral projections for $\oL$, as follows. Let $\cB_b$ the algebra of bounded Borel sets of $\R$, $\cB$ the Borel $\sigma$-algebra of $\R$, and $\cL(\cH)$ the Banach algebra of bounded linear operators in $\cH$. 
Let $E_{\Lb}:\cB\to\cL(\cH)$ be the resolution of the identity for $\Lb$. Since $\Lb E_{\Lb}(b)$ is a bounded operator if $b\in\cB_b$, we can define a map $B_{\oL}:\cB_b\to\cL(\cH)$ by
\be
B_{\oL}(b) = \oLth E_ {\Lb}(b)\ioLth = \oLth R_{\Lb}(z)(\Lb-z\id)E_ {\Lb}(b)\ioLth
\label{eq:Bdef}
\ee
for $b\in\cB_b$ and $z\in\rho(\oL)$.
The second equality of the above expression shows that  $B_{\oL}(b)$ is a bounded projection for each bounded Borel set  $b$. It is clear that $B_{\oL}$ inherits from $E_{\Lb}$ all the properties of spectral projections, except the uniform boundedness. That is, $B_{\oL}$ satisfies
\begin{gather}
B_{\oL}(b\cap d) = B_{\oL}(b)B_{\oL}(d),
\\[4pt]
B_{\oL}(b\cup d)=B_{\oL}(b)+B_{\oL}(d)-B_{\oL}(b\cap d),
\end{gather}
for $b,d\in\cB_b$. Therefore, the image of $B_{\oL}$ is a Boolean algebra of projections and $B_{\oL}$ is a homomorphism from the boolean algebra $\cB_b$ onto its image. Moreover, if $\{b_i,i\in\N\}$ is a family of pairwise disjoint bounded Borel sets whose union is a bounded set, then
\begin{gather}
B_{\oL}\big(\cup_{i=1}^\infty b_i\big)f=\sum_{i=1}^{\infty} B_{\oL}(b_i)f, \quad f\in \cH.
\end{gather}
This relation implies the continuity of the projections from above and below, that is, if $\{b_i$, $i\in\N\}$ is a descending sequence of bounded Borel sets, then
\be
B_{\oL}\big(\cap_ib_i\big)f = \lim_{i\to\infty}B_{\oL}(b_i)f, \quad f\in\cH,
\ee
and if $\{b_i, i\in\N\}$ is an ascending sequence of bounded Borel sets whose union is a bounded set, then
\be
B_{\oL}\big(\cup_ib_i\big)f = \lim_{i\to\infty}B_{\oL}(b_i)f, \quad f\in\cH.
\ee
An analogue of the Stone formula is also inherited from $E_{\oL}$:
if $a,b\in\R$, with $a<b$, then
\be
\slim_{\epsilon\to0^+}\frac{1}{2\pi\iu}\int_a^b\!\!\Big(R_{\oL}(\lambda+\iu\epsilon)-R_{\oL}(\lambda-\iu\epsilon)\Big)\,d\lambda = \frac{1}{2}\Big(B_{\oL}\big([a,b]\big) + B_{\oL}\big((a,b)\big)\Big).
\label{eq:stone}
\ee

It is straightforward to see that the projections reduce $\oL$, that is, for any $b\in\cB_b$, 
\be
B_{\oL}(b)R_{\oL}(z) f=R_{\oL}(z) B_{\oL}(b)f, \quad f\in\cH, \label{eq:BL_R}
\ee
and if $f\in\dom(\oL)$ then $B_{\oL}(b)f\in\dom(\oL)$ and
\be
B_{\oL}(b)\oL f= \oL B_{\oL}(b)f. \label{eq:BL_L}  
\ee
We also have the following useful formula, analogous to equation (\ref{eq:resolvent_L_bounded}):
\be
B_{\oL}(b) = (\oLo+P_0)^{-1}\ioLth \Lb E_{\Lb}(b)\ioLth + P_0\oLth E_{\Lb}(b)\ioLth.
\label{eq:projection_L_bounded}
\ee
The above equation implies that $\ran\big(B_{\oL}(b)\big)\subset\dom(\oL)$, since $\ran\big(\Lb E_{\Lb}(b)\big)\subset\dom(\Lb)\subset\dom(\oLth )$ and $\ran(P_0)\subset\dom(\oL)$.

We notice the following property, which will be used later in the proof of some results: if $b$ is a non-empty bounded subset of $\R$, then the family of projections associated to the Borel subsets of $b$ is uniformly bounded, in the sense that there is a constant $c$, which may depend on $b$, such that $\|B_{\oL}(e)\|\leq c$ if $e$ is a Borel set contained in $b$. This follows from equation (\ref{eq:projection_L_bounded}), since $\|\Lb E_{\Lb}(e)\|\leq \sup\{|z|, z\in b\}$ for any Borel set $e\subseteq b$.

It is not apparent that $\cB_{\oL}$ is a bounded homomorphism, in the sense that there is $c>0$ such that $\|B_{\oL}(b)\|\leq c$ for all $b\in\cB_b$. In what follows we shall show that $B_{\oL}$ is actually bounded and that it can be extended to a bounded homomorphism of $\cB$ into a Boolean algebra of projections contained in $\cL(\cH)$. To do this we take advantage of the fact that $\oL$ and $\oLa$ are ordinary differential operators and their resolvents are integral operators whose kernel, the Green function, can be studied by standard analytic means. 

\section{The Green function of $\oL$ \label{sec:green}}

The Green function of $\oL$ is constructed from the solutions of the differential equations $\oL u=zu$, with $z\in\rho(\oL)$. Then, the starting point is the analysis of the solutions of these equations.
For $z\in\C$, the fourth order differential equation $\oL u=z u$ is equivalent to the system of four first order equations 
\be
y^\prime=(A+B)y, \label{eq:system}
\ee 
where $y:\R\to\C^4$ is a vector function, $A$ is a $4\times 4$ constant matrix, and $B:\R\to M_4(\C)$ is a matrix-valued function. The only non-zero matrix elements of $A$ and $B$ are
\be
\begin{gathered}
A_{12}=A_{23}=A_{34}=1, \quad A_{41}=z-\hp, \quad A_{43}=-2\ha, \\[6pt]
B_{41}=\pot_2^{\prime\prime}-(h_1+\pot_1)(h_2+\pot_2), 
\;\; B_{42}=2\pot_2^\prime, \;\; B_{43}=h_1+h_2+\pot_1+\pot_2.
\end{gathered}
\label{eq:AB}
\ee
The characteristic polynomial of $A$ is $\pchar(-\iu\mu)-z$, where 
\be
\pchar(\mu)=\mu^4+2\ha\,\mu^2+\hp, \label{eq:pchar}
\ee
and therefore the eigenvalues of $A$ are $\iu\mu_k$, $1\leq k\leq 4$, where $\mu_k$ are the roots of the polynomial $\pchar(\mu)-z$. The four roots are simple if $z\notin\{\hm,\hp\}$. An eigenvector of $A$ corresponding to the eigenvalue $\iu\mu_k$ is the column vector $p_k$ that has $(\iu\mu_k)^{j-1}$ in the $j$-th row ($1\leq j\leq 4$). Let $\Pi(z)$ be the $4\times 4$ matrix whose $k$-th column is $p_k$, so that $\Pi_{jk}(z)=(\iu\mu_k)^{j-1}$. If all the eigenvalues of $A$ are different, then $\Pi(z)$ is regular. Let us also introduce the $4\times 4$ diagonal matrix $D(x,z)$ whose matrix elements are given by $D_{jk}(x,z)=\exp(\iu\mu_kx)\delta_{jk}$ for $1\leq j,k\leq 4$. Bear in mind that the roots $\mu_k$ are functions of $z$.  

Let $e_i$, $1\leq i\leq n$, be the vectors of the canonical basis of $\C^n$. We use the norms for vectors of $v\in\C^n$ and for matrices $M\in M_n(\C)$ given by $|v|=\sum_i|e_i\cdot v|$ and $|M|=\sum_{ij}|M_{ij}|$.
Let us denote by $\C^+$ and $\C^-$, respectively, the open upper and lower half-planes of $\C$.
The following theorem provides us with two fundamental matrices of the system (\ref{eq:system}) for the case $z\in\C\setminus\R$.
\begin{thm}
\label{thm:sol_system}
There are two matrix-valued functions $\Phi_+$ and $\Phi_-$, with domain $\R\times (\C\setminus\R)$ and taking values on $M_4(\C)$, written as
\be
\begin{gathered}
\Phi_+(x,z) = \Pi(z) \big(\id+\Theta_+(x,z)\big) D(x,z), \\[4pt]
\Phi_-(x,z) = \Pi(z) \big(\id+\Theta_-(x,z)\big) D(x,z),
\end{gathered}
\label{eq:fund_mat}
\ee
for $x\in\R$ and $z\in\C\setminus\R$, which have the following properties:
\begin{enumerate}
\item For fixed $z$, $\Phi_+(\cdot,z)$ and $\Phi_-(\cdot,z)$ are  fundamental matrices of the system (\ref{eq:system}).
\item For fixed $x$, $\Phi_+(x,\cdot)$ and $\Phi_-(x,\cdot)$ are analytic.

\item For fixed $z$, $|\Theta_+(x,z)|$  and $|\Theta_-(-x,z)|$ are $o(1)$ as $x\to\infty$.
\item  For fixed $x\geq0$, $|\Theta_+(x,z)|$ and $|\Theta_-(-x,z)|$ are $O(|z|^{-1/4})$ for  $|z|\to\infty$. 
\item The restrictions of $\Phi_+$ and $\Phi_-$ to $\R\times\C^\sigma$, with $\sigma=\pm$, can be extended to continuous functions, $\Phi_+^{(\sigma)}$ and $\Phi_-^{(\sigma)}$, on $\R\times\overline{\C^\sigma}$. 

\item For fixed $\lambda\in\R$, the functions $\Phi_+^{(\sigma)}(\cdot,\lambda)$ and $\Phi_-^{(\sigma)}(\cdot,\lambda)$ are solution matrices of the system (\ref{eq:system}). 

\item For fixed $\lambda\in\R$, $|\Theta_+^{(\sigma)}(x,\lambda)|$ and $|\Theta_-^{(\sigma)}(-x,\lambda)|$ are $o(1)$ as $x\to\infty$.

\item For fixed $x\geq0$ and if $\lambda\in\R$, $|\Theta_+^{(\sigma)}(x,\lambda)|$ and $|\Theta_-^{(\sigma)}(-x,\lambda)|$ are $O(\lambda^{-1/4})$ as $\lambda\to\infty$.
\end{enumerate}
\end{thm}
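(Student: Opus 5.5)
We construct $\Theta_\pm$ by the classical Levinson-type method of variation of constants, converting the system (\ref{eq:system}) into a Volterra integral equation in the eigenbasis of $A$. Write $y=\Pi(z)w$. Then $w'=(\Delta+\Pi^{-1}B\Pi)w$, where $\Delta=\mathrm{diag}(\iu\mu_k)$. We look for a matrix solution of the form $W=(\id+\Theta)D$. Column by column, $\Theta$ must satisfy
\be
\Theta_{jk}(x)=-\int_{\gamma_{jk}} e^{\iu(\mu_j-\mu_k)(x-t)}\big[\Pi^{-1}B(t)\Pi(\id+\Theta(t))\big]_{jk}\,dt ,
\ee
where the path is chosen so that every exponential kernel is bounded. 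For $\Theta_+$ we take $\gamma_{jk}=(x,\infty)$ when $\Imag(\mu_j-\mu_k)\leq0$, and $\gamma_{jk}=(-\infty,x]$ with a suitable adjustment otherwise. The choice is made so that we obtain decaying solutions at $+\infty$ relative to $D$.

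The cleaner route, and the one I would commit to, is to use $\int_x^\infty$ for the entries that decay and a Levinson dichotomy for the rest. Order the roots by imaginary part. For $z\notin\R$ the four roots $\mu_k=\pm\sqrt{-\ha\pm\sqrt{\ha^2-\hp+z}}$ have nonzero imaginary parts, pairwise distinct, so the dichotomy condition holds. $B$ is integrable on $[0,\infty)$ once we subtract the constant parts. Here I should move the constants $-(h_1h_2)$ and $h_1+h_2$ in $B_{41},B_{43}$ into $A$, which is exactly why $A_{41}=z-\hp$ and $A_{43}=-2\ha$. The remaining $B$ then involves only $\pot_i,\pot_i',\pot_i''$, which lie in $L^1$ with weight $(1+|x|^3)$. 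Solving by successive approximations with this integrable kernel yields $\Theta_+(x,z)$ with
\be
|\Theta_+(x,z)|\leq \exp\Big(|\Pi^{-1}||\Pi|\int_x^\infty|B(t)|dt\Big)-1 .
\ee
This bound gives item 3 (the $o(1)$ as $x\to\infty$). Item 1 follows because $\Pi(\id+\Theta_+)D$ is a solution matrix that is invertible for large $x$, hence everywhere. $\Theta_-$ is built symmetrically with integrals to $-\infty$. Analyticity in $z$ (item 2) follows from uniform convergence of the iterates on compact subsets of $\C\setminus\R$, each iterate being analytic.

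For item 4, note that $|\Pi^{-1}B\Pi|_{jk}$ involves $\prod_{l\neq j}(\mu_j-\mu_l)^{-1}$ times polynomials in $\mu$. With $|\mu_k|\sim|z|^{1/4}$ and the differences $\mu_j-\mu_l$ also of order $|z|^{1/4}$, the entries coming from $B_{41}$ are $O(|z|^{-3/4})$. The entries from $B_{42}\,\mu$ and $B_{43}\,\mu^2$ are worse, of order $O(|z|^{-1/4})$ for the $\mu^2$ term. This gives the $O(|z|^{-1/4})$ rate for fixed $x\geq0$, uniformly because $\int_0^\infty|B|<\infty$.

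Items 5--8 concern the boundary values, and I expect this to be the main obstacle. As $z\to\lambda\in\R$ from $\C^\sigma$, the roots tend continuously to limits, but some become real and the exponential kernels $e^{\iu(\mu_j-\mu_k)(x-t)}$ lose their decay. The kernels remain bounded along the chosen paths, since their moduli are at most $1$ and this persists in the limit. The real difficulty is near $z=\hm,\hp$, where roots coalesce, $\Pi^{-1}$ blows up, and the dichotomy degenerates.

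At those points I would rewrite the kernel in a divided-difference form. For coalescing roots, $(e^{\iu\mu_j s}-e^{\iu\mu_k s})/(\mu_j-\mu_k)$ is bounded by $|s|$, or by $|s|^2$ and $|s|^3$ for triple coalescence at $z=\hp$ where $\mu=0$ is double. This uses the moment hypothesis $\int(1+|t|^3)|\pot^{(k)}|<\infty$ to keep the Volterra iteration convergent uniformly in $z$ near the real axis. Continuity of $\Theta_\pm^{(\sigma)}$ on $\R\times\overline{\C^\sigma}$ then follows from dominated convergence in each iterate. Items 6--8 follow by passing to the limit in the integral equation and the bounds above, with $\lambda\to\infty$ handled exactly as in item 4.
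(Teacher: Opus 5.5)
Your strategy is the one the paper points to; it gives no proof of this theorem, only a reference to Coddington--Levinson and to the appendix. That strategy is Levinson-type successive approximations in the eigenbasis of $A$, with the dichotomy choice of integration paths and a modified basis where roots coalesce. The genuine gap is item 7, which does not follow by ``passing to the limit''.

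For $z\in\C^+$ and $k=3$ one has $\Imag\mu_2>\Imag\mu_3$, so $\Theta_{23}$ must be integrated from a base point $x_0$. As $z\to\lambda>\hp$, $\mu_2\to\nu$ and $\mu_3\to-\nu$, so the kernel loses its decay, and the limit is
\[
\Theta^{(+)}_{+,23}(x,\lambda)=e^{2\iu\nu x}\int_{x_0}^{x}e^{-2\iu\nu t}g(t,\lambda)\,dt ,\qquad g=\big[\Pi^{-1}B\Pi\,(e_3+\Theta_{\cdot 3})\big]_2 .
\]
Its modulus tends to $\big|\int_{x_0}^{\infty}e^{-2\iu\nu t}g\,dt\big|$, which is generically nonzero. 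The same happens for the pair $(1,2)$ when $\lambda<\hm$, where $\Imag\mu_1=\Imag\mu_2$ on the axis, and symmetrically for $\Phi_-$. So the boundary value of column $3$ is the solution with pure asymptotics plus a multiple of column $2$, and item 7 fails for it.

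Removing that multiple would require a function analytic in $\C^+$, continuous up to $\R$, with these boundary values. The natural candidate $\int_{x_0}^\infty e^{-\iu(\mu_2-\mu_3)t}g\,dt$ diverges in $\C^+$ unless $B$ decays exponentially (compare the paper's final remarks). The real-axis solutions of theorem \ref{thm:sols_edo} avoid this only because, on the axis, the pair $(2,3)$ is put in the $\int_x^\infty$ branch; they are not boundary values of your family. What your argument does give is item 7 for the columns kept in $\Phi=\Phi_+P_++\Phi_-P_-$, whose gaps $\Imag(\mu_j-\mu_k)$ stay open for $\lambda>\hm$, $\lambda\neq\hp$. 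For $\lambda<\hm$ it holds only up to adding a multiple of the other kept column, which changes neither $W$ nor $G$. Only these columns enter $W$ and $G$.

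Second, your bound $|\Theta_+(x,z)|\le\exp\big(|\Pi^{-1}||\Pi|\int_x^\infty|B|\big)-1$ is the Volterra (Gronwall) bound, and it is false once some entries are integrated from a base point. For example, if $B$ vanishes beyond $x_0+1$, the right side is $0$ for $x>x_0+1$, while those entries are not. The mixed equation is not of Volterra type. Its iteration converges only under a smallness condition on the tail of $B$ beyond $x_0$. For moderate $|z|$ this forces a large $x_0$, and near $\hm,\hp$ a modified basis to keep the constants bounded, as in theorem \ref{thm:sols_system_branch}. Item 3 then follows from the gap $\delta(z)>0$, by splitting $\int_{x_0}^x e^{-\delta(x-t)}|B(t)|\,dt$ at $x/2$.

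A base point $x_0>0$, however, clashes with item 4. For $0\le x<x_0$, the base-point entries (e.g.\ the pair $(1,4)$) carry factors up to $e^{\Imag(\mu_j-\mu_k)(x_0-x)}$, which grow like $e^{c|z|^{1/4}}$. So large $|z|$ needs base point $0$, as in theorem \ref{thm:sols_system_no_branch}. Changing the base point changes columns by $z$-dependent multiples of faster-decaying columns. The paper's real-axis construction simply accepts a jump at $\lambda_s$, whereas you claim a single analytic family, so you must explain how items 2 and 4 are obtained together.

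Minor: there is no triple coalescence. $\pchar(\mu)-\hp=\mu^2(\mu^2+2\ha)$ has a double root at $0$, and $\pchar(\mu)-\hm=(\mu^2+\ha)^2$ has two double roots.
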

\begin{proof}
The proof of this theorem is not given here. It only uses standard arguments, similar to those of theorem \ref{thm:sols_system_no_branch}, and it is a modification of the proof of theorem 8.1 of chapter 6 of Coddington and Levinson \cite{Coddington1972}.
\end{proof}

Of special interest are the solutions of $\oL u=\lambda u$ for $\lambda\in [\hp,\infty)$. In this case  $\pchar(\mu)-\lambda$ has four roots, two real and two purely imaginary, which are denoted by $\mu_1 = \iu\mur$, $\mu_2 = \nu$, $\mu_3=-\nu$, $\mu_4=-\iu\mur$,
where $\theta>0$ and $\nu\geq0$ are the functions of $\lambda\in[\hp,\infty)$ given by 
\be
\theta(\lambda) =\left(\sqrt{\lambda+h_\mathrm{a}^2-\hp}+\ha\right)^{1/2}, \quad \nu(\lambda) = \left(\sqrt{\lambda+h_\mathrm{a}^2-\hp}-\ha\right)^{1/2}. \label{eq:thetanu}
\ee
Notice that $\theta$ and $\nu$ are nonnegative increasing functions, with $\nu/\theta\leq 1$ and $\theta\geq\theta_0=\sqrt{2\ha}>0$.

We need fundamental sets of solutions of $\oL u=\lambda u$ that satisfy the estimates (\ref{eq:bounds_rs}) of theorem 
\ref{thm:sols_edo}. To this end, we separate the interval $I$ into two subintervals, $(\hp,\lambda_s]$ and $(\lambda_s,\infty)$, where $\lambda_s>\hp$ is sufficiently large.

\begin{thm}
\label{thm:sols_edo}
Let $\lambda_s>\hp$ sufficiently large. For $1\leq k\leq 4$ there are functions $\phip_k:\R\times \bar{I}\to\C$ and $\phim_k:\R\times \bar{I}\to\C$ such that:
\begin{enumerate}
\item
For fixed $\lambda\in I$, $\{\phip_k(\cdot,\lambda), 1\leq k\leq 4\}$ and $\{\phim_k(\cdot,\lambda), 1\leq k\leq 4\}$ are fundamental sets of solutions of $\oL u = \lambda u$. 
\item For fixed $x\in\R$, $\phip_k(x,\cdot)$ and $\phim_k(x,\cdot)$ are analytic in $I\setminus\{\lambda_s\}$.
\item The restrictions to $\R\times[\hp,\lambda_s]$ and to $\R\times(\lambda_s,\infty)$ of each of these functions are continuous functions.
\item For each $x\in\R$, the limits of $\phip_k(x,\lambda)$ and $\phim_k(x,\lambda)$ as $\lambda\to\lambda_s^+$ exist and provide  fundamental sets of solutions (as in point 1) of $\oL u = \lambda_s u$.
\item
For $x\in\R$ and $\lambda\in \bar{I}$ the functions have the form
\be
\begin{gathered}
\phip_k(x,\lambda) = \exp(\iu\mu_kx)\big(1+r_k(x,\lambda)\big), \\[4pt] \phim_k(x,\lambda) = \exp(\iu\mu_kx)\big(1+s_k(x,\lambda)\big), 
\end{gathered}
\label{eq:sols_edo}
\ee
where $r_k$ and $s_k$ satisfy the bounds
\be
\begin{gathered}
|r_k(x,\lambda)|  \leq c\, \lambda^{-1/4} (1+x)^{-1}, \quad  x\geq 0, \\[4pt]
|s_k(x,\lambda)|  \leq c\, \lambda^{-1/4} (1-x)^{-1}, \quad  x\leq 0,
\end{gathered}
\label{eq:bounds_rs}
\ee
where $c>0$ is a constant.
\end{enumerate}
\end{thm}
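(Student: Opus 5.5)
Since $\oL u=\lambda u$ is a fourth-order scalar equation whose potential terms are integrable with the weight $(1+|x|^3)$, I will build the solutions $\phip_k$ (normalized at $+\infty$) and $\phim_k$ (normalized at $-\infty$) as Jost-type solutions of Volterra integral equations. By the reflection $x\mapsto -x$, which preserves the hypotheses on $\pot_i$, it suffices to construct the $\phip_k$ and prove the bound on $r_k$ for $x\ge0$. For $x<0$ the functions $\phip_k$ are defined by continuing the solution of the ODE; no bound is claimed there.

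Write $\oL u-\lambda u=\pchar(-\iu D)u-\lambda u+Vu$, where $V$ is the second-order expression coming from $B$ in (\ref{eq:AB}). Its coefficients are $\pot_1+\pot_2$ (multiplying $D^2$), $2\pot_2^\prime$ (multiplying $D$), and $\pot_2^{\prime\prime}-\pot_1 h_2-\pot_2h_1-\pot_1\pot_2$. All of these lie in $L^1$ with weight $(1+|x|^3)$.

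For fixed $\lambda\in\bar I$ the unperturbed equation has the explicit Green kernel
\[
G(x,t,\lambda)=\sum_{j=1}^4 \frac{e^{\iu\mu_j(x-t)}}{\pchar^\prime(\mu_j)}\times(\text{signs}),
\]
where $\pchar^\prime(\mu_j)=4\mu_j^3+4\ha\mu_j$. For $j=2,3$ one has $\pchar^\prime(\pm\nu)=\pm4\nu(\nu^2+\ha)$, which vanishes as $\nu\to0$, i.e. as $\lambda\to\hp$. For $\phip_k$ I will solve
\[
\phip_k(x)=e^{\iu\mu_kx}-\int_x^\infty K_k(x,t)\,(V\phip_k)(t)\,dt ,
\]
choosing, as in Coddington--Levinson's Theorem 8.1 of chapter 6, the splitting of the modes between $\int_x^\infty$ and $\int_{x_0}^x$ according to the size of $\Real(\iu\mu_j-\iu\mu_k)$. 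This way every kernel term $e^{\iu(\mu_j-\mu_k)(x-t)}$ that appears is bounded. For the exponentially growing mode $\mu_4=-\iu\mur$, the equation may have to be posed on $[x_0,\infty)$ and then continued. The derivatives of $\phip_k$ entering $V\phip_k$ will be handled by working with the system (\ref{eq:system}) in the diagonalizing frame $\Pi(\lambda)$. Equivalently, I integrate by parts the $D^2$ and $D$ terms, which is why the hypotheses involve $\pot_i^\prime$ and $\pot_i^{\prime\prime}$.

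Setting $r_k=e^{-\iu\mu_kx}\phip_k-1$, the iteration gives
\[
|r_k(x)|\le C(\lambda)\int_x^\infty (|\pot_1|+|\pot_2|+|\pot_1^\prime|+|\pot_2^\prime|+|\pot_2^{\prime\prime}|)\,dt,
\]
and the moment condition turns the right-hand side into a bound of order $(1+x)^{-1}$; it even gives $(1+x)^{-3}$, and only $(1+x)^{-1}$ is needed. The prefactor $C(\lambda)$ contains $1/\pchar^\prime(\mu_j)$ times the powers of $\mu_j$ produced by the derivatives in $V$. Those powers are of order $\mur^3$ at most, while $|\pchar^\prime(\mu)|\sim\mur^3$ for the imaginary roots and $\sim\nu^3$ for the real roots. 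Since $\mur\sim\nu\sim\lambda^{1/4}$ for large $\lambda$, I expect $C(\lambda)=O(\lambda^{-1/4})$ on $(\lambda_s,\infty)$, after a careful accounting in which one power of the integrated potential is exchanged for one power of $\mu$. The smallness $C(\lambda)\le 1/2$ for $\lambda>\lambda_s$ is also what makes the Neumann series converge globally. This is the reason for choosing $\lambda_s$ large.

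On $[\hp,\lambda_s]$ the factor $\lambda^{-1/4}$ is a harmless constant, but the obstacle is the threshold $\lambda=\hp$: there $\nu\to0$ and the kernel for the real modes degenerates like $1/\nu$. I expect this to be the main difficulty. To handle it I will replace the exponential differences by the uniformly bounded kernels
\[
\frac{\sin\nu(x-t)}{\nu}\le |x-t|,
\]
as in the standard Jost-solution theory at zero energy. This costs one extra power of $(t-x)$, and that power is absorbed by the $|x|^3$ moment hypothesis. Solving on $[x_0,\infty)$ with $x_0$ large makes the Volterra iteration converge uniformly for $\lambda\in[\hp,\lambda_s]$. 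The solutions are then continued to all of $\R$ by the ODE, and the bound on $r_k$ for $0\le x\le x_0$ follows by Gronwall, uniformly on the compact $\lambda$-interval.

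Analyticity in $\lambda$ on $I\setminus\{\lambda_s\}$ holds because each iterate is analytic and the convergence is locally uniform; the $\mu_j$ are analytic in $\lambda$ on $I$. Continuity up to $\hp$, and existence of the limits as $\lambda\to\lambda_s^+$, follow from uniform convergence on $[\hp,\lambda_s]$ and on $[\lambda_s,\lambda_s+1]$ respectively; on the latter interval the construction is the large-$\lambda$ one. The fundamental-set property for $\lambda\in I$ follows by computing the Wronskian at $x\to+\infty$, where it tends to the Vandermonde-type determinant $\det\Pi(\lambda)\neq0$, since the roots are simple on $I$. The same computation applies to the limiting solutions at $\lambda_s$.
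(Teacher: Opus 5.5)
Apart from one step, your plan is the paper's proof. For $\lambda\ge\lambda_s$ the paper also runs Coddington--Levinson successive approximations on $[0,\infty)$ in the frame $\Pi(\lambda)$. The only estimate needed there is $|\Pi^{-1}B\Pi|\le c\,\nu^{-1}|B|$: the second-order perturbation produces at most $|\mu_j|^2\le\theta^2$, against $|\pchar^\prime(\mu_i)|\ge 2\nu(\theta^2+\nu^2)$. So your count ``of order $\theta^3$'' and the exchange of a power are unnecessary.

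For $\lambda\in[\hp,\lambda_s]$ the paper uses a modified frame $\Psi(x)$, in which $p_3$ is replaced by a difference quotient of $p_3$ and $p_2$. It then poses the equation on $[x_0,\infty)$ and continues by the ODE. Your combined kernel $\sin\nu(x-t)/\nu$ is the scalar form of the same device, with the slightly better bound $1+|x-t|$ instead of $(1+|x|)(1+|t|)$.

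Your displayed estimate keeps only $\int_x^\infty$. The forward piece $\int_{x_0}^x$ must be handled as in the paper, via the gap $\Imag(\mu_j-\mu_k)\ge\theta_0$ and a split at $x/2$. Your Wronskian argument for the fundamental-set property is fine and fills in what the paper leaves implicit.

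The step that fails is analyticity in $\lambda$ (point 2). Uniform convergence on real $\lambda$-intervals does not preserve analyticity. You would need the iterates to extend holomorphically to a fixed complex neighbourhood with uniform bounds.

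For $k=1,4$ this holds, since every kernel entry $e^{\iu(\mu_j-\mu_k)(x-t)}$ that occurs is constant or has a uniform exponential gap. For $k=2,3$ it does not. For example, the second iterate for $\phip_3$ contains
\[
e^{\iu\nu x}\int_x^\infty e^{-2\iu\nu(\lambda)t}\,\tilde B_{23}(t,\lambda)\,dt,\qquad \tilde B=\Pi^{-1}B\Pi .
\]
Its integrand grows like $e^{2\Imag\nu\,t}$ as soon as $\Imag\lambda>0$. For real $\lambda$ it is a half-line Fourier transform of $B$ at the real frequency $2\nu$. With only polynomial moments this is finitely differentiable but in general not analytic; a lacunary sum of bumps $\sum_k2^{-4k}\psi(t-2^k)$ makes it nowhere analytic. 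So for the oscillatory modes you get continuity, and holomorphy on one side of the axis only.

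The paper's appendix asserts the same analyticity (``analytic in a neighborhood of $[\lambda_s,\infty)$'') without further argument. This is therefore a defect of point 2 under the stated hypotheses, not something the paper's route repairs. Real-analyticity needs exponential decay of the $\pot_i$, as in the paper's final remarks. The other points of your plan are unaffected.
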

\begin{proof}
The proof of this theorem relies on standard arguments similar to those of theorem 8.1 of chapter 3 of Coddington and Levinson \cite{Coddington1972}. Since this theorem is a key point for this work, we provide the proof in \ref{app:sols_edo}.
\end{proof}

In the remaining of the paper the functions $\phip_k$ and $\phim_k$, $1\leq k\leq 4$, have the properties of theorem \ref{thm:sols_edo}.
There is a function $C:\bar{I}\to GL(4,\C)$, bounded, analytic in $I\setminus\{\lambda_s\}$, such that  
\be
\phim_k(\cdot,\lambda) = \sum_{j=1}^4 c_{jk}(\lambda)\,\phip_j(\cdot,\lambda),\;\;1\leq k\leq 4, \quad \lambda\in \bar{I}, \label{eq:part:rel_phip_phim}
\ee
where $c_{jk}(\lambda)$ are the matrix elements $C(\lambda)$. For $\lambda\to\infty$ we have the estimate $C(\lambda) =\id+ O(1/\lambda^{1/4})$, which follows from the estimates of theorem \ref{thm:sols_edo}.

Since the differential expression $\oLa u$ has the same structure as $\oL u$, there is an analogue to theorem \ref{thm:sols_edo} concerning the solutions of $\oLa u=\lambda u$. That is, there are functions $\phip_k^*$ and $\phim_k^*$, $1\leq k\leq 4$, with domain $\R\times \bar{I}$, such that, for each $\lambda\in \bar{I}$,  $\{\phip^*_k(\cdot,\lambda), 1\leq k\leq 4\}$ and $\{\phim^*_k(\cdot,\lambda), 1\leq k\leq 4\}$ are fundamental sets of solutions of $\oLa u = \lambda u$. The functions $\phip^*_k$ and $\phim^*_k$ have the properties of $\phip_k$ and $\phim_k$, respectively, listed in points 2-5 of theorem \ref{thm:sols_edo}. The functions corresponding to $r_k$ and $s_k$ of theorem \ref{thm:sols_edo}, and to $c_{jk}$ of equation (\ref{eq:part:rel_phip_phim}), are represented by $r^*_k$, $s^*_k$, and $c_{jk}^*$, respectively.

\begin{lem}
\label{thm:sols_bounded_dim}
If $\lambda\in I$ is not an eigenvalue of $\oL$, the linear space of bounded solutions of $\oL u = \lambda u$ has dimension two, and the linear space of bounded solutions of $\oLa u = \lambda u$ has also dimension two.
\end{lem}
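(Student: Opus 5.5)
We work with the fundamental sets of Theorem \ref{thm:sols_edo}. For $\lambda\in I$ the roots are $\mu_1=\iu\theta$, $\mu_2=\nu$, $\mu_3=-\nu$, $\mu_4=-\iu\theta$, with $\theta>0$ and, since $\lambda>\hp$, $\nu>0$.

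\textbf{Behaviour at $+\infty$.} By (\ref{eq:bounds_rs}), on $x\geq0$:
\begin{itemize}
\item $\phip_1(x,\lambda)=e^{-\theta x}(1+r_1)$ decays exponentially;
\item $\phip_2,\phip_3$ behave like $e^{\pm\iu\nu x}(1+o(1))$ and are bounded;
\item $\phip_4=e^{\theta x}(1+r_4)$ grows exponentially.
\end{itemize}
A solution $u=\sum_j a_j\phip_j$ is bounded on $[0,\infty)$ if and only if $a_4=0$. Indeed, if $a_4\neq0$ then $e^{-\theta x}u(x)\to a_4\neq0$, since the other three terms times $e^{-\theta x}$ tend to $0$. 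So the space $\mathcal{V}_+$ of solutions bounded at $+\infty$ has dimension exactly three. Symmetrically, with the $\phim_k$ and $s_k$ on $x\le0$, the space $\mathcal{V}_-$ of solutions bounded at $-\infty$ is spanned by $\phim_2,\phim_3,\phim_4$ and has dimension three. The solution space has dimension four, so
\[
\dim(\mathcal{V}_+\cap\mathcal{V}_-)\geq 3+3-4=2 .
\]
Equality holds if and only if $\mathcal{V}_++\mathcal{V}_-$ is the whole four-dimensional space.

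\textbf{Upper bound.} We must show $\dim(\mathcal{V}_+\cap\mathcal{V}_-)\le 2$ when $\lambda$ is not an eigenvalue. Suppose instead the intersection $V$ has dimension three. The map sending $u\in V$ to its $\phip$-coefficients $(a_2,a_3)$ at $+\infty$ takes values in $\C^2$, so its kernel is nontrivial. That gives a nonzero $u\in V$ with $u=a_1\phip_1$ on the right. Similarly we need $u$ to also decay on the left, i.e. $u=b_4\phim_4$ there.

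This needs $\dim V\geq3$ to yield a nonzero element killing all four oscillatory coefficients $(a_2,a_3,b_2,b_3)$. Four linear conditions on a three-dimensional space do not suffice, so we refine the argument with a Wronskian/Lagrange identity.

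\textbf{Lagrange identity.} The form is $\int(\oL u)\bar v-u\,\overline{\oLa v}=[u,v](x)\big|$, where $[u,v]$ is the bilinear concomitant of $\oL$. Its right-hand side does not lead directly to a quadratic-form argument, so we use the selfadjoint $\Lb$ instead, through the maps $u\mapsto D_2^{-1/2}u$ used in Theorem \ref{thm:spectrumL}. The plan is:
\begin{enumerate}
\item Show that the concomitant pairing $\{u,v\}=[u,\bar v]$ evaluated at $+\infty$ on the oscillatory parts reduces to a nondegenerate Hermitian form in $(a_2,a_3)$. It should be of the type $c\nu\,(|a_2|^2-|a_3|^2)$, with $c\neq0$, computed from the plane-wave asymptotics. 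The analogous statement holds at $-\infty$.
\item Note that the concomitant is $x$-independent for pairs of solutions of $\oL u=\lambda u$ and $\oLa v=\lambda v$, using the fact that $\oLa$ has the same structure.
\item Use the flux constancy together with the correspondence $u\mapsto \oLt^{-1}u$ between solutions of $\oL$ and $\oLa$ (the formal identity $\oLa\oLt^{-1}=\oLt^{-1}\oL$) to turn this into a flux conservation law for $V$. It should have the form $|a_2|^2-|a_3|^2=|b_2|^2-|b_3|^2$, up to positive constants.
\item Combine the conservation law with $\dim V=3$ to force a nonzero $u\in V$ that is square integrable at both ends. Such a $u$ lies in $\dom(\oL)$ and is an eigenfunction, contradicting the hypothesis.
\end{enumerate}

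\textbf{Main obstacle.} The hard part is step 4: getting a genuine $L^2$ solution from $\dim V=3$. A cleaner alternative, which I would try first, avoids step 4. If $\mathcal{V}_++\mathcal{V}_-$ is not the whole space, then the Wronskian-type function $W(\lambda)$ of Section \ref{sec:green} vanishes. This $W$ is built from $\phip_1,\phip_2,\phim_3,\phim_4$, or from the continuations of $\Phi_\pm$ to the real axis in Theorem \ref{thm:sol_system}. The paper identifies the zeros of $W$ on $I$ with eigenvalues. So for non-eigenvalues $\mathcal{V}_++\mathcal{V}_-$ is everything and the dimension is exactly two.

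\textbf{The adjoint.} The statement for $\oLa$ follows by the same argument applied to $\phip_k^*,\phim_k^*$. Alternatively, $\oLt^{-1}$ maps bounded solutions of $\oL u=\lambda u$ injectively to bounded solutions of $\oLa v=\lambda v$, with inverse $\oLt$. Justifying this requires the oscillatory asymptotics to be preserved under the resolvent of $\oLt$.
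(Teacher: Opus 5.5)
Your lower bound is correct: $\mathcal{V}_\pm$ are three-dimensional, so $\dim(\mathcal{V}_+\cap\mathcal{V}_-)\geq 2$, and the dimension is three only if $\mathcal{V}_+=\mathcal{V}_-$. The gap is the upper bound, and neither of your routes proves it.

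The ``cleaner alternative'' relies on something the paper does not contain. The paper only uses that every eigenvalue is a zero of $W$, and that $N_\pm\subseteq\sigma(\oL)$. It nowhere proves that a zero of $W_\pm$ in $I$ is an eigenvalue; its $M_n$/$N_n$ splitting is built precisely so as not to need this. That converse is as hard as the lemma itself.

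In your main route, step 3 is only sketched, and applying $\oLt^{-1}$ to non-square-integrable solutions is an awkward device. Step 4, the decisive one, is left open.

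Your remark that counting linear conditions cannot give the bound is well founded, and it applies to the paper's proof as well. The paper argues through $C(\lambda)$ that dimension three forces $c_{42}=c_{43}=c_{44}=0$, and then asserts $\phim_4=c_{41}\phip_1$. But a vanishing fourth row of $C(\lambda)$ only gives $\phim_4=c_{14}\phip_1+c_{24}\phip_2+c_{34}\phip_3$. Removing $c_{24}$ and $c_{34}$ needs the extra structure you were looking for.

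That structure is a conserved Hermitian form, and with it step 4 takes two lines. For solutions $u,w$ of $\oL u=\lambda u$ with $\lambda\in I$, set $U=\oLo u$ and $W=\oLo w$. Then $u''=(\pot_1+h_1)u-U$ and $U''=(\pot_2+h_2)U-\lambda u$. Since $\pot_i$ and $\lambda$ are real, the form
\[
\Omega(u,w)=\iu\Big[\big(U'\,\overline{W}-U\,\overline{W'}\big)+\lambda\big(u'\,\overline{w}-u\,\overline{w'}\big)\Big]
\]
has zero derivative and satisfies $\Omega(w,u)=\overline{\Omega(u,w)}$.

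Let $x\to+\infty$ using the vector asymptotics of the appendix. Cross terms between $e^{\iu\nu x}$ and $e^{-\iu\nu x}$ vanish identically for the free problem, and every term containing $e^{-\theta x}$ tends to zero. This gives $\Omega(u,w)=\kappa\big(a_2(u)\overline{a_2(w)}-a_3(u)\overline{a_3(w)}\big)$ on $\mathcal{V}_+$, with $\kappa=-2\nu\big((\nu^2+h_1)^2+\lambda\big)$. This is nonzero because $\nu>0$ on $I$. The same formula in $(b_2,b_3)$ holds on $\mathcal{V}_-$.

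Now suppose the dimension were three, so $\mathcal{V}_+=\mathcal{V}_-$.
\begin{enumerate}
\item Evaluating at $+\infty$ gives $\Omega(\phip_1,w)=0$ for every $w$ in this space, since $a_2(\phip_1)=a_3(\phip_1)=0$.
\item Evaluating instead at $-\infty$ with $w=\phim_2$ and $w=\phim_3$ gives $b_2(\phip_1)=b_3(\phip_1)=0$.
\item Hence $\phip_1$ is a multiple of $\phim_4$. It decays exponentially at both ends and is an eigenfunction, contradicting the hypothesis.
\end{enumerate}

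The same identity also gives the converse your alternative needed. Take a solution witnessing $W_+(\lambda)=0$, so $a_3=b_2=0$. Then $\Omega(u,u)$ yields $|a_2|^2+|b_3|^2=0$, so the solution is square integrable.

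For $\oLa$ you do not need $\oLt^{-1}$. The map $u\mapsto\oLo u$ is a bijection from bounded solutions of $\oL u=\lambda u$ onto bounded solutions of $\oLa v=\lambda v$, with inverse $v\mapsto\lambda^{-1}\oLt v$.
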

\begin{proof}
In this proof the indices $l$ and $k$ take the values $1\leq l\leq 3$ and $2\leq k\leq 4$. Any bounded solution of $\oL u = \lambda u$ is of the form 
$\phi=\sum_l\alpha_l\phip_l(\cdot,\lambda)=\sum_k\beta_k\phim_k(\cdot,\lambda)$, where $\alpha_l$ and $\beta_k$ are complex numbers, which are not necessarily independent, since relation (\ref{eq:part:rel_phip_phim}) implies $\alpha_l = \sum_k c_{lk}\beta_k$ and $\sum_k c_{4k} \beta_k=0$. Unless $c_{42}=c_{43}=c_{44}=0$, the last constraint on the values of $\beta_k$ determines a two dimensional linear subspace of the linear space of all triples $(\beta_2,\beta_3,\beta_4)$, and thus the linear space of bounded solutions of $\oL u = \lambda u$ has dimension two.
In the case $c_{42}=c_{43}=c_{44}=0$ the values of $\beta_k$ are unconstrained and  the linear space of bounded solutions of $\oL u = \lambda u$ has dimension three. But then $\phim_4 = c_{41}\phip_1$, with $c_{41}\neq 0$, and then $\phim_4$ is a nontrivial square integrable solution of $\oL u=\lambda u$, what implies that $\lambda$ is an eigenvalue of $\oL$, a possibility excluded by the hypothesis. The proof of the statement for the solutions of $\oLa u = \lambda u$ is similar. 
\end{proof}

The Green matrix of the system (\ref{eq:system}) is built as follows. For $z\in\C\setminus\R$ we order the roots of $\pchar(\mu)-z$ so that $\Imag\,\mu_1\geq\Imag\,\mu_2 > 0 > \Imag\,\mu_3\geq\Imag\,\mu_4$. This ordering determines the ordering of the columns of the fundamental matrices $\Phi_+(\cdot,z)$ and $\Phi_-(\cdot,z)$. For $x\in\R$ and $z\in\C\setminus\R$ we define the matrix
\be
\Phi(x,z)= \Phi_+(x,z)P_+ + \Phi_-(x,z)P_-,
\ee
where $P_+=\mathrm{diag}(1,1,0,0)$ and $P_-=\id-P_+$. The matrix $\Phi(x,z)$ is regular for all $x\in\R$, and its determinant, $W(z)$, is independent of $x$. For $x,\tau\in\R$ and $z\in\C\setminus\R$ the Green matrix of the system (\ref{eq:system}) is
\be
K(x,\tau,z) = \left\{
\begin{array}{rl}
\Phi_+(x,z)P_+\Phi^{-1}(\tau,z), & x\geq\tau, \\[4pt]
-\Phi_-(x,z)P_-\Phi^{-1}(\tau,z), & x<\tau,
\end{array}
\right.
\ee
and satisfies $|K(x,\tau,z)|\leq k\exp(-\delta|x-\tau|)$, for $x,\tau\in\R$, where $k>0$ and $\delta>0$ depend on $z$ but not on $x$ or $\tau$ \cite{Kemp1960}. 
The Green function of $\oL$ is obtained from the Green matrix of the system (\ref{eq:system}) as $G(x,\tau,z)=K_{14}(x,\tau,z)$.

The properties of $\Phi_+$ and $\Phi_-$ (theorem \ref{thm:sol_system}) imply that the function $W$, defined in $\C\setminus\R$, is analytic and can be continuously extended to the real axis both from the upper and lower open half-planes of $\C$. These continuous extensions define the functions $W_+$ and $W_-$ on $\R$ by
\be
W_{\pm}(\lambda)=\lim_{\epsilon\to0^+} W(\lambda\pm\iu\epsilon), \quad \lambda\in\R.
 \ee
The properties of $\Phi_+$ and $\Phi_-$ also guarantee that for $\lambda\in\R$ the limits
\be
G_+(x,\tau,\lambda)=\lim_{\epsilon\to0^+}G(x,\tau,\lambda+\iu\epsilon), \quad G_-(x,\tau,\lambda)=\lim_{\epsilon\to0^+}G(x,\tau,\lambda-\iu\epsilon),
\ee
exist if $W_+(\lambda)\neq 0$ and $W_-(\lambda)\neq 0$, respectively. 

Let $N_+$ and $N_-$ be the set of zeros of $W_+$ and $W_-$, respectively, and $N=N_+\cup N_-$. Since $W(z)=1+o(1)$ as $|z|\to\infty$, $N_+$, $N_-$, and $N$ are bounded closed nowhere dense subsets of $\R$, and are contained in the spectrum of $\oL$. For $n\in\N$, let us define the sets 
\be
M_n=\big\{\lambda\in I \st |W_+(\lambda)|>1/(2n), |W_-(\lambda)|>1/(2n)\big\},
\ee
and $N_n=\sigma(\oL)\setminus M_n$, so that $\sigma(\oL)=M_n\cup N_n$. Notice that $M_n$ is open and unbounded and $N_n$ is closed and bounded, and that $\overline{M_n}\subseteq M_{n+1}$ and $N_{n+1}\subseteq N_n$. Clearly $\cap_n N_n=N$. We define $M=\cup_n M_n$, and then we have $M\cap N=\emptyset$ and $M\cup N= \sigma(\oL)$. We take $\lambda_s$ (theorem \ref{thm:sols_edo}) large enough so that $[\lambda_s,\infty)\subseteq M_n$ for all $n\in\N$.

The expression $G_+(x,\tau,\lambda)-G_-(x,\tau,\lambda)$, for $x,\tau\in\R$ and $\lambda\in M$, defines a continuous function on $\R\times\R\times M$, bounded on each subset $\R\times\R\times M_n$, with $n\in\N$. For fixed $\tau$ and $\lambda$ the function $G_+(\cdot,\tau,\lambda)-G_-(\cdot,\tau,\lambda)$ is a bounded solution of $\oL u=\lambda u$, and for fixed $x$ and $\lambda$ the function $\overline{G_+(x,\cdot,\lambda)}-\overline{G_-(x,\cdot,\lambda)}$ is a bounded solution of $\oLa u=\lambda u$. Therefore, there are continuous functions $\ef_j:\R\times M\to\C$ and $\aef_j:\R\times M\to\C$, ($j=1,2$), bounded on each subset $\R\times M_n$ ($n\in\N$), such that, for each fixed $\lambda\in M$, $\ef_1(\cdot,\lambda)$ and $\ef_2(\cdot,\lambda)$ are two linearly independent bounded solutions of $\oL u=\lambda u$, and $\aef_1(\cdot,\lambda)$ and $\aef_2(\cdot,\lambda)$ are two bounded solutions of $\oLa u=\lambda u$, and
\be
G_+(x,\tau,\lambda)-G_-(x,\tau,\lambda) = 2\pi\iu \sum_{j=1}^2 \ef_j(x,\lambda)\,\overline{\aef_j(\tau,\lambda)}. \label{eq:phidef}
\ee
The asymptotic analysis shows \cite{Kemp1960} that for fixed $x$ and $\tau$ and $\lambda\to\infty$
\be
G_+(x,\tau,\lambda)-G_-(x,\tau,\lambda) = \frac{2\iu\cos\big(\nu(\lambda)(x-\tau)\big)}{\pchar^\prime\big(\nu(\lambda)\big)}\Big(1+o(1)\Big), \label{eq:asymp_phi}
\ee
where $\nu(\lambda)$ is given by the second of equations (\ref{eq:thetanu}).  Then, we can choose the functions $\ef_j$ so that for $n\in\N$ 
\be
|\ef_j(x,\lambda)|\leq \frac{c_n}{\lambda^{3/8}}, \quad |\aef_j(x,\lambda)|\leq \frac{c_n}{\lambda^{3/8}}, \quad x\in\R, \quad \lambda\in M_n, \label{eq:bounds_phi}
\ee
where the sequence $c_n$ may be unbounded.

\begin{lem}
\label{thm:sols_bounded}
For $1\leq j\leq 2$, $1\leq l\leq 3$, and $2\leq k\leq 4$, there are functions $\alpha_{jl}$, $\beta_{jk}$, $\alpha^*_{jl}$, $\beta^*_{jk}$, with domain $M$ , continuous on $M\setminus\{\lambda_s\}$, and bounded on each subset $\overline{M_n}$, with $n\in\N$, such that for each $x\in\R$ and $\lambda\in M$,
\begin{gather}
\ef_j\big(x,\lambda\big) = \sum_{l=1}^3 \alpha_{jl}(\lambda) \phip_l\big(x,\lambda\big) = \sum_{k=2}^4 \beta_{jk}(\lambda) \phim_k\big(x,\lambda\big),
 \label{eq:sols_bounded_L} \\[4pt]
 \aef_j\big(x,\lambda\big) = \sum_{l=1}^3 \alpha^*_{jl}(\lambda) \phip^*_l\big(x,\lambda\big) = \sum_{k=2}^4 \beta^*_{jk}(\lambda) \phim^*_k\big(x,\lambda\big).
 \label{eq:sols_bounded_La}
\end{gather}
Moreover, there is $c>0$ such that, for each $\lambda \geq \lambda_s$, 
\be
 \lambda^{p_l}|\alpha_{jl}(\lambda)|\leq c, \;\;  \lambda^{p_k}|\beta_{jk}(\lambda)|\leq c,  \;\; \lambda^{p_l}|\alpha^*_{jl}(\lambda)|\leq c, \;\;  \lambda^{p_k}|\beta^*_{jk}(\lambda)|\leq c,  \label{eq:bounds_alpha_beta}
\ee
where $p_i=5/8$ for $i=1,4$ and $p_i=3/8$ for $i=2,3$.
\end{lem}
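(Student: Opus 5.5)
Since $\ef_j(\cdot,\lambda)$ is a bounded solution of $\oL u=\lambda u$, it can be expanded in the fundamental set $\{\phip_k(\cdot,\lambda)\}$. By theorem \ref{thm:sols_edo}, $\phip_4(x,\lambda)=e^{\mur x}(1+r_4)$ grows exponentially as $x\to\infty$, while $\phip_1,\phip_2,\phip_3$ stay bounded for $x\geq0$. Hence the coefficient of $\phip_4$ must vanish. The same argument at $x\to-\infty$ kills the coefficient of $\phim_1=e^{-\mur x}(1+s_1)$. This gives the two representations in (\ref{eq:sols_bounded_L}), and (\ref{eq:sols_bounded_La}) follows in the same way with the starred functions.

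To get the coefficients explicitly, I would invert the fundamental matrix built from $\phip_k$ and its first three derivatives: $\alpha_{jl}(\lambda)$ is given by Cramer's rule from $(\ef_j,\ef_j',\ef_j'',\ef_j''')$ at $x=0$, divided by the Wronskian of $\{\phip_k\}$. By points 2--4 of theorem \ref{thm:sols_edo}, that Wronskian is continuous and nonzero away from $\lambda_s$, with a one-sided limit at $\lambda_s$. The functions $\ef_j$ are continuous in $\lambda$ on $M$ because $G_\pm$ are continuous there. I would fix the choice of $\ef_j,\aef_j$ in (\ref{eq:phidef}) as explicit combinations of the columns of $G_+-G_-$, which are themselves built from $\Phi_\pm^{(\sigma)}$. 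This makes the derivatives $\partial_x^m\ef_j(0,\lambda)$, $m\leq 3$, continuous as well. Continuity of $\alpha_{jl},\beta_{jk}$ on $M\setminus\{\lambda_s\}$ then follows. Boundedness on each $\overline{M_n}$ follows from $|W_\pm|\geq 1/(2n)$, which bounds $G_\pm$ and their $x$-derivatives uniformly on bounded parts of $\overline{M_n}$, together with the large-$\lambda$ estimates below.

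The main step is the decay (\ref{eq:bounds_alpha_beta}) for $\lambda\geq\lambda_s$. Here I would not rely on (\ref{eq:bounds_phi}) alone. Instead I would use the asymptotics (\ref{eq:asymp_phi}), where $\pchar'(\nu)\sim 4\nu^3\sim\lambda^{3/4}$, and choose
\[
\ef_{1,2}\approx c\,\lambda^{-3/8}e^{\pm\iu\nu x},
\]
so that the oscillating parts carry the coefficients $\lambda^{-3/8}$ in front of $\phip_2,\phip_3$ (the indices with $p=3/8$).

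The exponential components $\phip_1$ (and $\phim_4$) are generated only by the perturbation. I would estimate them using the scaled basis
\[
\mathrm{diag}\big(1,(\iu\mu_k)^{-1},(\iu\mu_k)^{-2},(\iu\mu_k)^{-3}\big)
\]
applied to $\Pi(z)$, i.e. $\Pi^{-1}$ together with $|\mu_k|\sim\lambda^{1/4}$. Projecting the leading behaviour of $G_+-G_-$ onto the decaying mode gives an extra factor $\lambda^{-1/4}$. This comes from the $\lambda^{-1/4}$ bound on $r_k,s_k$ in (\ref{eq:bounds_rs}) and on $\Theta_\pm$, and yields $\lambda^{-3/8-1/4}=\lambda^{-5/8}$.

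The hard part is making this extra $\lambda^{-1/4}$ rigorous uniformly in $\lambda$. I would express $\alpha_{j1}$ as a bilinear form pairing $\ef_j$ with the adjoint solution $\phip^*$ at $x=0$; its leading term cancels, and the remainder is controlled by $\Theta_\pm=O(\lambda^{-1/4})$, using the relation $C(\lambda)=\id+O(\lambda^{-1/4})$.
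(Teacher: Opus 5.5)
Your existence argument (growth of $\phip_4$ at $+\infty$ and of $\phim_1$ at $-\infty$) matches the paper's. Your Cramer's-rule route to continuity also works, though it is heavier than the paper's use of lemma \ref{thm:continuity_coefficients}. That lemma needs only continuity and boundedness of the values $\ef_j(x,\lambda)$, $\phip_l(x,\lambda)$ plus linear independence, so you need neither to re-choose $\ef_j$ nor to control $x$-derivatives.

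The genuine gap is in the decay estimates (\ref{eq:bounds_alpha_beta}), which are the substance of the lemma. Through them, it also affects boundedness on the unbounded part of $\overline{M_n}$.

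First, you obtain $\alpha_{j2},\alpha_{j3}=O(\lambda^{-3/8})$ from ``$\ef_{1,2}\approx c\lambda^{-3/8}e^{\pm\iu\nu x}$''. But (\ref{eq:asymp_phi}) holds only for fixed $x,\tau$ and with an $o(1)$ error, so it gives no uniform control of the coefficients. The bound you set aside, (\ref{eq:bounds_phi}), is exactly what suffices. Evaluate $\ef_j=\sum_l\alpha_{jl}\phip_l$ at $x_m=2\pi m/\nu(\lambda)$ and at $y_m=x_m+\pi/(2\nu(\lambda))$, and let $m\to\infty$. Since $r_l\to0$ and $\phip_1$ decays, the limits are $\alpha_{j2}+\alpha_{j3}$ and $\iu(\alpha_{j2}-\alpha_{j3})$. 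Both are bounded by $\sup_x|\ef_j(x,\lambda)|\leq c_1\lambda^{-3/8}$, recalling $[\lambda_s,\infty)\subseteq M_1$. The same argument at $-\infty$ handles $\beta_{j2},\beta_{j3}$.

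Second, you leave the $\lambda^{-5/8}$ bound for $\alpha_{j1}$ and $\beta_{j4}$ heuristic. The extra factor $\lambda^{-1/4}$ cannot come from ``projecting the leading behaviour of $G_+-G_-$'', because (\ref{eq:asymp_phi}) carries only an $o(1)$ error. The missing idea is elementary linear algebra with the connection matrix:
\begin{itemize}
\item Substitute $\phim_k=\sum_i c_{ik}\phip_i$ into $\ef_j=\sum_{k=2}^4\beta_{jk}\phim_k$ and compare with $\sum_{l=1}^3\alpha_{jl}\phip_l$. This gives $\alpha_{jl}=\sum_{k=2}^4 c_{lk}\beta_{jk}$, together with the constraint $\sum_{k=2}^4 c_{4k}\beta_{jk}=0$ expressing that there is no $\phip_4$ component.
\item Since $C(\lambda)=\id+O(\lambda^{-1/4})$, the constraint forces $\beta_{j4}=-(c_{42}\beta_{j2}+c_{43}\beta_{j3})/c_{44}=O(\lambda^{-1/4-3/8})$.
\item Then $\alpha_{j1}=c_{12}\beta_{j2}+c_{13}\beta_{j3}+c_{14}\beta_{j4}=O(\lambda^{-5/8})$.
\end{itemize}
The extra $\lambda^{-1/4}$ is simply the size of the off-diagonal entries of $C$. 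No bilinear concomitant and no refined asymptotics of $G_\pm$ are needed. Your pairing with $\phip^*$ at $x=0$, if made precise, would have to reproduce this computation, but as written it does not establish the bound.
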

\begin{proof}
In this proof the indices take the values $1\leq j\leq 2$, $1\leq l\leq 3$, and $2\leq k\leq 4$.
For $\lambda\in M$, $\ef_j(\cdot,\lambda)$ is a unique linear combination of $\phip_m(\cdot,\lambda)$, with $1\leq m\leq 4$. The bounded\-ness of $\ef_j(\cdot,\lambda)$ for each $\lambda\in M$ implies that the coefficient of $\phi_4(\cdot,\lambda)$ has to be zero. This linear combination defines the functions $\alpha_{jl}$. The functions $\ef_j$, $\phip_l$ are continuous and bounded on each subset $\overline{\R^+}\times(M_n\setminus\{\lambda_s\})$. By lemma \ref{thm:sols_edo} and lemma \ref{thm:continuity_coefficients} of appendix \ref{app:ancillary} the functions $\alpha_{jl}$ are continuous on $M_n\setminus\{\lambda_s\}$, for each $n\in\N$.
Since $\overline{M_n}\subset M_{n+1}$, the restriction of $\ef_j$ to $\R\times \overline{M_n}$ is continuous and bounded. By lemma \ref{thm:sols_edo}, the restrictions of $\phip_l$ to $\overline{\R^+}\times \big(\overline{M_n}\cap [\hp,h_s]\big)$ are continuous and bounded, and the restrictions of $\phip_l$ to $\overline{\R^+}\times \big(\overline{M_n}\cap (h_s,\infty)\big)$ can be extended to continuous bounded functions on $\overline{\R^+}\times \big(\overline{M_n}\cap [h_s,\infty)\big)$. Then, from lemma \ref{thm:continuity_coefficients} we infer that the functions $\alpha_{jl}$ are bounded on each bounded subset of $\overline{M_n}$. To see that they are bounded on $\overline{M_n}$, take $\lambda\in \overline{M_n}$ large enough and consider the sequences of positive numbers $x_m=2\pi m/\nu(\lambda)$, $y_m=x_m+\pi/\big(2\nu(\lambda)\big)$, $m\in\N$. From the boundedness of the sequences $\ef_j(x_m,\lambda)$ and $\ef_j(y_m,\lambda)$ it is easily obtained that the restrictions of $\alpha_{j2}$ and $\alpha_{j3}$ to $\overline{M_n}$ are bounded functions. Then, from the boundedness of $\ef_j(0,\lambda)$ and the estimates (\ref{eq:bounds_rs}), we find that $\alpha_{j1}$ is also bounded on $\overline{M_n}$. 
The proofs of the continuity and boundedness of $\beta_{jk}$, $\alpha_{jl}^*$, and $\beta_{jk}^*$ are similar.

The estimates (\ref{eq:bounds_phi}) imply that that $\alpha_{j2}(\lambda)$, $\alpha_{j3}(\lambda)$, $\beta_{j2}(\lambda)$, and $\beta_{j3}(\lambda)$ are $O\big(1/\lambda^{3/8}\big)$ for $\lambda\to\infty$. From the relations 
$\alpha_{jl}(\lambda)=\sum_{k} c_{lk}(\lambda)\beta_{jk}(\lambda)$ and $\sum_{k} c_{4k}(\lambda)\beta_{jk}(\lambda) = 0$,
 and the asymptotics $c_{im}(\lambda)=\delta_{im}+O(1/\lambda^{1/4})$, $1\leq i,m\leq 4$, we find that $\alpha_{j1}(\lambda)$ and $\beta_{j4}(\lambda)$ are both $O(1/\lambda^{5/8})$. The estimates (\ref{eq:bounds_alpha_beta}) follow taking into account the boundedness of $\alpha_{jl}$ and $\beta_{jk}$ on each set $\overline{M_n}$.
The estimates for $\alpha^*_{jl}$ and $\beta^*_{jk}$ are proved in a similar way.
\end{proof}

\section{Spectral expansion \label{sec:expansion}} 

For $z\in\rho(\oL)$ the resolvent $R_{\oL}(z)$ is an integral operator whose kernel is the Green function, $G(x,\tau,z)$.
We obtain an integral formula for the Green function in the following standard way. Let $\lambda_0$ be a real number such that $(\lambda_0,\infty)$ contains the spectrum of $\oL$ and the point $\hm$, and for each $\epsilon>0$ let $C_\epsilon$ be the curve on the complex plane formed by the points which are at a distance $\epsilon$ from $(\lambda_0,\infty)$, oriented in clockwise sense. The curve separates the complex plane into two disjoint subsets, one of which contains the spectrum of $\oL$. For fixed $x$ and $\tau$, the function $G(x,\tau,\cdot)$ is analytic in the subset that does not contain the spectrum of $\oL$. From the behavior of $G(x,\tau,\xi)$ for large $|\xi|$ in the subset that does not contain the spectrum of $\oL$ and the Cauchy theorem we obtain, for $z\notin(\lambda_0,\infty)$ and $\epsilon$ sufficiently small,
\be
G(x,\tau,z) = \frac{1}{2\pi\iu} \int_{C_\epsilon}\frac{G(x,\tau,\xi)}{\xi-z}\,d\xi.
\ee
We divide $C_\epsilon$ into three pieces, as follows. Take $\eta>0$. The first piece, $S_{\epsilon}$, is the subset of points $\xi\in C_\epsilon$ such that $\Real\,\xi<\lambda_0$. The second piece consists of the points $\xi\in C_\epsilon$ with $\lambda_0\leq\Real\,\xi\leq\hp+\eta$, and the third piece is formed by the points of $C_\epsilon$ with real part larger than $\hp+\eta$. Then, we have 
\be
\begin{gathered}
G(x,\tau,z) = \frac{1}{2\pi\iu}\int_{S_{\epsilon}}\frac{G(x,\tau,\xi)}{\xi-z}d\xi+
\frac{1}{2\pi\iu}\int_{\lambda_0}^{\hp+\eta}F_\epsilon(x,\tau,z,\lambda)d\lambda
\\[4pt]
+ \frac{1}{2\pi\iu}\int_{\hp+\eta}^{\infty}\!\!\!\!F_\epsilon(x,\tau,z,\lambda)d\lambda,
\end{gathered}
\label{eq:resolvent_pieces}
\ee
where
\be
F_\epsilon(x,\tau,z,\lambda)=\frac{G(x,\tau,\lambda+\iu\epsilon)}{\lambda+\iu\epsilon-z}-\frac{G(x,\tau,\lambda-\iu\epsilon)}{\lambda-\iu\epsilon-z}. \label{eq:Fdef}
\ee

The next lemma is used in the proof of part of theorem \ref{thm:spectral_resolvent}.

\begin{lem}
\label{thm:resolvent_integral_ab}
For each $a,b\in\R$, with $a<b$, and $z\in\rho(\oL)\setminus[a,b]$, we have
\be
\begin{gathered}
\slim_{\epsilon\to0^+} \frac{1}{2\pi\iu} \int_{a}^{b}\left(\frac{R_{\oL}(\lambda+\iu\epsilon)}{\lambda+\iu\epsilon-z}-\frac{R_{\oL}(\lambda-\iu\epsilon)}{\lambda-\iu\epsilon-z}\right)
d\lambda \\[4pt]
= \frac{1}{2}R_{\oL}(z)\Big(B_{\oL}\big((a,b)\big)+B_{\oL}\big([a,b]\big)\Big).
\end{gathered}
\label{eq:resolvent_integral_ab}
\ee
\end{lem}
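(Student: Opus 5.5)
The plan is to reduce the statement to the Stone formula (\ref{eq:stone}) through the first resolvent identity. For $\xi\in\rho(\oL)$ and $z\in\rho(\oL)$ with $\xi\neq z$ we have
\[
\frac{R_{\oL}(\xi)}{\xi-z}=R_{\oL}(z)\,\frac{R_{\oL}(\xi)-R_{\oL}(z)}{\xi-z}\cdot\frac{1}{1}+\frac{R_{\oL}(z)}{\xi-z},
\]
that is, $R_{\oL}(\xi)/(\xi-z)=R_{\oL}(z)R_{\oL}(\xi)+R_{\oL}(z)/(\xi-z)$. Apply this with $\xi=\lambda\pm\iu\epsilon$. The integrand in (\ref{eq:resolvent_integral_ab}) then splits into two terms:
\[
R_{\oL}(z)\big(R_{\oL}(\lambda+\iu\epsilon)-R_{\oL}(\lambda-\iu\epsilon)\big)
+R_{\oL}(z)\Big(\frac{1}{\lambda+\iu\epsilon-z}-\frac{1}{\lambda-\iu\epsilon-z}\Big).
\]
The resulting integrals over $[a,b]$ are Bochner/Riemann integrals of norm-continuous operator functions for $\epsilon>0$, since $\lambda\pm\iu\epsilon\in\rho(\oL)$ by Theorem \ref{thm:resolvents}. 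So $R_{\oL}(z)$, being bounded, can be pulled out of the integral.

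\textbf{First term.} Since $R_{\oL}(z)$ is bounded, strong convergence is preserved under left multiplication by it. By (\ref{eq:stone}),
\[
\slim_{\epsilon\to0^+}\frac{1}{2\pi\iu}\int_a^b R_{\oL}(z)\big(R_{\oL}(\lambda+\iu\epsilon)-R_{\oL}(\lambda-\iu\epsilon)\big)d\lambda
=\frac12 R_{\oL}(z)\Big(B_{\oL}\big((a,b)\big)+B_{\oL}\big([a,b]\big)\Big).
\]
This is exactly the right-hand side of (\ref{eq:resolvent_integral_ab}).

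\textbf{Second term.} This is the scalar integral
\[
\frac{1}{2\pi\iu}\int_a^b\Big(\frac{1}{\lambda+\iu\epsilon-z}-\frac{1}{\lambda-\iu\epsilon-z}\Big)d\lambda
\]
multiplied by the fixed operator $R_{\oL}(z)$. Because $z\notin[a,b]$, the distance from $z$ to $[a,b]$ is positive. For $\epsilon$ smaller than half that distance the integrand is bounded uniformly in $\lambda$ and tends pointwise to $0$, so by dominated convergence the scalar integral tends to $0$ and the second term vanishes in norm.

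\textbf{Main obstacle.} The only delicate point is justifying that (\ref{eq:stone}) holds in the form used. The Stone formula for $\Lb$ is standard. Transporting it to $\oL$ via $R_{\oL}(\xi)=\oLth R_{\Lb}(\xi)\ioLth$ requires some care, because $\oLth$ is unbounded: one should use (\ref{eq:resolvent_L_bounded}), whose pieces are bounded operators applied to $\id+\xi R_{\Lb}(\xi)$ and to $\oLth R_{\Lb}(\xi)$. Since the paper states (\ref{eq:stone}) as inherited, I take it as given, and the lemma then follows from the two steps above.
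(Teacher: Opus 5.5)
Your proof is correct but takes a different route from the paper's. One small slip: your first display has a spurious extra factor $R_{\oL}(z)$ and a stray $\cdot\frac{1}{1}$. The ``that is'' form, $R_{\oL}(\xi)/(\xi-z)=R_{\oL}(z)R_{\oL}(\xi)+R_{\oL}(z)/(\xi-z)$, is the correct one, and it is what you actually use.

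You apply the first resolvent identity to $\oL$ itself. This reduces the lemma in two lines to the Stone-type formula (\ref{eq:stone}) for $\oL$, plus a scalar term that is $O(\epsilon)$ because $z\notin[a,b]$.

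The paper instead works at the level of $\Lb$ from the start:
\begin{itemize}
\item It inserts (\ref{eq:resolvent_L_bounded}), so that only bounded operators sandwich quantities built from $R_{\Lb}$.
\item It splits $\xi/(\xi-z)=1+z/(\xi-z)$ into the pieces $A_\epsilon$, $B_\epsilon$, $h_\epsilon$.
\item It gets $\lim A_\epsilon$ from Stone's formula for the selfadjoint $\Lb$.
\item It computes $\lim B_\epsilon(z)$ via the functional calculus of $\Lb$, using the pointwise limit of the uniformly bounded functions $g_\epsilon$.
\item It reassembles the result using $\id+zR_{\Lb}(z)=\Lb R_{\Lb}(z)$ and $(\oLo+P_0)^{-1}\oLo f=P_Rf$ on $\dom(\oLo)$.
\end{itemize}

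Your route is shorter. It uses nothing about $\oL$ beyond $\sigma(\oL)\subset\R$ and (\ref{eq:stone}). The paper's route is self-contained relative to standard selfadjoint theory, whereas (\ref{eq:stone}) is only asserted in section \ref{sec:projections_bounded} as inherited.

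As you point out, justifying (\ref{eq:stone}) needs essentially the same device the paper uses here. After (\ref{eq:resolvent_L_bounded}), the identity terms cancel. The $\xi R_{\Lb}(\xi)$ term leads to the function $t\mapsto \frac{t}{\pi}\bigl(\arctan\frac{b-t}{\epsilon}-\arctan\frac{a-t}{\epsilon}\bigr)$, which is bounded uniformly in $\epsilon$. Dominated convergence in the functional calculus, together with (\ref{eq:projection_L_bounded}), then identifies the limit. So the total work is comparable.

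Conversely, your resolvent-identity trick applied to $R_{\Lb}$ would replace the paper's explicit computation of $g_\epsilon$ and $\lim B_\epsilon(z)$ by a direct appeal to Stone's formula for $\Lb$.
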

\begin{proof}
Using equation (\ref{eq:resolvent_L_bounded}) we can rewrite the left-hand side of equation (\ref{eq:resolvent_integral_ab}) as the strong limit as $\epsilon\to0^+$ of
\be
\begin{gathered}
 (\oLo+P_0)^{-1}\ioLth  \big(A_\epsilon(z)+zB_\epsilon(z)\big) \ioLth+ P_0 \oLth B_\epsilon(z) \ioLth  + \\[4pt]
h_\epsilon(z) (\oLo+P_0)^{-1}\oLt^{-1},
\end{gathered}
\label{eq:resolvent_integral_ab_cont}
\ee
where
\begin{gather}
A_\epsilon(z)=\frac{1}{2\pi\iu}\int_{a}^{b} \Big(R_{\Lb}(\lambda+\iu\epsilon)-R_{\Lb}(\lambda-\iu\epsilon)\Big)d\lambda, \\[4pt]
B_\epsilon(z) = \frac{1}{2\pi\iu}\int_{a}^{b} \Big(\frac{R_{\Lb}(\lambda+\iu\epsilon)}{\lambda+\iu\epsilon-z}-\frac{R_{\Lb}(\lambda-\iu\epsilon)}{\lambda-\iu\epsilon-z}\Big)\,d\lambda, \\[4pt]
h_\epsilon(z)=\frac{1}{2\pi\iu}\int_{a}^{b}\left(\frac{1}{\lambda+\iu\epsilon-z}-\frac{1}{\lambda-\iu\epsilon-z}\right)d\lambda.
\end{gather}
It is clear that $h_\epsilon(z)$ vanishes as $\epsilon\to0^+$ since $z\notin[a,b]$. The strong limit of $A_\epsilon(z)$ is given by the  analogue of Stone's formula (\ref{eq:stone}). To obtain an expression for $B_\epsilon(z)$ we take advantage of the operational calculus for the selfadjoint operator $\Lb$. On $[a,b]\times\R$ we define the bounded Borel function $f_\epsilon$ by
\be
f_\epsilon(\lambda,\xi) = \frac{1}{2\pi\iu}\left(\frac{1}{(\lambda+\iu\epsilon-z)(\xi-\lambda-\iu\epsilon)}-\frac{1}{(\lambda-\iu\epsilon-z)(\xi-\lambda+\iu\epsilon)}\right),
\ee
for $\lambda\in[a,b]$ and $\xi\in\R$. From $f_\epsilon$ we define the function $g_\epsilon$ on $\R$ by
\be
g_\epsilon(\xi) =\int_a^b f_\epsilon(\lambda,\xi)\,d\lambda, \quad \xi\in\R.
\ee
Then, by lemma 4.1 of Teschl \cite{Teschl2009},  $B_\epsilon(z)=g_\epsilon(\Lb)$. The integral defining $g_\epsilon(\xi)$ can be performed by elementary means, and it is easy to see that $g_\epsilon$ is bounded uniformly in $\epsilon$ if $z\neq a,b$ and $\epsilon>0$ is sufficiently small.  For $z\notin[a,b]$ we obtain
\be
g(\xi) = \lim_{\epsilon\to0^+} g_\epsilon(\xi) =\frac{1}{2}\Big(\chi_{(a,b)}(\xi)+\chi_{[a,b]}(\xi)\Big) \frac{1}{\xi-z}, \quad \xi\in\R,
\ee
where $\chi_G$ is the characteristic function of the set $G$. The operational calculus for $\Lb$ (lemma 4.2 and theorem 3.1 of Teschl \cite{Teschl2009}) gives
\be
\slim_{\epsilon\to0^+} B_\epsilon(z) = \slim_{\epsilon\to0^+} g_\epsilon(\Lb) = g(\Lb) = R_{\Lb}(z)\frac{1}{2}\Big(E_{\Lb}\big((a,b)\big)+E_{\Lb}\big([a,b]\big)\Big).
\ee
Then, the left-hand side of equation (\ref{eq:resolvent_integral_ab_cont}) is equal to
\be
\begin{gathered}
\frac{1}{2} (\oLo+P_0)^{-1}\ioLth  \big(\id+zR_{\Lb}(z)\big) \Big(E_{\Lb}\big((a,b)\big)+E_{\Lb}\big([a,b]\big)\Big) \ioLth + \\[4pt]
+\frac{1}{2} P_0 \oLth R_{\Lb}(z) \Big(E_{\Lb}\big((a,b)\big)+E_{\Lb}\big([a,b]\big)\Big) \ioLth  ,
\end{gathered}
\ee
which, using $\id+zR_{\Lb}(z)=\Lb R_{\Lb}(z)$, the definition of $\Lb$, and $(\oLo+P_0)^{-1}\oLo f=P_Rf$ for $f\in\dom(\oLo)$, can be cast to the form
\be
\oLth R_{\Lb}(z)\frac{1}{2}\Big(E_{\Lb}\big((a,b)\big)+E_{\Lb}\big([a,b]\big)\Big) \ioLth.
\ee
The statement of the lemma is obtained from the above expression by inserting $\ioLth\oLth$ between $R_{\Lb}(z)$ and the spectral projections, what is allowed since the range of $E_{\Lb}([a,b])$ is contained in the domain of $\oLth$.
\end{proof}

For $f\in\cS$ we define on $M$ the the functions $T_jf$, $S_jf$, $j=1,2$, by
\be
T_jf(\lambda)=\int_{-\infty}^\infty f(\tau)\,\overline{\aef_j(\tau,\lambda)}\,d\tau, \quad
S_jf(\lambda)=\int_{-\infty}^\infty f(\tau)\,\overline{\ef_j(\tau,\lambda)}\,d\tau, 
\label{eq:hatf}
\ee
for $\lambda\in M$. For each $n\in\N$ the restrictions of the functions $\ef_j$ and $\aef_j$ to $\R\times M_n$ are continuous and bounded, and therefore the restrictions of $T_jf$ and $S_jf$ ($j=1,2$) to $M_n$ are continuous, bounded, integrable and square integrable functions, since, for $\lambda\in M_n$,
\be
\lambda \big|T_jf(\lambda)\big| = \left|\int_{-\infty}^\infty\oL f(x) \overline{\aef_j(x,\lambda)}\,dx\right| \leq \frac{c_n}{\lambda^{3/8}} \int_{\R} |\oL f(x)|\,dx.
\ee
To derive the above inequality we use of integration by parts, which is allowed since $f\in\cS$, the estimates (\ref{eq:bounds_phi}), and the fact that $\oL f\in L^1(\R)$ if $f\in\cS$. The analogous properties for $S_jf$ are shown similarly.

\begin{rmk}
\textnormal{
The bounds of the restrictions of $\ef_j$ and $\aef_j$ to $\R\times M_n$ need not be uniform in $n$, and thus $T_jf$ and $S_jf$ may be unbounded on $M$ (but they are continuous). Therefore, they do not belong to $L^2(M)$ necessarily. 
}
\end{rmk}

\begin{thm}
\label{thm:spectral_resolvent}
Let $n\in\N$. For $z\in\rho(\oL)\setminus(\lambda_0,\infty)$, for each $f\in\cS$, and for $x\in\R$, 
\begin{gather}
R_{\oL}(z)f(x) = R_{\oL}(z)B_{\oL}(N_n) f(x) 
+\int_{M_n}\sum_{j=1}^2\frac{\ef_j(x,\lambda)T_jf(\lambda)}{\lambda-z}\,d\lambda,
\label{eq:spectral_resolvent_L}
\\[4pt]
R_{\oLa}(z)f(x) = R_{\oLa}(z)B^*_{\oL}(N_n) f(x) 
+\int_{M_n}\sum_{j=1}^2\frac{\aef_j(x,\lambda)S_jf(\lambda)}{\lambda-z}\,d\lambda.
\label{eq:spectral_resolvent_La}
\end{gather}
\end{thm}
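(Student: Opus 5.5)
The proof starts from the contour representation (\ref{eq:resolvent_pieces}). I apply it to $f\in\cS$ and integrate against $f(\tau)$, which yields an operator identity for $R_{\oL}(z)f(x)$ as $\epsilon\to0^+$. The integrals over the pieces of $C_\epsilon$ are then reorganized according to the decomposition $\sigma(\oL)=M_n\cup N_n$ rather than by the point $\hp+\eta$. The guiding idea is this. On $M_n$ the boundary values $G_\pm$ exist and are uniformly bounded, so the jump can be computed pointwise from (\ref{eq:phidef}). On the closed bounded set $N_n$ one cannot take pointwise limits, and the operator version of the limit, Lemma \ref{thm:resolvent_integral_ab}, is used instead.

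First, $S_\epsilon$ and the part of $C_\epsilon$ lying over $(\lambda_0,\infty)\setminus\sigma(\oL)$ contribute nothing in the limit. The resolvent is analytic near those real points, so the two boundary values coincide and $F_\epsilon\to0$. On $S_\epsilon$, the contour can be shrunk away by analyticity.

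Next, cover $N_n$ by finitely many intervals. Since $N_n$ is closed and bounded and disjoint from $\overline{M_{n-1}}$, I would choose a finite union $J$ of intervals $[a_i,b_i]$ with $N_n\subset J$ and endpoints in $M_n$. The endpoints can be taken outside $\sigma_p$ and avoiding $z$. Lemma \ref{thm:resolvent_integral_ab} applied to each $[a_i,b_i]$ gives the strong limit $R_{\oL}(z)B_{\oL}(J)$, because the endpoints carry no spectral mass.

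The remaining region $J\setminus N_n$ lies in $M_n$, as does the complement of $J$ in $\sigma(\oL)\cap(\lambda_0,\infty)$. On $M_n$ I pass to the limit pointwise inside the integral: $F_\epsilon\to(G_+-G_-)/(\lambda-z)$ by continuity of $\Phi_\pm^{(\sigma)}$ (Theorem \ref{thm:sol_system}). Then (\ref{eq:phidef}) and Fubini produce $\sum_j\ef_j(x,\lambda)T_jf(\lambda)/(\lambda-z)$. To reconcile the regions, I write $B_{\oL}(J)=B_{\oL}(N_n)+B_{\oL}(J\cap M_n)$ and show that on a compact piece $K\subset M_n$
\be
R_{\oL}(z)B_{\oL}(K)f(x)=\int_K\sum_{j=1}^2\frac{\ef_j(x,\lambda)T_jf(\lambda)}{\lambda-z}\,d\lambda .
\ee
This identity follows by running the same pointwise argument with Lemma \ref{thm:resolvent_integral_ab} applied to intervals inside $M_n$. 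There both the strong limit and the pointwise limit are available, and they must agree.

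The main obstacle is justifying the interchange of the limit $\epsilon\to0^+$ with the $\lambda$-integral over the unbounded part of $M_n$, together with the $\tau$-integration. This needs a bound on $G(x,\tau,\lambda\pm\iu\epsilon)$ that is uniform in $x,\tau$, in $\lambda\in M_n$, and in small $\epsilon$, with enough decay in $\lambda$. I would get it from the explicit construction of $K$ via $\Phi_\pm$ and $W$, using $|W|\geq 1/(4n)$ near $M_n$ and the large-$\lambda$ estimates of Theorem \ref{thm:sol_system} (items 4 and 8). Together with $T_jf(\lambda)=O(\lambda^{-11/8})$, which comes from integrating by parts with $\oL f\in L^1$, this supplies a dominating function and allows dominated convergence.

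The statement for $\oLa$ is proved identically. One uses the Green function of $\oLa$, the relation $G^*(x,\tau,z)=\overline{G(\tau,x,\bar z)}$, and the adjoint projections $B^*_{\oL}$; the roles of $\ef_j,T_j$ are then played by $\aef_j,S_j$.
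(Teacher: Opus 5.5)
Your strategy works and is organized differently from the paper's, but one step is wrong as stated.

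\textbf{Comparison with the paper.} The paper cuts $C_\epsilon$ at $\hp+\eta$ and applies Lemma \ref{thm:resolvent_integral_ab} on $(\lambda_0,\hp+\eta)$. It then splits the $\epsilon$-integral over $I_\eta$ itself into pieces over $N_{n\eta}$ and $M_{n\eta}$. The $N_{n\eta}$ piece is written as the integral over $I_{s\eta}$ minus the integrals over the countably many components $I_{s\eta}\cap I_k$ of $N_n^c$. The lemma is applied to each of these, and $\sigma_p(\oL)\cap\overline{I_k}=\emptyset$ disposes of the endpoint terms. The $M_{n\eta}$ piece is handled by dominated convergence, (\ref{eq:phidef}) and Fubini.

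You instead take the operator limit over a set $J\supseteq N_n$. A single interval $[a,\lambda_s]$ suffices, since $N_n\subset(\lambda_0,\lambda_s)$ and $[\lambda_s,\infty)\subseteq M_n$. You convert the part $J\cap M_n$ back to integral form only after the limit, through the identity $R_{\oL}(z)B_{\oL}(K)f=\int_K\sum_j\ef_jT_jf/(\lambda-z)\,d\lambda$.

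What this buys:
\begin{itemize}
\item The $\epsilon$-limit is taken only over finitely many intervals plus the tail. Any countable decomposition of $J\cap M_n$ is then done on $\epsilon$-independent identities.
\item You therefore never interchange $\epsilon\to0^+$ with an infinite sum over $k$. The paper does this in (\ref{eq:deta}) without comment, and justifying it actually requires your identity, i.e.\ agreement of the strong and pointwise limits on pieces of $M_n$.
\item Pointwise limits are only taken on sets whose closure lies in $M_n$.
\end{itemize}
The price is proving that identity, which anticipates what the paper later obtains from Lemma \ref{thm:fgh_projections}.

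\textbf{The false step.} Your opening claim that the part of $C_\epsilon$ over $(\lambda_0,\infty)\setminus\sigma(\oL)$ contributes nothing is false. There $F_\epsilon\to0$ only pointwise, with no majorant near eigenvalues. By Lemma \ref{thm:resolvent_integral_ab}, a piece $(\lambda_e,\lambda_e+\delta)\subset\rho(\oL)$ next to an eigenvalue $\lambda_e<\hp$ contributes $\frac{1}{2}R_{\oL}(z)B_{\oL}(\{\lambda_e\})f$ in the limit. Discarding such gaps would lose the eigenvalue part of $R_{\oL}(z)B_{\oL}(N_n)f$. What remained over $J\cap\sigma(\oL)$ would also no longer be a union of intervals to which the lemma applies.

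Relatedly, the left endpoint of $J$ cannot lie in $M_n$, because $N_n\supseteq\sigma(\oL)\cap(-\infty,\hp]$.

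The consistent bookkeeping is as follows. Choose $a\in\rho(\oL)$ with $\lambda_0<a<\min\sigma(\oL)$, and discard only $(\lambda_0,a)$; its closure lies in $\rho(\oL)$, so convergence is uniform there. Let the lemma applied to all of $J$ absorb the gaps. Then $B_{\oL}(J)=B_{\oL}(N_n)+B_{\oL}(J\cap M_n)$, because $B_{\oL}(b)=0$ for bounded Borel $b\subseteq\rho(\oL)$.

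\textbf{Two smaller points.} First, $J\cap M_n$ is not compact: it accumulates at $\partial M_n\subseteq N_n$. After proving your identity on compact intervals inside $M_n$, you must exhaust $J\cap M_n$ by such intervals. This uses the strong countable additivity of $B_{\oL}$ on bounded sets and dominated convergence on the integral side, where the integrand is bounded.

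Second, on $(\lambda_s,\infty)$ the decay $T_jf=O(\lambda^{-11/8})$ is a property of the limit function. Dominated convergence needs an $\epsilon$-uniform majorant instead. You can get it from the same integration by parts done before the limit, $R_{\oL}(\xi)f=\xi^{-1}\big(R_{\oL}(\xi)\oL f-f\big)$ with $\oL f\in L^1(\R)$. This must be combined with an $\epsilon$-uniform bound on $\sup_{x,\tau}|G(x,\tau,\lambda\pm\iu\epsilon)|$, which the paper also simply asserts.
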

\begin{proof}
We prove relation (\ref{eq:spectral_resolvent_L}). The proof of relation (\ref{eq:spectral_resolvent_La}) is similar. Take $f\in\cS$, multiply equation both sides of  (\ref{eq:resolvent_pieces}) by $f(\tau)$ and integrate over $\tau\in\R$. 
Hence we get $R_{\oL}(z)f(x)=A_\epsilon f(x) +B_{\eta\epsilon}f(x)+C_{\eta\epsilon}f(x)$, where each term of the right-hand side of this equation is the contribution to $R_{\oL}(z)f(x)$ of the corresponding term of the right-hand side of equation (\ref{eq:resolvent_pieces}).  It is clear that $A_\epsilon f$ vanishes as $\epsilon\to0^+$, and that
\be
B_{\eta\epsilon}f=\frac{1}{2\pi\iu}\int_{\lambda_0}^{\hp+\eta}\left(\frac{R_{\oL}(\lambda+\iu\epsilon)}{\lambda+\iu\epsilon-z}-\frac{R_{\oL}(\lambda-\iu\epsilon)}{\lambda-\iu\epsilon-z}\right)
d\lambda\,f.
\label{eq:Bne}
\ee
Applying lemma \ref{thm:resolvent_integral_ab}, we get that the limit of $B_{\eta\epsilon}f$ as $\epsilon\to0^+$ and $\eta\to0^+$ is $R_{\oL}(z)B_{\oL}\big([\lambda_0,\hp]\big)f$. 

To obtain $C_{\eta\epsilon}f$ we define $M_{n\eta}=M_n\cap I_\eta$ and $N_{n\eta}=N_n\cap I_\eta$, where $I_\eta=(\hp+\eta,\infty)$. Notice that $M_{n\eta}\subseteq M_{n\eta^\prime}$ and $N_{n\eta}\subseteq N_{n\eta^\prime}$ if $\eta>\eta^\prime$, and $\cup_{\eta>0} M_{n\eta}=M_n$ and $\cup_{\eta>0} N_{n\eta}=N_n\cap I$. The integral over $I_\eta$ that defines $C_{\eta\epsilon}f$ is equal to the sum of an integral over $N_{n\eta}$ and an integral over $M_{n\eta}$, so that we have  $C_{\eta\epsilon}f=D_{\eta\epsilon}f+E_{\eta\epsilon}f$, where $D_{\eta\epsilon}f$ and $E_{\eta\epsilon}f$ are the contribution of the integrals over $N_{n\eta}$ and $M_{n\eta}$, respectively. 

Let us study first $D_{\eta\epsilon}f$, which is given by an expression like the right-hand side of equation (\ref{eq:Bne}), with the integral over $N_{n\eta}$ instead of over $(\lambda_0,\hp+\eta)$. Let us introduce the bounded interval $I_{s\eta}=(\hp+\eta,\lambda_s)$. It is clear that $N_{n\eta} = N_n\cap I_{s\eta}$, and therefore the integral over $N_{n\eta}$ is equal to the integral over $I_{s\eta}$ minus the integral over $I_{s\eta}\cap N_n^c$. Since $N_n^c$ is an open set, it is equal to the union of an at most countable family of disjoint open intervals, $I_k$, with $k$ in some subset $K$ of $\N$. Hence, $I_{s\eta}\cap N_n^c=\cup_k (I _{s\eta}\cap I_k)$. Thus $D_{\eta\epsilon}f$ is expressed as an integral over $I_{s\eta}$ minus the sum of integrals over $I _{s\eta}\cap I_k$. Applying lemma \ref{thm:resolvent_integral_ab} to each of these integrals we get that the limit as $\epsilon\to0^+$ of $D_{\eta\epsilon}f$ is
\be
\frac{1}{2}R_{\oL}(z)\Big(
B_{\oL}(I_{s\eta})+B_{\oL}(\overline{I_{s\eta}})-\sum_{k\in K} \Big(B_{\oL}\big( I _{s\eta}\cap I_k \big)
+B_{\oL}\big(\overline{ I _{s\eta}\cap I_k } \big)\Big)\Big) f. \label{eq:deta}
\ee
It is clear that $\sigma_p(\oL)\cap I$ is contained in the interior of $N_n$, and therefore $\sigma_p(\oL)\cap\overline{I_k}=\emptyset$ for all $k\in K$. This implies $B_{\oL}\big( \overline{I _{s\eta}\cap I_k}\big)=B_{\oL}(\overline{I_{s\eta}}\cap I_k)$, and then from the expression (\ref{eq:deta}) we get
\be
\begin{gathered}
\lim_{\epsilon\to0^+} D_{\eta\epsilon}f = 
\frac{1}{2}R_{\oL}(z)\Big(
B_{\oL}(I_{s\eta}\cap N_n)+B_{\oL}(\overline{I_{s\eta}}\cap N_n)
\Big)f \\
= \frac{1}{2}R_{\oL}(z)\Big(
B_{\oL}(I_{s\eta})+B_{\oL}(\overline{I_{s\eta}})
\Big)B_{\oL}(N_n)f.
\end{gathered}
\ee
The limit $\eta\to0^+$ of the above expression is equal to $R_{\oL}(z)B_{\oL}(I\cap N_n)f$. Since the spectral projections $B_{\oL}(b)$ are supported on the spectrum of $\oL$, and since $\sigma(\oL)\cap[\lambda_0,\hp]\subset N_n\cap [\lambda_0,\hp]$, we obtain that  the limit $\epsilon\to0^+$ and $\eta\to0^+$ of $B_{\eta\epsilon}f+D_{\eta\epsilon}f$ is equal to $R_{\oL}(z)B_{\oL}(N_n)$, which is the first term of the right-hand side of equation (\ref{eq:spectral_resolvent_L}).

It remains to analyze the contribution of $E_{\eta\epsilon}f$, which is given by
\be
E_{\eta\epsilon}f(x) =
\frac{1}{2\pi\iu}\int_{\R}\int_{M_{n\eta}} F_\epsilon(x,\tau,z,\lambda) f(\tau)\,d\lambda\,d\tau.
\ee
By Fubini's theorem, the iterated integrals of the right-hand side are equal to an integral over $M_{n\eta}\times\R$, the integrand of which, $F_{\epsilon}(x,\tau,z,\lambda)f(\tau)$, is bounded by a function $g(\lambda,z)|f(\tau)|$ independent of $\epsilon$ and integrable on $M_{n\eta}\times\R$. Hence, the limit $\epsilon\to0^+$ can be interchanged with the integral and we obtain the integral over $M_{n\eta}\times\R$ of the function
\be
F(\lambda,\tau) = \lim_{\epsilon\to 0^+}F_{\epsilon}(x,\tau,z,\lambda)f(\tau) = \frac{ \sum_{j=1}^2\ef_j(x,\lambda)\overline{\aef_j(\tau,\lambda)}}{\lambda-z}f(\tau),
\ee
for $(\lambda,\tau)\in M_{n\eta}\times\R$. In the second equality of the above expression we used equation (\ref{eq:phidef}). Now we can use Fubini's theorem again to perform first the integral in the variable $\tau$. Taking afterwards the limit $\eta\to0^+$ we obtain the second term of the right-hand side of equation (\ref{eq:spectral_resolvent_L}).
\end{proof}

\begin{cor}
\label{thm:spectral_expansion}
For each $n\in\N$ and for each $f\in\cS$ we have two spectral expansions:
\begin{gather}
f(x) = B_{\oL}(N_n)f(x) + \int_{M_n}\sum_{j=1}^2\ef_j(x,\lambda)T_jf(\lambda)\,d\lambda, 
\label{eq:spectral_L_Mn}, \\[4pt]
f(x) = B^*_{\oL}(N_n)f(x) + \int_{M_n}\sum_{j=1}^2\aef_j(x,\lambda)S_jf(\lambda)\,d\lambda.
\label{eq:spectral_La_Mn}
\end{gather}
Moreover, the following analogues of the Parseval equality hold for $f,g\in\cS$:
\begin{gather}
(f,g) = \big(B_{\oL}(N_n)f,g\big)
+\int_{M_n}\sum_{j=1}^2T_jf(\lambda)\overline{S_jg(\lambda)}\,d\lambda, \label{eq:parseval1} \\
(f,g) = \big(B_{\oL}^*(N_n)f,g\big)
+\int_{M_n}\sum_{j=1}^2S_jf(\lambda)\overline{T_jg(\lambda)}\,d\lambda. \label{eq:parseval1_aL}
\end{gather}
\end{cor}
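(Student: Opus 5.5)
The plan is to get the expansions from Theorem \ref{thm:spectral_resolvent} by multiplying by $-z$ and letting $z\to\infty$ along a direction away from the spectrum, say $z=-t$ with $t\to+\infty$ (such $z$ lie in $\rho(\oL)\setminus(\lambda_0,\infty)$). Parseval will then follow by pairing the expansions with a second test function and using Fubini.

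\textbf{Step 1: the left-hand side and the projection term.} Fix $f\in\cS$ and multiply (\ref{eq:spectral_resolvent_L}) by $-z=t$. The left-hand side is $-zR_{\oL}(z)f = f - R_{\oL}(z)\oL f$. Since $\oL f\in\cH$, I will show $R_{\oL}(-t)\oL f\to 0$; this uses the first equation of (\ref{eq:resolvents}) together with the relative boundedness of $\oLth$ with respect to $\Lb$, which gives $\|R_{\oL}(-t)\|$ bounded for large $t$. Hence $-zR_{\oL}(z)f\to f$ in $\cH$. For the first term on the right, $B_{\oL}(N_n)f\in\dom(\oL)$ and $B_{\oL}$ commutes with $\oL$, so
\[
-zR_{\oL}(z)B_{\oL}(N_n)f = B_{\oL}(N_n)f - R_{\oL}(z)B_{\oL}(N_n)\oL f \to B_{\oL}(N_n)f.
\]
Better still, since $N_n$ is bounded, $\oL B_{\oL}(N_n)$ is bounded, so this term is $O(1/t)$.

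\textbf{Step 2: the integral term (main obstacle).} I must show that
\[
\int_{M_n}\frac{t}{\lambda+t}\sum_j\ef_j(x,\lambda)T_jf(\lambda)\,d\lambda\to\int_{M_n}\sum_j\ef_j(x,\lambda)T_jf(\lambda)\,d\lambda
\]
for each $x$. On $M_n$ the integrand is dominated by $c_n^2\lambda^{-3/8}\lambda^{-11/8}\|\oL f\|_{L^1}$, using (\ref{eq:bounds_phi}) and the bound $\lambda|T_jf|\le c_n\lambda^{-3/8}\|\oL f\|_1$. This majorant is integrable on $M_n\subset(\hp,\infty)$ because $\hp>0$. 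Also $0<t/(\lambda+t)\le 1$ for $\lambda>0$, so dominated convergence applies pointwise in $x$. Combining Steps 1 and 2 gives pointwise-a.e. equality along a subsequence, and hence (\ref{eq:spectral_L_Mn}). The same argument with $R_{\oLa}$, $B^*_{\oL}$, $\aef_j$, $S_j$ gives (\ref{eq:spectral_La_Mn}).

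\textbf{Step 3: Parseval.} Take the inner product of (\ref{eq:spectral_L_Mn}) with $g\in\cS$ and integrate in $x$. The function $\ef_j(x,\lambda)T_jf(\lambda)\overline{g(x)}$ is integrable on $\R\times M_n$, since $\ef_j$ is bounded there, $T_jf\in L^1(M_n)$, and $g\in L^1$. Fubini therefore lets me integrate first in $x$, and
\[
\int\ef_j(x,\lambda)\overline{g(x)}\,dx=\overline{S_jg(\lambda)}.
\]
This yields (\ref{eq:parseval1}). Pairing (\ref{eq:spectral_La_Mn}) with $g$ and using $\int\aef_j\bar g=\overline{T_jg}$ yields (\ref{eq:parseval1_aL}) in the same way.

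The delicate point is Step 2: the pointwise domination near $\hp$ and at infinity, which relies on the $L^1$ integration-by-parts bound for $T_jf$ holding uniformly on $M_n$. A secondary point in Step 1 is passing from $\cH$-convergence to the pointwise identity; both sides are continuous in $x$, since $f\in\cS$ and $B_{\oL}(N_n)f\in\dom(\oL)\subset H^2$, so a.e. equality upgrades to equality everywhere.
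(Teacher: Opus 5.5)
Your argument is correct, but it reaches the expansions by a different route from the paper's.

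The paper applies $\oL-z\id$ directly to (\ref{eq:spectral_resolvent_L}), and $\oLa-z\id$ to (\ref{eq:spectral_resolvent_La}). It differentiates four times under the integral sign and uses $\oL\ef_j(\cdot,\lambda)=\lambda\ef_j(\cdot,\lambda)$, so the factor $1/(\lambda-z)$ cancels exactly and no limit in $z$ is needed. The cost is that one needs control of $\partial_x^k\ef_j$, $k\le 3$, on $\R\times M_n$. The paper asserts these are bounded rather than proving it. This is delicate on the unbounded set $M_n$, since each $x$-derivative brings a factor of order $\nu(\lambda)\sim\lambda^{1/4}$, so one must borrow decay from $T_jf(\lambda)/(\lambda-z)$.

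Your Abel-type limit $-zR_{\oL}(z)\to\id$ as $z=-t\to-\infty$ needs no derivative estimates. On the integral side you use only (\ref{eq:bounds_phi}), $T_jf\in L^1(M_n)$, and $0<t/(\lambda+t)\le 1$ on $M_n\subset(\hp,\infty)$. On the operator side you need only the strong convergence $R_{\oL}(-t)\to 0$.

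On that last point, uniform boundedness of $\|R_{\oL}(-t)\|$ is not by itself enough to get $R_{\oL}(-t)\oL f\to0$. You can close this in either of two ways:
\begin{itemize}
\item Combine the bound with $R_{\oL}(-t)h=t^{-1}\big(h-R_{\oL}(-t)\oL h\big)\to 0$ for $h$ in the dense set $\dom(\oL)$. The same works for $\oLa$, since $\|R_{\oLa}(-t)\|=\|R_{\oL}(-t)\|$.
\item Read it off (\ref{eq:resolvent_L_bounded}) using $\Lb R_{\Lb}(-t)\to 0$ strongly.
\end{itemize}

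The price of your route is this extra limiting step and the upgrade from a.e.\ equality to equality everywhere, which you handle correctly by continuity. Your Parseval step is the same as the paper's.
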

\begin{proof}
Apply $\oL-z\id$ to equation (\ref{eq:spectral_resolvent_L}) and $\oLa-z\id$ to equation (\ref{eq:spectral_resolvent_La}), and notice that we can differentiate four times under the integral sign since,  for $j=1,2$, $T_jf,S_jf\in L^1(M_n)$, and $\ef_j$, $\aef_j$, and their derivatives with respect to their first variable up to third order, are bounded functions on $\R\times M_n$. Equalities (\ref{eq:parseval1}) and (\ref{eq:parseval1_aL}) are obtained, respectively, by multiplying equations (\ref{eq:spectral_L_Mn}) and (\ref{eq:spectral_La_Mn}) by $\overline{g(x)}$ and integrating in $x$ over $\R$, interchanging the order of integration in the second terms, which is clearly allowed. 
\end{proof}

\section{The bounded linear maps $U_n$ and $V_n$}

Let us introduce the Hilbert space $\cJ= L^2(M)\oplus L^2(M)$, and the projections $P_j:\cJ\to L^2(M)$, $j=1,2$, onto each of the two components of $\cJ$. The scalar product of $f,g\in\cJ$ is given by
\be
\big(f,g\big)_{\cJ} =  \int_{M}\,\sum_{j=1}^{2}P_jf(\lambda)\,\overline{P_jg(\lambda)}\,\,d\lambda,
\ee
and the norm of $f\in\cJ$ is defined by $\|f\|_{\cJ}^2=(f,f)_{\cJ}$. 

Let $\chi_b:M\to\R$ be the characteristic function of the set $b\subseteq M$. For any Borel set $b\subseteq\R$ we define the orthogonal projection $\hat{\chi}(b)$ in $\cJ$ by $P_j\hat{\chi}(b)f=\chi_{M\cap b}P_jf$, for $j=1,2$, and $f\in\cJ$. For $n\in\N$ and $b\in\cB$ we also define the orthogonal projection $\hat{\chi}_n(b)=\hat{\chi}(M_n)\hat{\chi}(b)$ in $\cJ$. Notice that $\hat{\chi}_n(b)\leq \hat{\chi}_m(b)$ if $n\leq m$ and $\slim_n \hat{\chi}_n(b) = \hat{\chi}(b)$. It is convenient to introduce the notation $\cJ_n=\ran\big(\hat{\chi}_n(\R)\big)$.
Finally, we define the selfadjoint linear operator $Q$ in $\cJ$ given by
\be
\begin{gathered}
\dom(Q)=\left\{f\in\cJ,\, \, \int_{M}\lambda^2\sum_{j=1}^2\big|P_jf(\lambda)\big|^2d\lambda<\infty\right\}, \\[2pt]
P_jQ f(\lambda) = \lambda P_jf(\lambda), \quad j=1,2, \quad \lambda\in M, \quad f\in\dom(Q).
\end{gathered}
\ee
Evidently $\sigma(Q)=M$ and $\hat{\chi}$ is the resolution of the identity for $Q$. For $n\in\N$ we denote by $Q_n$ the restriction of $Q$ to $\cJ_n\cap\dom(Q)$.
Clearly, $\sigma(Q_n)=M_n$.

For $f\in\cS$ and $n\in\N$ the functions on $M$ given by  $\chi_{M_n}T_jf$ and $\chi_{M_n}S_jf$ ($j=1,2$) belong to $L^2(M)$. Therefore, each of these functions define linear transformations from $\cS$ to $L^2(M)$. We are going to show that these transformations are bounded and, therefore, can be continuously extended from $\cS$ to $\cH$. This fact allows us to define the bounded linear maps $U_n$ and $V_n$ from $\cH$ to $\cJ$ of theorem \ref{thm:TnSn_bounded} below, which is one of the key points of this work.
\begin{thm}
\label{thm:TnSn_bounded}
For each $n\in\N$ there are two bounded linear maps, $U_n:\cH\to\cJ$ and $V_n:\cH\to\cJ$, such that, for any $f\in\cS$
\be
P_jU_n f=\chi_{M_n}T_jf, \quad  P_jV_n f=\chi_{M_n}S_jf, \quad j=1,2.
\ee
\end{thm}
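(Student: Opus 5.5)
The plan is to prove directly that, for each $n$, each $j$, and each $f\in\cS$, we have $\|\chi_{M_n}T_jf\|_{L^2(M)}\leq C_n\|f\|$ with $C_n$ independent of $f$. Since $\cS$ is dense in $\cH$, this lets us define $U_n$ (and, in the same way, $V_n$) as the continuous extension. The bound will not be derived from the Parseval-type identities of corollary \ref{thm:spectral_expansion}: those pair $T_jf$ with $S_jg$ and give no direct control of $|T_jf|^2$. Instead I will use the explicit structure of $\aef_j$ provided by lemma \ref{thm:sols_bounded} and theorem \ref{thm:sols_edo} (for the starred solutions). The argument reduces everything to Plancherel's theorem plus Cauchy--Schwarz against the decay of $r^*_k$ and $s^*_k$.

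First, split $f=f\chi_{[0,\infty)}+f\chi_{(-\infty,0)}$. On $\tau\geq 0$ write $\aef_j=\sum_{l=1}^3\alpha^*_{jl}\phip^*_l$, and on $\tau<0$ write $\aef_j=\sum_{k=2}^4\beta^*_{jk}\phim^*_k$. Inserting the form $\phip^*_l=e^{\iu\mu_l\tau}(1+r^*_l)$ turns $T_jf(\lambda)$ into a finite sum of three types of terms, each multiplied by $\overline{\alpha^*_{jl}(\lambda)}$ or $\overline{\beta^*_{jk}(\lambda)}$. The three types are as follows; the $\tau<0$ terms are treated in the same way using $s^*_k$ and the bounds on $\beta^*$.
\begin{itemize}
\item Oscillatory terms, $l=2,3$: $\int_0^\infty f(\tau)e^{\mp\iu\nu(\lambda)\tau}d\tau=\sqrt{2\pi}\,\widehat{f\chi_{[0,\infty)}}(\pm\nu(\lambda))$.
\item Exponential term, $l=1$: $\int_0^\infty f(\tau)e^{-\mur(\lambda)\tau}d\tau$.
\item Remainder terms: $\int_0^\infty f(\tau)e^{-\iu\bar\mu_l\tau}\,\overline{r^*_l(\tau,\lambda)}\,d\tau$.
\end{itemize}
I treat the two pieces $M_n\cap(\hp,\lambda_s]$ and $M_n\cap(\lambda_s,\infty)$ separately, because the solutions are only piecewise continuous at $\lambda_s$. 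On the bounded piece, the coefficients are bounded on $\overline{M_n}$ by lemma \ref{thm:sols_bounded}, and on the unbounded piece they satisfy (\ref{eq:bounds_alpha_beta}).

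For the oscillatory terms, change variables $\lambda=\pchar(\nu)$, so that $d\lambda=\pchar^\prime(\nu)\,d\nu$ with $\pchar^\prime(\nu)=4\nu^3+4\ha\nu$. On the bounded piece the Jacobian is bounded, and Plancherel gives a bound $c_n\|f\|^2$. On the unbounded piece, $|\alpha^*_{jl}|^2\leq c\lambda^{-3/4}\sim c\,\nu^{-3}$, and this cancels exactly the factor $\nu^3$ of the Jacobian. Plancherel then again yields $c\|f\|^2$. This exact cancellation is the crux of the argument, and it is the reason the exponents $3/8$ in (\ref{eq:bounds_phi}) and (\ref{eq:bounds_alpha_beta}) matter.

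For the exponential term, Cauchy--Schwarz gives $|\int_0^\infty f e^{-\mur\tau}|\leq(2\mur)^{-1/2}\|f\|\leq(2\mur_0)^{-1/2}\|f\|$. Combined with $|\alpha^*_{j1}|^2\leq c\lambda^{-5/4}$, which is integrable at infinity, and with boundedness on the bounded piece, this gives a bound $c_n\|f\|^2$.

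For the remainder terms, the factor $r^*_l$ depends on $\lambda$, so the remainder is not a Fourier transform. I therefore use the pointwise bound (\ref{eq:bounds_rs}) and Cauchy--Schwarz. This gives $|\int_0^\infty f e^{-\iu\bar\mu_l\tau}\overline{r^*_l}|\leq c\lambda^{-1/4}\|(1+\tau)^{-1}\|_{L^2(\R^+)}\|f\|$, where $|e^{-\iu\bar\mu_l\tau}|\leq 1$ for $\tau\geq0$ and $l\leq 3$. Multiplying by $|\alpha^*_{jl}|^2\leq c\lambda^{-3/4}$ produces an integrand $O(\lambda^{-5/4})$, which is integrable on $(\lambda_s,\infty)$ and trivially integrable on the bounded piece.

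Summing the finitely many contributions gives $\|\chi_{M_n}T_jf\|^2_{L^2(M)}\leq C_n\|f\|^2$. The argument for $S_jf$ is identical, with $\ef_j$, $\alpha_{jl}$, $\beta_{jk}$, $r_k$, $s_k$ in place of their starred versions. The step I expect to be the main obstacle is the large-$\lambda$ oscillatory term, where the Jacobian must be matched exactly against the decay of the coefficients. A secondary care point is checking that the bounds (\ref{eq:bounds_alpha_beta}) for the starred coefficients are available on all of $M_n\cap(\lambda_s,\infty)$, and not only asymptotically.
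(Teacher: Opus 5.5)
Your proposal is correct and follows essentially the same route as the paper. The paper splits $f$ at $x=0$ and expands the bounded solutions via lemma \ref{thm:sols_bounded} into oscillatory, exponentially decaying and remainder pieces; it handles the oscillatory ones by Plancherel after the change of variables $\lambda=\pchar(\xi)$ (using that $\pchar^\prime|\alpha_{jm}|^2$ is bounded), and the others by Cauchy--Schwarz. Your secondary concern is settled by lemma \ref{thm:sols_bounded} itself: its estimates (\ref{eq:bounds_alpha_beta}) hold for every $\lambda\geq\lambda_s$, and $[\lambda_s,\infty)\subseteq M_n$ for all $n$.
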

\begin{proof}
We proof the statement for $V_n$. The proof of the statement for $U_n$ is similar.
Along this proof we use the indices $j$, $k$, and $l$ and $m$, which take the values $1\leq j\leq 2$, $2\leq k\leq 4$, $1\leq l\leq 3$, and $2\leq m\leq 3$. We denote by $\|\cdot\|_{L_n^2}$ the norm in $L^2(M_n)$. 

Let $f\in\cS$. From the definition of $S_jf$ and lemma \ref{thm:sols_bounded}, equation (\ref{eq:sols_bounded_L}), we have
\be
S_jf = \sum_{k=2}^4 \beta_{jk}\big(g_k+u_k\big) + \sum_{l=1}^3 \alpha_{jl}\big(h_l+v_l\big),
\ee
where, for $\lambda\in M$,
\be
g_k(\lambda) = \int_{-\infty}^0 f(x)\,\overline{\exp(\iu\mu_kx)}dx, \;\;
u_k(\lambda) = \int_{-\infty}^0 f(x)\,\overline{\exp(\iu\mu_kx)\,s_k(x,\lambda)}\,dx, 
\ee
and,
\be
h_l(\lambda) = \int_0^\infty f(x)\,\overline{\exp(\iu\mu_lx)}dx, \quad
v_l(\lambda) = \int_0^\infty f(x)\,\overline{\exp(\iu\mu_lx)\,r_l(x,\lambda)}\,dx.
\ee
Recall $\mu_1=-\mu_4=\iu\mur$ and $\mu_2=-\mu_3=\nu$, where $\theta$ and $\nu$ are functions (of $\lambda$ in the previous expressions) on $[\hp,\infty)$ given by equation (\ref{eq:thetanu}). For fixed $\lambda\in M$ the functions $\exp(\iu\mu_lx)r_l(x,\lambda)$ are square integrable in $x$ over $(0,\infty)$, and the functions $\exp(\iu\mu_kx)s_k(x,\lambda)$ are square integrable in $x$ over $(-\infty,0)$. The Schwarz inequality gives
\be
|u_k(\lambda)| \leq c\lambda^{-1/4} \|f\|, \quad |v_l(\lambda)| \leq c\lambda^{-1/4} \|f\|, \quad \lambda\in M_n,
\ee
where we used the bounds on $r_k$ and $s_k$ of theorem \ref{thm:sols_edo}, possibly with a larger value of $c$. Taking into account the bounds on $\beta_{jk}$ and  $\alpha_{jl}$ of lemma \ref{thm:sols_bounded}, the restrictions of $\beta_{jk}u_k$ and $\alpha_{jl}v_l$ to $M_n$ are square integrable functions and satisfy the estimates
\be
\|\beta^*_{jk}u_k\|_{L_n^2}\leq c_n \|f\|, \quad \|\alpha^*_{jl}v_l\|_{L_n^2}\leq c_n \|f\|. 
\ee
The Schwarz inequality implies also that both $|g_4|$ and $|h_1|$ are bounded by $(2\theta)^{-1/2}\|f\|$. The bounds on $\beta_{j4}$ and  $\alpha_{j1}$ imply that the restrictions of  $\beta_{j4}g_4$ and $\alpha_{j1}h_1$ to $M_n$ are square integrable functions and satisfy the estimates
\be
\|\beta_{j4}g_4\|_{L_n^2}\leq c_n \|f\|, \quad \|\alpha_{j1}h_1\|_{L_n^2}\leq c_n \|f\|. 
\ee
To analyze $g_m$ and $h_m$ (recall $2\leq m\leq 3$) we split $f=f_++f_-$, where $f_+=f(x)$ if $x\geq0$ and $f_+(x)=0$ if $x<0$. Then $g_m(\lambda)=\tilde{f_-}\big(\mu_m(\lambda)\big)$ and $h_m(\lambda)=\tilde{f_+}\big(\mu_m(\lambda)\big)$, where $\tilde{f_+}$ and $\tilde{f_-}$ are the Fourier transforms of $f_+$ and $f_-$, respectively. For the restriction of $\alpha_{jm}h_m$ to $M_n$ we have,
\be
\|\alpha_{jm}h_m\|_{L_n^2}^2= \int_{M_n} \big|\alpha_{jm}(\lambda)\big|^2\big|\tilde{f_+}\big(\mu_m(\lambda)\big)\big|^2d\lambda.
\ee
Performing the change of variable $\lambda=\pchar(\xi)$ we obtain
\be
\|\alpha_{jm}h_m\|_{L_n^2}^2= \int_{\widehat{M}_n} \big|\alpha_{jm}\big(\pchar(\xi)\big)\big|^2\big|\tilde{f_+}\big((-1)^m\xi\big)\big|^2\pchar^\prime(\xi)d\xi,
\ee
where
$\widehat{M}_n$ is the image of $M_n$ under the inverse of $\pchar$.
The bounds on $\alpha_{jm}$ (lemma \ref{thm:sols_bounded}) imply that $\pchar^\prime(\xi)|\alpha_{jm}\big(\pchar(\xi)\big)|^2$ is bounded on $\widehat{M}_n$, so that
\be
\|\alpha_{jm}h_m\|_{L_n^2}^2\leq c_n\int_{\widehat{M}_n} \big|\tilde{f_+}\big((-1)^m\xi\big)\big|^2d\xi\leq c_n\|f_+\|^2\leq c_n\|f\|^2,
\ee
where we used the genuine Parseval equality. Thus, we get the estimate $\|\alpha_{jm}h_m\|_{L_n^2}\leq c_n\|f\|$. In a similar way we obtain $\|\beta_{jm}g_m\|_{L_n^2}\leq c_n\|f\|$. 
Putting all the pieces together, we find that the restriction of $S_jf$ to $M_n$ satisfies $\|S_jf\|_{L_n^2}\leq c_n\|f\|$.

Thus, we can define the bounded linear transformation $V_n:\cH\to\cJ$ by setting $P_jV_nf=\chi_{M_n}S_jf$ for $f\in\cS$, and the results proved above provide the bound $\|V_nf\|_{\cJ}\leq c_n\|f\|$. The image of $\cH\setminus\cS$ under $V_n$ is obtained by continuity.
\end{proof}

The analogues of Parseval's equality, equations  (\ref{eq:parseval1}) and (\ref{eq:parseval1_aL}), are extended by continuity to any two functions $f,g\in\cH$, and can be written in the concise ways
\be
(f,g) = \big(B_{\oL}(N_n)f,g\big) + \big(U_nf,V_ng\big)_{\cJ}
=\big(B^*_{\oL}(N_n)f,g\big)+\big(V_nf,U_ng\big)_{\cJ}.
\label{eq:parseval}
\ee

The next goal is to show that $\ran(U_n)$ and $\ran(V_n)$ are dense in $\cJ_n$. This is an immediate consequence, which we state in corollary \ref{thm:ran_UnVn_density}, of the technical results collected in lemmas \ref{thm:fgh}, \ref{thm:fgh_projections}, \ref{thm:ranUnVn}, and \ref{thm:lin_indep_phia}.

\begin{lem}
\label{thm:fgh}
Let $n\in\N$ and $f_1,f_2\in  L^1(M_n)\cap L^2(M_n)$, and define the functions $g$ and $h$ on $\R$ by
\be
g(x) = \int_{M_n}\sum_{j=1}^2\ef_j(x,\lambda)f_j(\lambda)\,d\lambda, \quad h(x) = \int_{M_n}\sum_{j=1}^2\aef_j(x,\lambda)f_j(\lambda)\,d\lambda, 
\label{eq:fgh}
\ee
for $x\in\R$. Then $g,h\in \cH$.
\end{lem}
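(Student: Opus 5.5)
We prove $g\in\cH$; the argument for $h$ is identical, with $\alpha^*_{jl},\beta^*_{jk},\phip^*_l,\phim^*_k,r^*_l,s^*_k$ in place of the unstarred quantities. The strategy dualizes the proof of theorem \ref{thm:TnSn_bounded}. Since the $f_j\in L^1(M_n)$ and the $\ef_j$ are bounded and continuous on $\R\times M_n$, $g$ is a well defined bounded continuous function. So it suffices to show that $g\chi_{[0,\infty)}$ and $g\chi_{(-\infty,0]}$ are square integrable. On $x\geq0$ we insert the first representation of lemma \ref{thm:sols_bounded}, $\ef_j=\sum_l\alpha_{jl}\phip_l$, and use the form (\ref{eq:sols_edo}) of $\phip_l$. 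This gives, for $x\geq0$,
\[
g(x)=\sum_{l=1}^3\int_{M_n}e^{\iu\mu_l x}\big(1+r_l(x,\lambda)\big)F_l(\lambda)\,d\lambda,\qquad F_l=\sum_{j=1}^2\alpha_{jl}f_j .
\]
Here $F_l\in L^1(M_n)\cap L^2(M_n)$, since the $\alpha_{jl}$ are bounded on $\overline{M_n}$ and decay for large $\lambda$ by (\ref{eq:bounds_alpha_beta}). On $x\leq0$ we do the same with the $\beta_{jk}$ and $\phim_k$, $k=2,3,4$.

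Next we treat the terms in order of difficulty, on $x\geq 0$. For the exponentially decaying term $l=1$, where $\mu_1=\iu\theta$ and $\theta\geq\theta_0>0$, we have $|e^{\iu\mu_1x}(1+r_1)|\leq c\,e^{-\theta_0 x}$. Hence that contribution is bounded by $c\,e^{-\theta_0x}\|F_1\|_{L^1}$, which is square integrable. For the remainder terms with $l=2,3$, the estimate (\ref{eq:bounds_rs}) gives $|r_l(x,\lambda)|\leq c(1+x)^{-1}$. These contributions are therefore bounded by $c(1+x)^{-1}\|F_l\|_{L^1}$, which is also in $L^2(0,\infty)$.

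The main obstacle is the oscillatory principal terms $\int_{M_n}e^{\pm\iu\nu(\lambda)x}F_l(\lambda)\,d\lambda$, which are not absolutely square integrable pointwise. For these we use the same change of variables as in theorem \ref{thm:TnSn_bounded}, namely $\lambda=\pchar(\xi)$ with $\xi=\nu(\lambda)\geq0$. This rewrites each such term as the (inverse) Fourier transform of $G_l(\xi)=\chi_{\widehat M_n}(\xi)F_l(\pchar(\xi))\pchar^\prime(\xi)$. By Plancherel its $L^2(\R)$ norm is $c\|G_l\|_{L^2}$, with
\[
\|G_l\|_{L^2}^2=\int_{M_n}|F_l(\lambda)|^2\,\pchar^\prime\big(\nu(\lambda)\big)\,d\lambda .
\]
The weight $\pchar^\prime(\nu)$ grows like $\lambda^{3/4}$. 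The decay $|\alpha_{jm}|\leq c\lambda^{-3/8}$ for $m=2,3$ from (\ref{eq:bounds_alpha_beta}) is exactly what makes $\pchar^\prime(\nu)|\alpha_{jm}|^2$ bounded on $M_n$, as already noted in that proof. Therefore $\|G_l\|_{L^2}\leq c_n\sum_j\|f_j\|_{L^2(M_n)}$. Near $\lambda=\hp$ we have $\pchar^\prime(\nu)\to0$, so no singularity appears there.

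Combining the three kinds of terms shows that $g$ is in $L^2(0,\infty)$. The half-line $x\leq0$ is handled symmetrically using the $\beta_{jk}$, with $\phim_4$ playing the role of the decaying solution. Together these give $g\in\cH$, with $\|g\|\leq c_n\big(\sum_j\|f_j\|_{L^1(M_n)}+\|f_j\|_{L^2(M_n)}\big)$.
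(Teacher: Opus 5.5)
Your proof is correct and follows essentially the same route as the paper. You split $g$ at $x=0$ and expand $\ef_j$ through the coefficients $\alpha_{jl}$ (resp.\ $\beta_{jk}$) of lemma \ref{thm:sols_bounded}; you bound the exponentially decaying term by $c\,e^{-\theta_0 x}$ and the $r_l$-remainders by $c(1+x)^{-1}$; and you handle the oscillatory terms by the substitution $\lambda=\pchar(\xi)$ and Plancherel, using that $\pchar^\prime(\nu)|\alpha_{jm}|^2$ is bounded on $M_n$. The explicit norm bound and the remark about $\pchar^\prime(\nu)$ near $\hp$ are harmless additions that the paper does not state.
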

\begin{proof}
Since $\ef_j$ and $\aef_j$ are continuous and bounded on $\R\times M_n$ and $f_j$ integrable over $M_n$, the functions $g$ and $h$ are well defined (and continuous) on $\R$. We prove $g\in\cH$. That $h\in\cH$ is proved similarly. 

Let us split $g=g_++g_-$, where $g_+(x)=g(x)$ if $x\geq 0$ and $g_+(x)=0$ if $x<0$. We shall proof $g_+,g_-\in\cH$. Using equation (\ref{eq:sols_bounded_L}) for $\ef_j$ we write $g_+=u_1+u_2+u_3$, where, for $1\leq k\leq 3$,
\be
u_k(x) = \int_{M_n}\exp(\iu\mu_k x)\Big(1+r_k(x,\lambda)\Big)\sum_{j=1}^2\alpha_{jk}(\lambda) f_j(\lambda)\,d\lambda, \quad x\geq 0,
\ee
and $u_k(x)=0$ if $x<0$. By lemma \ref{thm:sols_bounded} the functions $\alpha_{jk}$ are bounded on $M_n$. Since $\theta(\lambda)\geq\theta_0>0$ for all $\lambda\in M$, it is clear that $|u_1(x)|$ is bounded by a constant times $\exp(-\theta_0|x|)$ and hence $u_1\in\cH$. To analyze $u_2$ we notice that it has two contributions, one which involves the function $r_2$ and another one that does not. From the bound on $r_2$ (theorem \ref{thm:sols_edo}) it is clear that the contribution which involves $r_2$ is bounded by a constant times $1/(1+|x|)$ and thus it belongs to $\cH$. Making the change of variable $\lambda=\pchar(\xi)$ in the integral, the contribution to $u_2$ of the term that does not involve $r_2$ is given by
\be
\int_{\widehat{M}_n}\exp(\iu\xi x)\sum_{j=1}^2\alpha_{j2}\big(\pchar(\xi)\big) f_j\big(\pchar(\xi)\big)\pchar^\prime(\xi)\,d\xi, \quad x\geq 0, \label{eq:fourier}
\ee
where $\widehat{M}_n$ is the image of $M_n$ by the inverse of $\pchar$. We prove below that the above integral is the Fourier transform of a square integrable function on $\R$, and thus it belongs to $\cH$, what implies $u_2\in\cH$. To see that (\ref{eq:fourier}) is the Fourier transform of a square integrable function on $\R$, notice that, for $j=1,2$,
\be
\begin{gathered}
\int_{\widehat{M}_n}\!\big|\alpha_{j2}\big(\pchar(\xi)\big) f_j\big(\pchar(\xi)\big)\big|^2\pchar^{\prime\,2}(\xi)\,d\xi =\! \int_{M_n}\!\pchar^{\prime}\big(\nu(\lambda)\big)\big|\alpha_{j2}(\lambda)\big|^2\big| f_j(\lambda)\big|^2\,d\lambda \\[4pt]
\leq c  \int_{M_n}\big| f_j(\lambda)\big|^2\,d\lambda < \infty,
\end{gathered}
\ee
where we used that the bounds (\ref{eq:bounds_alpha_beta}) imply that $\pchar^\prime(\nu)|\alpha_{jk}|^2$ is bounded on $M_n$. The proof that $u_3\in\cH$ is similar. Thus, $g_+\in\cH$. In the same way we can proof that $g_-\in\cH$, and therefore we obtain $g\in\cH$. 
\end{proof}

\begin{lem}
\label{thm:fgh_projections}
Let $n$, $f_1$, $f_2$, $g$ and $h$ as in lemma \ref{thm:fgh}, and $G$ a bounded open subset of $\R$.
Then:
\begin{gather}
B_{\oL}(N_n)g=0, \quad B_{\oL}^*(N_n)h=0, \label{eq:BLNng} \\[4pt]
B_{\oL}(G) g(x) = \int_{M_n\cap G} \sum_{j=1}^2 \ef_j(x,\lambda)f_j(\lambda)\,d\lambda, \quad x\in\R,
\label{eq:f_integral_projection} \\[4pt]
B^*_{\oL}(G) h(x) = \int_{M_n\cap G} \sum_{j=1}^2 \aef_j(x,\lambda)f_j(\lambda)\,d\lambda,  \quad x\in\R.
\end{gather} 
\end{lem}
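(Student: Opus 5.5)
The plan is to compute the resolvent of $\oL$ on $g$ explicitly, feed it into the analogue of Stone's formula (\ref{eq:stone}) to get (\ref{eq:f_integral_projection}) for intervals, extend to open sets by countable additivity, and deduce (\ref{eq:BLNng}) by subtraction. The statements for $h$, $\oLa$ and $B^*_{\oL}$ follow in the same way, with $\aef_j$ replacing $\ef_j$ and $\oLa$ replacing $\oL$.

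\emph{Step 1: the resolvent on $g$.} Fix $z\in\C\setminus\R$. Put $f_j^z(\lambda)=f_j(\lambda)/(\lambda-z)$; since $|\lambda-z|\geq|\Imag z|$, these are still in $L^1(M_n)\cap L^2(M_n)$. Let $g_z$ be the function built from $f_j^z$ as in (\ref{eq:fgh}); by lemma \ref{thm:fgh}, $g_z\in\cH$. I claim $g_z=R_{\oL}(z)g$. To avoid checking directly that $g_z\in\dom(\oL)$, I would argue weakly. For $w\in\cS$, Fubini and integration by parts in $x$ give
\[
(g_z,(\oLa-\bar z\id)w)=(g,w).
\]
Integration by parts is legitimate because $w\in\cS$ and $\ef_j$ and its $x$-derivatives up to order three are bounded on $\R\times M_n$, as used in the proof of corollary \ref{thm:spectral_expansion}; the identity then follows from $\oL\ef_j(\cdot,\lambda)=\lambda\ef_j(\cdot,\lambda)$. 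Since $\cS$ is a core for $\oLa$ (this has to be checked; alternatively one uses $\oLa$ restricted to $\cS$ and $\oL^{**}=\oL$), we get $g_z\in\dom(\oL^{**})=\dom(\oL)$ and $(\oL-z\id)g_z=g$.

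\emph{Step 2: Stone's formula for intervals.} Let $a<b$. Step 1 gives
\[
\frac{1}{2\pi\iu}\int_a^b\big(R_{\oL}(\lambda+\iu\epsilon)-R_{\oL}(\lambda-\iu\epsilon)\big)g\,d\lambda\,(x)=\int_{M_n}\sum_{j}\ef_j(x,\mu)f_j(\mu)\,P_\epsilon(\mu)\,d\mu,
\]
where $P_\epsilon(\mu)$ is the $\lambda$-integral over $[a,b]$ of the Poisson kernel. The interchange of integrals is justified by Fubini, since $\ef_j$ is bounded and $f_j\in L^1(M_n)$. Moreover $P_\epsilon$ is bounded uniformly in $\epsilon$ and converges pointwise to $\frac12(\chi_{(a,b)}+\chi_{[a,b]})$. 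Dominated convergence then gives, for every $x$, the pointwise limit $\int_{M_n\cap(a,b)}\sum_j\ef_j f_j\,d\mu$; the endpoints have Lebesgue measure zero. By (\ref{eq:stone}) the same expression converges in $\cH$ to $\frac12\big(B_{\oL}([a,b])+B_{\oL}((a,b))\big)g$. Passing to an a.e.\ convergent subsequence, the two limits agree a.e. Applying this to $(a,b)$ and to $(a',b')\supset[a,b]$ shrinking to $[a,b]$, and using continuity from above of $B_{\oL}$, shows that $B_{\oL}$ of the endpoints annihilates $g$. This yields (\ref{eq:f_integral_projection}) for bounded open intervals.

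\emph{Step 3: open sets and $N_n$.} A bounded open set $G$ is a countable disjoint union of open intervals. The strong countable additivity of $B_{\oL}$ on bounded sets, together with dominated convergence on the right-hand side, gives (\ref{eq:f_integral_projection}) for $G$ (the series converges in $\cH$, and a subsequence converges a.e.). For (\ref{eq:BLNng}), choose a bounded open interval $J\supset N_n$. Then $J\setminus N_n$ is open, and
\[
B_{\oL}(N_n)=B_{\oL}(J)-B_{\oL}(J\setminus N_n).
\]
Since $M_n\cap N_n=\emptyset$, we have $M_n\cap J=M_n\cap(J\setminus N_n)$, so the two integrals coincide and $B_{\oL}(N_n)g=0$.

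\emph{Main obstacle.} I expect the hardest step to be the rigorous justification in Step 1 that $g_z\in\dom(\oL)$ with $(\oL-z\id)g_z=g$. I would try the weak/adjoint argument first. If the core property is troublesome, the fallback is to verify directly that $g_z\in H^2$ and $\oLo g_z\in H^2$ by differentiating under the integral sign. That fallback needs $L^2$ control of the derivatives, which can be obtained exactly as in lemma \ref{thm:fgh}, because derivatives only bring in extra factors $\mu_k$ while the $f_j^z$ decay like $1/\lambda$.
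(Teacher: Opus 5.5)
Your proposal is correct and follows essentially the paper's route: you compute the resolvent of $g$ by replacing $f_j$ with $f_j/(\lambda-z)$, apply the analogue of Stone's formula on intervals, use countable additivity for bounded open sets, and get $B_{\oL}(N_n)g=B_{\oL}(J)g-B_{\oL}(J\setminus N_n)g=0$. The differences are only in two details. First, you justify $R_{\oL}(z)g=g_z$ weakly, where the paper simply asserts it; the core property you flag does hold, since under the hypotheses on $\pot_i$ one has $\dom(\oLa)=H^4(\R)$, $\oLa$ is bounded from $H^4(\R)$ into $\cH$, and $\cS$ is dense in $H^4(\R)$. Second, you remove the endpoint projections using continuity from above of $B_{\oL}$, whereas the paper shows $B_{\oL}(\{\lambda_e\})g=0$ by enclosing each eigenvalue $\lambda_e$ in an open interval disjoint from $M_n$; your version is slightly more self-contained.
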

\begin{proof}
We prove the statements for $g$. The statements for $h$ are proved similarly.
We start with the proof of equation (\ref{eq:f_integral_projection}), which relies on the analogue of the Stone formula, equation (\ref{eq:stone}). For $z\in\rho(\oL)$ it is clear that $R_{\oL}(z)g$ is given by substituting $f_j(\lambda)$ by $f_j(\lambda)/(\lambda-z)$  in the expression that defines $g$.
Hence, for $\xi\in\R$ and $\epsilon>0$ we have
\be
R_{\oL}(\xi+\iu\epsilon)g(x)-R_{\oL}(\xi-\iu\epsilon)g(x)=\!\!\int_{M_n}\!\frac{2\iu\epsilon}{(\lambda-\xi)^2+\epsilon^2}\sum_{j=1}^2\ef_j(x,\lambda)f_j(\lambda)\,d\lambda.
\ee
Let $(a,b)$ be an open bounded interval. Inserting the above equation into equation (\ref{eq:stone}),
interchanging the order of integration in the iterated integral (Fubini's theorem is clearly applicable), and manipulating the resultant expression in a standard way we get
\be
\frac{1}{2}\Big(B_{\oL}\big((a,b)\big)+B_{\oL}\big([a,b]\big)\Big)g(x) = 
\int_{M_n\cap(a,b)}\sum_{j=1}^2\ef_j(x,\lambda)f_j(\lambda)\,d\lambda\,d\xi.
\label{eq:stone_g}
\ee
Relation (\ref{eq:stone_g}) shows that if $\lambda_e$ is an eigenvalue of $\oL$ then $B_{\oL}\big(\{\lambda_e\}\big)g=0$, since $\lambda_e$ is contained in an open interval $(\alpha,\beta)\subset N_n$, and then
\be
B_{\oL}\big(\{\lambda_e\}\big)g= B_{\oL}\big(\{\lambda_e\}\big)\frac{1}{2}\Big(B_{\oL}\big((\alpha,\beta)\big)+B_{\oL}\big([\alpha,\beta]\big)\Big)g = 0,
\ee
because $(\alpha,\beta)\cap M_n=\emptyset$. Hence, the left-hand side of equation (\ref{eq:stone_g}) can be replaced by $B_{\oL}\big((a,b)\big)g(x)$ and then equation  (\ref{eq:f_integral_projection}) holds if $G$ is a bounded open interval. Since any bounded open set is the union of an at most countable family of disjoint bounded open intervals, equation (\ref{eq:f_integral_projection}) holds for any open bounded set $G$.

Now, since $N_n\subset (\lambda_0,\lambda_s)$ we have 
$
B_{\oL}(N_n)g=B_{\oL}\big((\lambda_0,\lambda_s)\big)g-B_{\oL}\big((\lambda_0,\lambda_s)\cap N_n^c\big)g
$.
Applying equation (\ref{eq:f_integral_projection}) to the right-hand side of this equation we obtain $B_{\oL}(N_n)g=0$, which is the first of equations (\ref{eq:BLNng}).
\end{proof}

If $f\in\cS$ and $G$ is a bounded open subset of $M_n$, then the spectral resolutions (\ref{eq:spectral_L_Mn}) and (\ref{eq:spectral_La_Mn}), the definition of $U_n$ and $V_n$, and lemma \ref{thm:fgh_projections} imply
\begin{gather}
B_{\oL}(G)f(x) = \int_G \sum_{j=1}^2\ef_j(x,\lambda)P_jU_nf(\lambda)\,d\lambda, 
\\[4pt] 
B^*_{\oL}(G)f(x) = \int_G \sum_{j=1}^2\aef_j(x,\lambda)P_jV_nf(\lambda)\,d\lambda.
\end{gather}

\begin{lem}
\label{thm:ranUnVn}
Let $n\in\N$ and $f_1,f_2\in C(M_n)\cap L^1(M_n)\cap L^2(M_n)$, and define $g$ and $h$ as in lemma \ref{thm:fgh}. Then $U_ng =V_nh=f$, where, for $j=1,2$, $P_jf(\lambda)=f_j(\lambda)$ if $\lambda\in M_n$ and  $P_jf(\lambda)=0$ if $\lambda\in M\setminus M_n$.
\end{lem}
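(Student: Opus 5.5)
The plan is to compare, on bounded open subsets of $M_n$, two expressions for $B_{\oL}(G)g$ (and $B^*_{\oL}(G)h$), and then deduce $U_ng=f$ and $V_nh=f$ from the linear independence of the generalized eigenfunctions. The difficulty is that $g$ is not in $\cS$, so the representation of $B_{\oL}(G)$ through $U_n$ is first available only on $\cS$.

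\textbf{Step 1 (representation through $U_n$ on all of $\cH$).} Fix a bounded open set $G$ with $\overline{G}\subset M_n$ (or simply $G\subseteq M_n$). For $f\in\cS$ the paper gives
\[
B_{\oL}(G)f(x)=\int_G\sum_{j=1}^2\ef_j(x,\lambda)P_jU_nf(\lambda)\,d\lambda .
\]
Both sides are continuous in $f\in\cH$, pointwise in $x$:
\begin{itemize}
\item the right-hand side because $\ef_j$ is bounded on $\R\times M_n$, $G$ has finite measure, and $U_n$ is bounded by theorem \ref{thm:TnSn_bounded};
\item the left-hand side, at least after pairing with test functions, because $B_{\oL}(G)$ is bounded.
\end{itemize}
So I extend the identity to all $f\in\cH$, in particular to $f=g$, where $g\in\cH$ by lemma \ref{thm:fgh}.

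\textbf{Step 2 (comparison with the direct formula).} Lemma \ref{thm:fgh_projections} gives $B_{\oL}(G)g(x)=\int_G\sum_j\ef_j(x,\lambda)f_j(\lambda)\,d\lambda$. Setting $d_j=P_jU_ng-f_j$ on $M_n$, which lies in $L^2(M_n)$, we obtain
\[
\int_G\sum_{j=1}^2\ef_j(x,\lambda)d_j(\lambda)\,d\lambda=0
\]
for all $x$ and all bounded open $G\subseteq M_n$. The sets $G$ generate the Borel sets, so the vector measure $d_j\,d\lambda$ paired against $\ef_j(x,\cdot)$ vanishes. Hence, for every $x$ and a.e. $\lambda\in M_n$,
\[
\sum_j\ef_j(x,\lambda)d_j(\lambda)=0 .
\]
To make the ``for every $x$'' and ``a.e. $\lambda$'' quantifiers compatible, I restrict to a countable dense set of $x$ and then use continuity in $x$.

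\textbf{Step 3 (linear independence).} For each such $\lambda$, $\ef_1(\cdot,\lambda)$ and $\ef_2(\cdot,\lambda)$ are linearly independent functions, which is presumably the content of lemma \ref{thm:lin_indep_phia}. This forces $d_1(\lambda)=d_2(\lambda)=0$, so $P_jU_ng=f_j$ a.e. on $M_n$. Outside $M_n$, $P_jU_ng=0$ by construction, since $U_n$ takes values in $\chi_{M_n}$-supported functions and this property persists by continuity. Therefore $U_ng=f$. The argument for $V_nh=f$ is identical, using $B^*_{\oL}$, the $\aef_j$, and $V_n$.

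\textbf{Main obstacle.} I expect Step 1 to be the hardest, specifically justifying the extension of the pointwise formula from $\cS$ to $g$. If the pointwise approach is delicate, I would instead test against $\phi\in\cS$: pair both sides with $\phi$, use Fubini, and write $\int\ef_j(x,\lambda)\overline{\phi(x)}\,dx=\overline{S_j\phi(\lambda)}$. This turns the identity into
\[
\big(B_{\oL}(G)g,\phi\big)=\big(\hat\chi(G)U_ng,V_n\phi\big)_{\cJ}.
\]
The same computation with the direct formula gives $\big(\hat\chi(G)f,V_n\phi\big)_{\cJ}$. Then I need the density of $\{\hat\chi(G)V_n\phi\}$, which follows from the independence of the $\aef_j$ rather than of the $\ef_j$. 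The $f_j$ are continuous, so Lebesgue points cause no trouble. I would pick whichever of the two independence statements lemma \ref{thm:lin_indep_phia} provides.
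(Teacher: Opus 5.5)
Your main route is the paper's argument. The paper approximates $g$ by $g_m\in\cS$, applies $B_{\oL}((a,b))$ for intervals $(a,b)\subset M_n$, and passes to the limit using the boundedness of $B_{\oL}((a,b))$ and the $L^2$ convergence of $U_ng_m$; this is your Step~1, and it then concludes as in your Steps~2--3. For $U_ng=f$ your version is correct, and your countable-dense-set device handles the exchange of ``for all $x$'' and ``for a.a.\ $\lambda$'' that the paper passes over.

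The gap is in the $V_n$ half. It comes from misplacing the independence facts, which is exactly the delicate point of this lemma. The independence of $\ef_1(\cdot,\lambda),\ef_2(\cdot,\lambda)$ is not lemma~\ref{thm:lin_indep_phia}. It holds for every $\lambda\in M$ by construction: the $\ef_j$ are chosen linearly independent when (\ref{eq:phidef}) is set up in section~\ref{sec:green}. No such property is built into the $\aef_j$. Lemma~\ref{thm:lin_indep_phia} states that $\aef_1(\cdot,\lambda),\aef_2(\cdot,\lambda)$ are independent for a.a.\ $\lambda$, and its proof uses $U_ng=f$. So ``the argument for $V_nh=f$ is identical'' is not justified as written. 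Its last step needs an independence property of the $\aef_j$ that is not available a priori and can only be invoked after the $U_n$ half has been proved. The paper proceeds in exactly this order: $U_ng=f$, then lemma~\ref{thm:lin_indep_phia}, then $V_nh=f$.

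Your pairing variant has the roles of $U_n$ and $V_n$ reversed. Since $P_jV_n\phi=\chi_{M_n}S_j\phi$ and $S_j$ is built from $\ef_j$, the $U_n$ half needs that, for a.a.\ $\lambda$, the vectors $(S_1\phi(\lambda),S_2\phi(\lambda))$ with $\phi\in\cS$ span $\C^2$. That is the independence of the $\ef_j$, which is available. Appealing to the independence of the $\aef_j$ at that point would be circular.

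For $V_nh=f$ the same computation gives $(\hat{\chi}(G)(V_nh-f),U_n\phi)_{\cJ}=0$ for all $\phi\in\cS$, hence for all $\phi\in\cH$ since $U_n$ is bounded. The $U_n$ half shows that $\ran(U_n)$ is dense in $\cJ_n$, so $\hat{\chi}(G)(V_nh-f)=0$. Letting $G=M_n\cap(-k,k)$ with $k\to\infty$ then yields $V_nh=f$ without lemma~\ref{thm:lin_indep_phia}; this is in effect what the proof of that lemma does.
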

\begin{proof}
Consider a sequence $g_m\in\cS$, $m\in\N$, that converges to $g$ in $\cH$. Then $U_ng=\lim_m U_ng_m$. Using the spectral resolution (\ref{eq:spectral_L_Mn}) for $g_m$ and taking into account that $B_{\oL}(N_n)g=0$ (lemma \ref{thm:fgh}) we get $\lim_m v_m=0$, where 
\be
v_m(x) = \int_{M_n} \sum_{j=1}^2\ef_j(x,\lambda)\Big(f_j(\lambda)-P_jU_ng_m(\lambda)\Big)\,d\lambda, \quad x\in\R.
\ee 
Take a bounded interval $(a,b)\subset M_n$. The continuity of $B_{\oL}\big((a,b)\big)$ and lemma \ref{thm:fgh} imply
\be
\lim_{m\to\infty}\int_a^b \sum_{j=1}^2\ef_j(x,\lambda)\Big(f_j(\lambda)-P_jU_ng_m(\lambda)\Big)\,d\lambda=0, \quad  a.a.\,\, x\in\R.
\label{eq:fjmgj}
\ee
Since for each fixed $x$ the functions $\ef_j(x,\cdot)$ are square integrable on $(a,b)$, the convergence of $U_ng_m$ to $U_ng$ in $L^2(M_n)$ gives
\be
\int_a^b \sum_{j=1}^2\ef_j(x,\lambda)\Big(f_j(\lambda)-P_jU_ng(\lambda)\Big)\,d\lambda=0, \quad a.a.\,\, x\in\R.
\label{eq:fjmgj2}
\ee
Notice that $P_jU_ng$ is integrable on $(a,b)$ since it is square integrable. Therefore, the left-hand side of the above equation defines a continuous function of $x$ that has to vanish for all $x\in\R$ since it vanishes for \textit{a.a} $x\in\R$.
Since equation (\ref{eq:fjmgj2}) holds for every bounded interval contained in the open set $M_n$, the integrand has to vanish \textit{a.e.} in $M_n$ for each fixed $x\in\R$:
\be
\sum_{j=1}^2\ef_j(x,\lambda)\Big(f_j(\lambda)-P_jU_ng(\lambda)\Big)=0, \quad x\in\R, \quad a.a.\,\, \lambda\in M_n.
\ee
The linear independence of $\ef_1(\cdot,\lambda)$ and $\ef_2(\cdot,\lambda)$ requires $P_jU_ng(\lambda)=f_j(\lambda)$ for \textit{a.a.} $\lambda\in M_n$, that is, $U_ng=f$.

The proof of $U_ng=f$ is basically identical, but it requires that $\aef_1(\cdot,\lambda)$ and $\aef_2(\cdot,\lambda)$ be linearly independent for \textit{a.a.} $\lambda\in M_n$. This is the thesis of lemma \ref{thm:lin_indep_phia}, the proof of which uses the part of this lemma that has been proved.
\end{proof}

\begin{lem}
\label{thm:lin_indep_phia}
The functions $\aef_1(\cdot,\lambda)$ and $\aef_2(\cdot,\lambda)$ are linearly independent for almost all $\lambda\in M$.
\end{lem}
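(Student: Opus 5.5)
The plan is to argue by contradiction. We combine the Parseval relation (\ref{eq:parseval}) with the part of lemma \ref{thm:ranUnVn} already proved, namely $U_ng=f$, and with the linear independence of $\ef_1(\cdot,\lambda)$ and $\ef_2(\cdot,\lambda)$ for every $\lambda\in M$.

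Let $D\subseteq M$ be the set of $\lambda$ for which $\aef_1(\cdot,\lambda)$ and $\aef_2(\cdot,\lambda)$ are linearly dependent. By continuity of the $\aef_j$, $D$ is the set where all the $2\times2$ minors $\det[\aef_i(x_k,\lambda)]$ vanish, for $x_1,x_2$ ranging over a countable dense subset of $\R$. Hence $D$ is relatively closed in $M$, in particular Borel.

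Assume $|D|>0$. Since $M=\cup_n M_n$, there is $n$ with $|D\cap M_n|>0$, and we fix a bounded Borel set $K\subseteq D\cap M_n$ of positive measure. On $K$ we choose measurably a unit vector $w(\lambda)=(w_1,w_2)$ with $\sum_j w_j(\lambda)\aef_j(\cdot,\lambda)=0$. This can be done by taking normalized cofactors of the minors on the countably many Borel pieces where a given minor row is nonzero, and a fixed vector where both functions vanish. Then for $h\in\cS$,
\[
\sum_j \overline{w_j(\lambda)}\,S_jh(\lambda)=\int_\R h(\tau)\,\overline{\textstyle\sum_j w_j\aef_j(\tau,\lambda)}\,d\tau=0,\quad \lambda\in K .
\]
By continuity this gives $\sum_j\overline{w_j}\,P_jV_nh=0$ a.e. on $K$ for all $h\in\cH$.

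Next set $f_j=w_j\chi_K$. We want the function $g$ of lemma \ref{thm:fgh} to satisfy $U_ng=f$, but lemma \ref{thm:ranUnVn} requires continuous $f_j$. The step I expect to be the main technical obstacle is extending lemma \ref{thm:ranUnVn} (the $U_n$ part) to $f_j\in L^2(M_n)$ with support in a fixed bounded set $K'$. For this I would check that the proof of lemma \ref{thm:fgh} actually yields $\|g\|\le c\|f\|_{L^2}$. The $u_1$ term and the $r_k$ terms are controlled by $\|f\|_{L^1(K')}\le|K'|^{1/2}\|f\|_{L^2}$. The oscillatory terms are controlled by Plancherel, using the boundedness of $\pchar'(\nu)|\alpha_{jk}|^2$ on $M_n$. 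Then we approximate $f$ in $L^2$ by continuous functions supported in a bounded neighbourhood of $K$ inside $M_n$, which is open. Continuity of $f\mapsto g$ and of $U_n$ then gives $U_ng=f$.

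With this, lemma \ref{thm:fgh_projections} gives $B_{\oL}(N_n)g=0$. Then (\ref{eq:parseval}) yields, for every $h\in\cH$,
\[
(g,h)=(U_ng,V_nh)_{\cJ}=\int_K\sum_j w_j\,\overline{P_jV_nh}\,d\lambda=0 .
\]
Therefore $g=0$. On the other hand, lemma \ref{thm:fgh_projections} with $G$ any bounded open interval gives
\[
0=B_{\oL}(G)g(x)=\int_{G\cap K}\sum_j\ef_j(x,\lambda)w_j(\lambda)\,d\lambda
\]
for all $x$ (again via the $L^2$ extension). Hence $\sum_j\ef_j(x,\lambda)w_j(\lambda)=0$ for a.a. $\lambda\in K$, first for each $x$ in a countable dense set and then for all $x$ by continuity. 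This contradicts the linear independence of $\ef_1(\cdot,\lambda)$ and $\ef_2(\cdot,\lambda)$, since $|w|=1$ on $K$ and $|K|>0$. Thus $|D|=0$.
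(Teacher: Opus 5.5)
There is a genuine error in the step that carries your contradiction: you have swapped $S_j$ and $T_j$. In the paper, $T_jh(\lambda)=\int_\R h\,\overline{\aef_j(\cdot,\lambda)}$ and $S_jh(\lambda)=\int_\R h\,\overline{\ef_j(\cdot,\lambda)}$. So $U_n$ is built from the $\aef_j$ and $V_n$ from the $\ef_j$.

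Your choice of $w$, with $\sum_j w_j\aef_j(\cdot,\lambda)=0$ on $K$, therefore gives $\sum_j\overline{w_j}\,P_jU_nh=0$ a.e.\ on $K$ for all $h\in\cH$. It does not give the corresponding identity for $V_n$. The Parseval step $(g,h)=(U_ng,V_nh)_{\cJ}=\int_K\sum_j w_j\overline{P_jV_nh}\,d\lambda=0$ then fails. Indeed $\sum_j w_j\overline{S_jh(\lambda)}=\int_\R \overline{h}\,\sum_j w_j\ef_j(\cdot,\lambda)$, and its vanishing for all $h$ would mean $\sum_j w_j\ef_j(\cdot,\lambda)=0$ on $K$. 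That is exactly what the independence of $\ef_1,\ef_2$ excludes. With the correct definitions, this Parseval identity merely reproduces $(g,h)$ by Fubini, since $g=\int_K\sum_j w_j\ef_j\,d\lambda$. So you never obtain $g=0$, and the localization via lemma \ref{thm:fgh_projections} has nothing to act on.

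The repair is short, and it is essentially the paper's argument. Once you have the correct identity $\sum_j\overline{w_j}P_jU_nh=0$ a.e.\ on $K$ for every $h\in\cH$, apply it to your $g$, for which $U_ng=w\chi_K$. This gives $|w_1|^2+|w_2|^2=0$ a.e.\ on $K$, contradicting $|w|=1$ and $|K|>0$.

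The paper does not even need your $L^2$ extension of lemma \ref{thm:ranUnVn}. The identity says that $\ran(U_n)$ is orthogonal in $\cJ$ to every $\psi\,w\chi_K$ with $\psi\in L^2(K)$. By the already proved ($U_n$) part of lemma \ref{thm:ranUnVn}, $\ran(U_n)$ contains all $f$ whose components are continuous and lie in $L^1(M_n)\cap L^2(M_n)$. Hence $\sum_j\overline{w_j}f_j=0$ a.e.\ on $K$ for all such $f$. Taking $f=(\phi,0)$ and $f=(0,\phi)$ with $\phi>0$ gives $w=0$ a.e.\ on $K$.

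So the contradiction comes from the density of $\ran(U_n)$ in $\cJ_n$, not from the independence of the $\ef_j$ via Parseval. Your handling of the contradiction hypothesis ($|D|>0$, $D$ Borel, a measurable unit $w$ on $K$) is sound, and more careful than the paper's phrasing, which takes the negation to be dependence a.e.\ on $M$. Your $L^2$ bound for $g$ is also fine, but it becomes unnecessary once the argument is rewired as above.
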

\begin{proof}
Along this proof the index $j$ takes the values 1 and 2. We proceed by contradiction. If the thesis is false, there are functions $c_j$ on $M$, which vanish simultaneously on at most a set of zero Lebesgue measure, and such that $\sum_jc_j(\lambda)\aef_j(\cdot,\lambda)=0$ for \textit{a.a.} $\lambda\in M$. Therefore, for any $n\in\N$ and any $h\in\cS$ we have $\sum_jc_j(\lambda)P_jU_nh(\lambda)=0$ for \textit{a.a.} $\lambda\in M$. Take functions $f_j$ as in lemma \ref{thm:ranUnVn}, so that there is a sequence of functions $U_ng_m$,  $m\in\N$, where $g_m\in\cS$, such that $P_jU_ng_m(\lambda)\to f_j(\lambda)$ for \textit{a.a} $\lambda\in M_N$. We have $\sum_j c_j(\lambda)P_jU_ng_m(\lambda)=0$, and therefore $\sum_jc_j(\lambda)f_j(\lambda)=0$, for \textit{a.a.} $\lambda\in M_n$. This property has to hold for all continuous functions $f_1$ and $f_2$ on $M_n$ that are in $L^1(M_n)\cap L^2(M_n)$, and this is only possible if $c_1$ and $c_2$ vanish \textit{a.e.} in $M_n$. Since $n$ is arbitrary, $c_1$ and $c_2$ do vanish \textit{a.e.} in $M$, what contradicts our initial assumption. Hence, $\aef_1(\cdot,\lambda)$ and $\aef_2(\cdot,\lambda)$ have to be linearly independent for \textit{a.a} $\lambda\in M$. 
\end{proof}

The following corollary is an immediate consequence of lemma \ref{thm:ranUnVn}, since $C(M_n)\cap L^1(M_n)\cap L^2(M_n)$ is dense in $L^2(M_n)$.

\begin{cor}
\label{thm:ran_UnVn_density} 
$\ran(U_n)$ and $\ran(V_n)$ are dense in $\cJ_n$.
\end{cor}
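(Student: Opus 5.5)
The plan is to show that the closures of $\ran(U_n)$ and $\ran(V_n)$ both contain every element of $\cJ_n$, and that both ranges lie in $\cJ_n$. The second point is immediate. For $f\in\cS$ we have $P_jU_nf=\chi_{M_n}T_jf$ by theorem \ref{thm:TnSn_bounded}, so $U_nf\in\cJ_n$. Since $\cJ_n$ is closed and $U_n$ is bounded, continuity then gives $\ran(U_n)\subseteq\cJ_n$. The same argument applies to $V_n$.

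For density, I will define $\mathcal{D}_n$ as the set of $f\in\cJ$ such that each $P_jf$ vanishes on $M\setminus M_n$ and its restriction to $M_n$ lies in $C(M_n)\cap L^1(M_n)\cap L^2(M_n)$. Take any $f\in\mathcal{D}_n$ and set $f_j=P_jf|_{M_n}$. Lemma \ref{thm:fgh} then provides $g,h\in\cH$ built from the $f_j$. Lemma \ref{thm:ranUnVn} gives $U_ng=f$ and $V_nh=f$; for $V_n$ this step uses lemma \ref{thm:lin_indep_phia}. Hence $\mathcal{D}_n\subseteq\ran(U_n)\cap\ran(V_n)$.

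It remains to show that $\mathcal{D}_n$ is dense in $\cJ_n$, which reduces to density of $C(M_n)\cap L^1(M_n)\cap L^2(M_n)$ in $L^2(M_n)$. The set $M_n$ is open in $\R$, possibly unbounded, and $\ures:=C_c(M_n)$ is contained in this intersection. Since $M_n$ is an open subset of $\R$ with Lebesgue measure, $C_c(M_n)$ is dense in $L^2(M_n)$. Approximating each component separately and then extending by zero to $M$ produces elements of $\mathcal{D}_n$ arbitrarily close to any given element of $\cJ_n$.

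There is no real obstacle here, since the substantive work is done in lemmas \ref{thm:ranUnVn} and \ref{thm:lin_indep_phia}. The one point to check is that extension by zero from $M_n$ to $M$ is compatible with the definition of $\cJ_n=\ran(\hat{\chi}_n(\R))$. It is, because $\hat{\chi}_n(\R)$ is exactly multiplication of each component by $\chi_{M_n}$.
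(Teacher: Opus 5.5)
Your proposal is correct and follows the paper's argument: the paper obtains the corollary directly from lemma \ref{thm:ranUnVn} (which uses lemma \ref{thm:lin_indep_phia} in the $V_n$ case) and the density of $C(M_n)\cap L^1(M_n)\cap L^2(M_n)$ in $L^2(M_n)$. You additionally spell out the inclusions $\ran(U_n),\ran(V_n)\subseteq\cJ_n$ and obtain the density through $C_c(M_n)$. One cosmetic point: do not use the symbol $\ures$ for $C_c(M_n)$, because the paper uses $\ures_n$ for the restriction of $U_n$ to $\cU_n$.
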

Next theorem collects some important properties of $U_n$ and $V_n$ that will be used later.

\vfill\eject

\begin{thm}
\label{thm:UV}
For all $n\in\N$ the linear maps $U_n$ and $V_n$ satisfy:
\begin{enumerate}
\item $\iker(U_n)=\ran\big(B_{\oL}(N_n)\big)$ and $\iker(V_n)=\ran\big(B^*_{\oL}(N_n)\big)$.
\item 
$U_nR_{\oL}(z) = R_{Q_n}(z) U_n$ and $V_nR_{\oLa}(z) = R_{Q_n}(z)V_n$ for all $z\in\rho(\oL)$.
\item $U_n\oL f = Q_nU_nf$ if $f\in\dom(\oL)$, and $V_n\oLa f = Q_nV_nf$ if $f\in\dom(\oLa)$.
\end{enumerate}
\end{thm}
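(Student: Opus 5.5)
I will prove the three items in the order 2, 3, 1, since item 1 uses the intertwining relations and the Parseval identity (\ref{eq:parseval}).

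\emph{Item 2.} Fix $z\in\rho(\oL)$ with $z\notin(\lambda_0,\infty)$ first, and take $f\in\cS$. I would compute $T_j$ of $R_{\oL}(z)f$ through duality. For $\lambda\in M_n$, the function $\aef_j(\cdot,\lambda)$ is a bounded solution of $\oLa u=\lambda u$, so formally
\[
\int_\R R_{\oL}(z)f(\tau)\,\overline{\aef_j(\tau,\lambda)}\,d\tau=\frac{1}{\lambda-z}\,T_jf(\lambda).
\]
Since $\aef_j(\cdot,\lambda)$ is not in $\cH$, I would justify this using the Green function rather than an adjoint pairing. The function $R_{\oL}(z)f$ decays exponentially, because $|K(x,\tau,z)|\le k e^{-\delta|x-\tau|}$ and $f\in\cS$. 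Integration by parts four times against $\aef_j$ then gives $(\lambda-z)\,T_j(R_{\oL}(z)f)(\lambda)=T_j\big((\oL-z)R_{\oL}(z)f\big)(\lambda)=T_jf(\lambda)$; the boundary terms vanish by the decay. Here $T_j$ is applied directly by its integral formula, which converges. It remains to check that $P_jU_n R_{\oL}(z)f$ agrees with this integral formula even though $R_{\oL}(z)f\notin\cS$. I would do this by approximating $R_{\oL}(z)f$ by Schwartz functions in the weighted norm $L^1\cap L^2$, using boundedness of $\aef_j$ on $\R\times M_n$. This yields $U_nR_{\oL}(z)f=R_{Q_n}(z)U_nf$ on $\cS$, and by density and boundedness on $\cH$. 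To extend to all $z\in\rho(\oL)$, I would use analyticity: both sides are analytic in $z$ on the connected set $\rho(\oL)=\C\setminus\sigma(\oL)$. This set is connected because $\sigma(\oL)\subset\R$ is bounded below and has no interior gaps that disconnect $\C$. The argument for $V_n$ and $\oLa$ is identical, using $\ef_j$ and $S_j$.

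\emph{Item 3.} This follows from item 2. For $f\in\dom(\oL)$ and $z\in\rho(\oL)$, write $f=R_{\oL}(z)g$ with $g=(\oL-z)f$. Then $U_nf=R_{Q_n}(z)U_ng$, so $U_nf\in\dom(Q_n)$ and $(Q_n-z)U_nf=U_ng=U_n(\oL-z)f$.

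\emph{Item 1.} First I show $\ran B_{\oL}(N_n)\subseteq\iker U_n$. The projection $B_{\oL}(N_n)$ commutes with resolvents, so for $f\in\cS$ I can use the spectral expansion (\ref{eq:spectral_L_Mn}) applied to $B_{\oL}(N_n)f$. More directly, I would show $U_nB_{\oL}(G)=\hat\chi(G)U_n$ for bounded open $G$. I would take the Stone formula (\ref{eq:stone}), apply $U_n$ using item 2, and compare with Stone's formula for $Q_n$. Since $B_{\oL}$ and $\hat\chi$ are both countably additive strongly, this extends to all bounded Borel sets, and with $G=N_n$ it gives $U_nB_{\oL}(N_n)=\hat\chi(N_n)U_n=0$ because $N_n\cap M_n=\emptyset$.

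For the reverse inclusion, take $f\in\iker U_n$. By (\ref{eq:parseval}), $(f,g)=(B_{\oL}(N_n)f,g)$ for all $g\in\cH$, so $f=B_{\oL}(N_n)f\in\ran B_{\oL}(N_n)$. The same argument gives the statement for $V_n$ and $B^*_{\oL}$.

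The main obstacle I expect is the justification step in item 2: applying $T_j$ via its integral formula to the non-Schwartz function $R_{\oL}(z)f$, and checking that this coincides with the continuous extension $U_n$. This requires controlling the growth in $\lambda$ of the integrals on $M_n$, and I would use the bounds (\ref{eq:bounds_phi}) together with the $1/(\lambda-z)$ factor.
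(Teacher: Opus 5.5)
Your proof is correct, but it runs in the opposite direction from the paper's.

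\textbf{The paper's route.} The paper proves item 1 first. The inclusion $\iker(U_n)\subseteq\ran\big(B_{\oL}(N_n)\big)$ comes from (\ref{eq:parseval}) exactly as you do it. For the converse, if $f=B_{\oL}(N_n)f$ then (\ref{eq:parseval}) gives $(U_nf,V_ng)_{\cJ}=0$ for all $g$, and the paper concludes using the density of $\ran(V_n)$ in $\cJ_n$ (Corollary \ref{thm:ran_UnVn_density}). Item 2 is then obtained on $\cS$ by applying $U_n$ to the resolvent representation (\ref{eq:spectral_resolvent_L}). The term $R_{\oL}(z)B_{\oL}(N_n)f=B_{\oL}(N_n)R_{\oL}(z)f$ is killed by (\ref{eq:BL_R}) and item 1. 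The integral term is identified by Lemma \ref{thm:ranUnVn} with $f_j=T_jf/(\lambda-z)$.

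\textbf{Your route.} You derive item 2 directly from the fact that $\aef_j(\cdot,\lambda)$ solves $\oLa u=\lambda u$. This uses integration by parts, an $L^1\cap L^2$ approximation to identify $U_n$ off $\cS$, and analytic continuation in $z$. You then obtain the missing inclusion of item 1 from the intertwining $U_nB_{\oL}(b)=\hat{\chi}_n(b)U_n$, which you get by comparing the Stone formulas for $\oL$ and $Q_n$.

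\textbf{Comparison.} Your argument does not need Lemmas \ref{thm:ranUnVn} and \ref{thm:lin_indep_phia}, nor the density corollary. The intertwining you prove is also stronger than item 1 requires: once $\ures_n^{-1}$ is available, it yields Lemma \ref{thm:BLUn} immediately. The paper's route is shorter because it reuses the expansion machinery already built. The density result is needed later anyway, for Theorem \ref{thm:UVrestricted}.

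\textbf{Two details to fix.}
\begin{itemize}
\item $R_{\oL}(z)f$ does not decay exponentially for a general $f\in\cS$. The bound $|K(x,\tau,z)|\le k\,e^{-\delta|x-\tau|}$ only shows that $R_{\oL}(z)f$ and its first three derivatives decay faster than any power of $|x|$. That is still all you need, both for the vanishing boundary terms and for $R_{\oL}(z)f\in L^1\cap L^2$.
\item The Stone formula produces $\frac12\big(B_{\oL}((a,b))+B_{\oL}([a,b])\big)$, so possible eigenvalue atoms at the endpoints must be handled. One way is to use only intervals whose endpoints are not eigenvalues. These form a $\pi$-system generating the Borel sets, and a Dynkin-class argument then extends the identity to all Borel subsets of a bounded interval containing $N_n$.
\end{itemize}
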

\begin{proof}
Point 1 follows almost immediately from the analogue of the Parseval equality (\ref{eq:parseval}) and the density of $\ran(V_n)$ in $\cJ_n$.
To prove the first relation of point 2 we notice it holds for $f\in\cS$ due to equation (\ref{eq:spectral_resolvent_L}) of lemma \ref{thm:spectral_resolvent}, equation (\ref{eq:BL_R}),  point 1 of this lemma, and lemma \ref{thm:ranUnVn}. Then, the relation is extended to $\cH$ by continuity. The second relation of point 2 follows in a similar way.
The first relation of point 3 is obtained easily from point 2. 
\end{proof}

The projections $B_{\oL}(N_n)$ and $B^*_{\oL}(N_n)$ induce decompositions of $\cH$ as a direct sum of two closed subspaces:
\be
\cH = \ran\big(B_{\oL}(N_n)\big) \oplus \cU_n = \ran\big(B^*_{\oL}(N_n)\big) \oplus \cV_n,
\ee
where we introduce the notation $\cU_n=\iker\big(B_{\oL}(N_n)\big)$ and $\cV_n=\iker\big(B^*_{\oL}(N_n)\big)$. Let $\ures_n$ be the restriction of $U_n$ to $\cU_n$ and $\vres_n$ the restriction of $V_n$ to $\cV_n$.
 
\begin{thm}
\label{thm:UVrestricted}
For each $n\in\N$, $\ures_n$ is a continuous linear bijection from $\cU_n$ onto $\cJ_n$ which has a continuous inverse, and $\vres_n$ is a continuous linear bijection from $\cV_n$ onto $\cJ_n$ which has a continuous inverse.
\end{thm}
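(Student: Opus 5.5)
Continuity of $\ures_n$ and $\vres_n$ is inherited from theorem \ref{thm:TnSn_bounded}, and injectivity follows from point 1 of theorem \ref{thm:UV}. Indeed, $\iker(U_n)=\ran\big(B_{\oL}(N_n)\big)$, and this subspace meets $\cU_n=\iker\big(B_{\oL}(N_n)\big)$ only in $\{0\}$. The same argument applies to $V_n$, using $\iker(V_n)=\ran\big(B^*_{\oL}(N_n)\big)$ and $\cV_n$. That the ranges lie in $\cJ_n$ is immediate from the definition $P_jU_nf=\chi_{M_n}T_jf$ on $\cS$, extended by continuity. By the open mapping theorem, the whole statement therefore reduces to proving surjectivity onto $\cJ_n$. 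Equivalently, it suffices to prove a lower bound $\|U_nf\|_{\cJ}\geq c\|f\|$ for $f\in\cU_n$, since this bound makes the range closed, and a closed range is all of $\cJ_n$ by corollary \ref{thm:ran_UnVn_density}.

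To get the lower bound I will use the Parseval analogue (\ref{eq:parseval}). Let $f\in\cU_n$ and $g\in\cH$. Since $B_{\oL}(N_n)f=0$, the first form of (\ref{eq:parseval}) gives $(f,g)=(U_nf,V_ng)_{\cJ}$. Choosing $g=f$ only yields $\|f\|^2\leq\|U_nf\|_{\cJ}\|V_n\|\,\|f\|$. This gives
\begin{equation*}
\|f\|\leq \|V_n\|\,\|U_nf\|_{\cJ}, \quad f\in\cU_n,
\end{equation*}
which is exactly the needed estimate. The analogous computation with the second form of (\ref{eq:parseval}) and $f\in\cV_n$ gives $(f,g)=(V_nf,U_ng)_{\cJ}$, hence $\|f\|\leq\|U_n\|\,\|V_nf\|_{\cJ}$.

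With the lower bound in hand, I will show that $\ran(\ures_n)$ is closed: if $U_nf_m\to w$ with $f_m\in\cU_n$, then $(f_m)$ is Cauchy, so it converges to some $f\in\cU_n$, because $\cU_n$ is closed. By continuity, $U_nf=w$. I also need $\ran(\ures_n)=\ran(U_n)$, and this holds because $U_n=U_n\big(\id-B_{\oL}(N_n)\big)$ by point 1 of theorem \ref{thm:UV}. Together with corollary \ref{thm:ran_UnVn_density}, these facts give $\ran(\ures_n)=\cJ_n$. The inverse is bounded by $\|V_n\|$, explicitly, with no need for the open mapping theorem.

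The only delicate step I foresee is justifying (\ref{eq:parseval}) for arbitrary $f,g\in\cH$ rather than only for Schwartz functions. This extension follows by density of $\cS$ together with the boundedness of $B_{\oL}(N_n)$, $U_n$ and $V_n$, which the paper already asserts. Everything else is soft functional analysis.
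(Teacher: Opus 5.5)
Your proposal is correct and follows the paper's proof. The paper also applies the Parseval analogue (\ref{eq:parseval}) with $B_{\oL}(N_n)f=0$ to get $\|f\|\leq\|V_n\|\,\|U_nf\|_{\cJ}$ on $\cU_n$, concludes that $\ures_n$ has a bounded inverse and closed range, uses corollary \ref{thm:ran_UnVn_density} to identify that range with $\cJ_n$, and treats $\vres_n$ symmetrically. The differences are cosmetic: you justify injectivity and $\ran(\ures_n)=\ran(U_n)$ through point 1 of theorem \ref{thm:UV}, which the paper simply calls clear, and injectivity also follows directly from your lower bound.
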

\begin{proof}
Clearly, $\ures_n$ is injective and bounded, and $\ran(\ures_n)=\ran(U_n)$. Let us show that the inverse of $\ures_n$ is bounded.
If $f\in\ran(U_n)$, then there is a unique $g\in\cU_n$ such that $U_ng=f$, and $g=\ures_n^{-1}f$. Since $g\in\cU_n$ implies $B_{\oL}(N_n)g=0$, the Parseval equality (\ref{eq:parseval}) gives $\|g\|^2=(V_ng,U_ng)_{\cJ}$, and since $V_n$ is bounded we get $\|g\|\leq\|V_n\|\|U_ng\|_{\cJ}$, that is, $\|\ures_n^{-1}f\|\leq\|V_n\|\|f\|_{\cJ}$. Since $\ures_n$ has a bounded inverse, its range of is closed, and then $\ran(\ures_n)=\cJ_n$, because, by corollary \ref{thm:ran_UnVn_density}, $\ran(\ures_n)$ is dense in $\cJ_n$. This proves that $\ures_n$ is a bounded bijection of $\cU_n$ onto $\cJ_n$ which has a bounded inverse. The claim for $\vres_n$ is proved similarly.
\end{proof}

An important byproduct of theorem \ref{thm:UVrestricted} is emphasized in the following corollary.

\begin{cor}
$\ran(U_n)=\ran(V_n)=\cJ_n$ for all $n\in\N$.
\end{cor}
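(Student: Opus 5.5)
The corollary should follow directly from theorem \ref{thm:UVrestricted}, together with the fact that $U_n$ vanishes on the complement of $\cU_n$. I will argue for $U_n$; the argument for $V_n$ is the same with $B^*_{\oL}(N_n)$, $\cV_n$ and $\vres_n$ in place of $B_{\oL}(N_n)$, $\cU_n$ and $\ures_n$.

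First I show $\ran(U_n)\subseteq\cJ_n$. For $f\in\cS$ we have $P_jU_nf=\chi_{M_n}T_jf$, which vanishes on $M\setminus M_n$, so $\hat{\chi}_n(\R)U_nf=U_nf$. Since $U_n$ is bounded, $\cS$ is dense in $\cH$, and $\cJ_n$ is closed, this gives $U_n(\cH)\subseteq\cJ_n$.

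Next I show $\cJ_n\subseteq\ran(U_n)$. By theorem \ref{thm:UVrestricted}, $\ures_n$ maps $\cU_n$ onto $\cJ_n$. Since $\ures_n$ is a restriction of $U_n$, we get $\cJ_n=\ran(\ures_n)\subseteq\ran(U_n)$. Together with the previous inclusion this yields $\ran(U_n)=\cJ_n$.

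As a consistency check, the decomposition $\cH=\ran(B_{\oL}(N_n))\oplus\cU_n$ and point 1 of theorem \ref{thm:UV}, which says $\iker(U_n)=\ran(B_{\oL}(N_n))$, show that $\ran(U_n)=U_n(\cU_n)=\ran(\ures_n)$. This gives the equality directly as well.

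There is no real obstacle here. The only point needing a little care is the first inclusion, which holds by the explicit definition of $U_n$ on $\cS$ followed by continuity.
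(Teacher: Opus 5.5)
Your proposal is correct and follows the paper's route. The corollary is read off from theorem \ref{thm:UVrestricted}, whose proof notes $\ran(\ures_n)=\ran(U_n)$ (your ``consistency check'', via $\iker(U_n)=\ran(B_{\oL}(N_n))$) and establishes $\ran(\ures_n)=\cJ_n$. Your explicit check that $\ran(U_n)\subseteq\cJ_n$, by the definition of $U_n$ on $\cS$ plus continuity, just makes precise a point the paper takes for granted.
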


The extension to $\cH$ of the spectral expansions (\ref{eq:spectral_L_Mn}) and (\ref{eq:spectral_La_Mn}) can be written concisely as
\be
\id = B_{\oL}(N_n) + \ures_n^{-1}U_n = B^*_{\oL}(N_n) + \vres_n^{-1}V_n.
\label{eq:spectral_UnVn}
\ee

\section{Spectral resolution of the identity for $\oL$}

The results of the previous sections allow us to show that there are spectral resolutions of the identity for $\oL$ and $\oLa$, and that both $\oL$ and $\oLa$ are spectral operators of scalar type.
We start noticing that for any $n\in\N$ and $b\in\cB$ the linear operators $\ures_n^{-1}\hat{\chi}_n(b)U_n$ and $\vres_n^{-1}\hat{\chi}_n(b)V_n$ are bounded projections on $\cH$. From lemma \ref{thm:fgh_projections} we obtain
\begin{gather}
\ures_n^{-1}\hat{\chi}_n(b)U_nf(x) = \int_{M_n\cap\,b} \sum_{j=1}^2\ef_j(x,\lambda) P_jU_nf(\lambda)\,d\lambda, \label{eq:Uspectral} \\[2pt]
\vres_n^{-1}\hat{\chi}_n(b)V_nf(x) = \int_{M_n\cap\,b} \sum_{j=1}^2\aef_j(x,\lambda) P_jV_nf(\lambda)\,d\lambda,
\end{gather}
for any $f\in\cS$ and $x\in\R$.

\begin{lem}
\label{thm:BLUn}
If $n\in\N$, $b_1\in\cB_b$, and $b_2\in\cB$, then
\begin{gather}
B_{\oL}(b_1)\ures_n^{-1} \hat{\chi}_n(b_2)U_n = \ures_n^{-1}\hat{\chi}_n(b_1\cap b_2)U_n,  \label{eq:projections_on_cU}
\\[4pt] 
B_{\oL}^*(b_1)\vres_n^{-1} \hat{\chi}_n(b_2)V_n = \vres_n^{-1}\hat{\chi}_n(b_1\cap b_2)V_n.
\end{gather}
\end{lem}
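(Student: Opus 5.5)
The plan is to prove the first identity and obtain the second by the same argument with $\oLa$, $\aef_j$, $V_n$, $\vres_n$ in place of $\oL$, $\ef_j$, $U_n$, $\ures_n$. Both sides are bounded operators on $\cH$. For fixed $b_1$ the left-hand side is bounded by $\|B_{\oL}(b_1)\|\,\|\ures_n^{-1}\|\,\|U_n\|$, and the right-hand side is bounded uniformly in $b_2$. It therefore suffices to check the identity on $f\in\cS$ and to restrict $b_1$ and $b_2$ to suitable generating classes of sets, extending afterwards by a monotone class argument.

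\emph{Step 1: $b_1$ a bounded open set $G$.} Fix $f\in\cS$ and put $f_j=\chi_{M_n\cap b_2}P_jU_nf$. The first hurdle is that lemma \ref{thm:fgh_projections} requires $f_j\in L^1(M_n)\cap L^2(M_n)$, and integrability at infinity is not yet known. I will first treat bounded $b_2$; then $f_j\in L^1$, since $P_jU_nf$ is bounded on $M_n$ (see the estimate $\lambda|T_jf(\lambda)|\le c_n\lambda^{-3/8}\|\oL f\|_{L^1}$ preceding Theorem \ref{thm:spectral_resolvent}). The function $g$ of lemma \ref{thm:fgh} built from these $f_j$ is, by (\ref{eq:Uspectral}), exactly $\ures_n^{-1}\hat{\chi}_n(b_2)U_nf$. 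Lemma \ref{thm:fgh_projections} then gives
\[
B_{\oL}(G)g(x)=\int_{M_n\cap G}\sum_{j=1}^2\ef_j(x,\lambda)\chi_{b_2}(\lambda)P_jU_nf(\lambda)\,d\lambda
=\int_{M_n\cap G\cap b_2}\sum_{j=1}^2\ef_j(x,\lambda)P_jU_nf(\lambda)\,d\lambda ,
\]
and by (\ref{eq:Uspectral}) again this equals $\ures_n^{-1}\hat{\chi}_n(G\cap b_2)U_nf(x)$. The identity extends from $\cS$ to all of $\cH$ by boundedness.

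\emph{Step 2: unbounded $b_2$.} Write $b_2=\bigcup_k(b_2\cap(-k,k))$. Since $\hat{\chi}_n$ is strongly countably additive, $\hat{\chi}_n(b_2\cap(-k,k))\to\hat{\chi}_n(b_2)$ strongly, and the same holds with $G\cap b_2$ in place of $b_2$. The operators $U_n$, $\ures_n^{-1}$ and $B_{\oL}(G)$ are bounded, so both sides converge strongly and the identity passes to the limit.

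\emph{Step 3: general bounded Borel $b_1$.} This is the step I expect to be the main obstacle, because it needs some regularity of $B_{\oL}$ and has to be handled with care. Fix $b_2$ and a bounded open interval $J\supset b_1$. Let $\mathcal{D}$ be the collection of Borel sets $e\subseteq J$ for which the identity holds with $b_1=e$. By Step 1, $\mathcal{D}$ contains all open subsets of $J$. It is closed under relative complements in $J$: $B_{\oL}(J\setminus e)=B_{\oL}(J)-B_{\oL}(e)$, and likewise $\hat{\chi}_n((J\setminus e)\cap b_2)=\hat{\chi}_n(J\cap b_2)-\hat{\chi}_n(e\cap b_2)$. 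It is closed under countable disjoint unions, because $B_{\oL}$ is strongly countably additive on subsets of a bounded set (shown in section \ref{sec:projections_bounded}) and $\hat{\chi}_n$ is as well. The open subsets of $J$ form a $\pi$-system, so Dynkin's $\pi$–$\lambda$ theorem gives that $\mathcal{D}$ contains every Borel subset of $J$, which proves the identity for $b_1$.
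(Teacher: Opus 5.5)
Your proof is correct. Step 1 is exactly the paper's treatment of open $b_1$: formula (\ref{eq:Uspectral}) combined with lemma \ref{thm:fgh_projections} on $\cS$, then extension by continuity.

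The genuine difference is in Step 3. The paper handles a general bounded Borel $b_1$ by an $\epsilon$-approximation from outside:
\begin{itemize}
\item It picks open sets $G_2\supseteq b_1$ and $G_3\supseteq G_2\setminus b_1$ whose Lebesgue measures nearly match those of $b_1$ and $G_2\setminus b_1$.
\item It writes the defect as $\ures_n^{-1}\hat{\chi}_n\big((G_2\setminus b_1)\cap b_2\big)U_nf-B_{\oL}(G_2\setminus b_1)g$.
\item It controls the second term through $B_{\oL}(G_2\setminus b_1)=B_{\oL}(G_2\setminus b_1)B_{\oL}(G_3)$, the uniform bound on $\|B_{\oL}(e)\|$ for $e$ inside a fixed bounded open set, and the already proved open case for $G_3$.
\item Smallness then comes from outer regularity of Lebesgue measure and the absolute continuity of $e\mapsto\|\hat{\chi}_n(e)U_nf\|_{\cJ}$ with respect to Lebesgue measure.
\end{itemize}
Your Dynkin argument replaces all of this with a uniqueness principle. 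Both sides are strongly countably additive in $b_1$ on Borel subsets of $J$: this uses finite additivity for relative complements, the countable additivity of $B_{\oL}$ on subsets of a bounded set from section \ref{sec:projections_bounded}, and boundedness of $\ures_n^{-1}$. They also agree on the $\pi$-system of open subsets of $J$. Your route is more structural: it needs neither regularity of Lebesgue measure, nor the Lebesgue absolute continuity of the $M_n$-part, nor an explicit appeal to the uniform bound on $\|B_{\oL}(e)\|$. The paper's route is a direct quantitative estimate that avoids the $\pi$--$\lambda$ machinery.

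A minor remark: your Step 2 is unnecessary. The estimate you cite actually gives $|T_jf(\lambda)|\le c_n\lambda^{-11/8}\|\oL f\|_{L^1}$ on $M_n$ (recall $\lambda>\hp>0$ there). Hence $\chi_{M_n\cap b_2}T_jf\in L^1(M_n)\cap L^2(M_n)$ for every Borel $b_2$, which the paper states explicitly, and lemma \ref{thm:fgh_projections} applies directly to unbounded $b_2$. The detour through $b_2\cap(-k,k)$ is harmless, though.
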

\begin{proof}
We prove only equality (\ref{eq:projections_on_cU}). The other equality is proved similarly. If $b_1$ is open, equation (\ref{eq:Uspectral}) and  lemma \ref{thm:fgh_projections} show that (\ref{eq:projections_on_cU}) holds in $\cS$ and then, by continuity, in $\cH$.
Consider now a general bounded Borel set $b_1$ and take an open bounded set $G_1$ such that $b_1\subset G_1$. All projections $B_{\oL}(b)$, where $b$ is a Borel set contained in $G_1$, are uniformly bounded: there is $c>0$ such that $\|B_{\oL}(b)\|<c$ (section \ref{sec:projections_bounded}). Let $\mu_{\ell}$ be the Lebesgue measure on $\R$. There is an open set $G_2$ such that $b_1\subseteq G_2\subset G_1$ and $\mu_{\ell}(G_2\setminus b_1)$ is as small as desired. Let $f\in\cH$ and $g=\ures_n^{-1}\hat{\chi}_n(b_2)U_nf$. Since $G_2$ is open and bounded we have $B_{\oL}(G_2)g=\ures_n^{-1}\hat{\chi}_n(G_2\cap b_2)U_nf$, and thus
\be
B_{\oL}(b_1)g-\ures_n^{-1}\hat{\chi}_n(b_1\cap b_2)U_nf = \ures_n^{-1}\hat{\chi}_n\big((G_2\setminus b_1)\cap b_2\big) U_nf - B_{\oL}(G_2\setminus b_1)g.
\label{eq:small_difference}
\ee
There is also an open set $G_3$ such that $G_2\setminus b_1\subseteq G_3$ and $\mu_{\ell}(G_3)-\mu_{\ell}(G_2\setminus b_1)$ is as small as desired. We have
\be
\begin{gathered}
\|B_{\oL}(G_2\setminus b_1)g\| = \|B_{\oL}(G_2\setminus b_1)B_{\oL}(G_3)g\| \leq c \|B_{\oL}(G_3)g\|  \\[4pt]
\leq c \|\ures_n^{-1}\|\|\hat{\chi}_n(G_3\cap b_2)U_nf\|_{J_n},
\end{gathered}
\ee
where we used equality (\ref{eq:projections_on_cU}) for $G_3$, which is open and bounded.
Take any $\epsilon>0$. By a proper choice of $G_2$  and $G_3$ we can have
\begin{gather}
\|\hat{\chi}_n\big((G_2\setminus b_1)\cap b_2\big)U_nf\|_{J_n} \leq \frac{\epsilon}{2\|\ures_n^{-1}\|}, \\[4pt]
\|\hat{\chi}_n(G_3\cap b_2)U_nf\|_{J_n} \leq \frac{\epsilon}{2c\|\ures_n^{-1}\|}.
\end{gather}
Then, from equation (\ref{eq:small_difference}) we obtain $\|B_{\oL}(b_1)g-\ures_n^{-1}\hat{\chi}(b_1\cap b_2)U_nf \|\leq\epsilon$ for any $\epsilon>0$, what proves equality (\ref{eq:projections_on_cU}).
\end{proof}

We  extend  $B_{\oL}$ to a bounded algebra homomorphism, $E_{\oL}$, from $\cB$ onto a subset of $\cL(\cH)$,  by defining the map $E_{\oL}:\cB\to\cL(\cH)$ by
\be
E_{\oL}(b) = B_{\oL}(N_n\cap b) + \ures_n^{-1}\hat{\chi}_n(b)U_n, \quad b\in\cB, \quad n\in\N.
\ee
The right-hand side of the above equation is independent of $n$, what can be easily proved using the following equalities, which hold for $n\geq m$:
first, from the definitions it is obvious that $U_m=\hat{\chi}_n(M_m)U_n$ and $V_m=\hat{\chi}_n(M_m)V_n$; second, $\ures_n^{-1}\hat{\chi}_n(M_m)\hat{\chi}_n(b)U_n=\ures_m^{-1}\hat{\chi}_m(b)U_m$ (this is also rather obvious but it can be proved using the analogue of the Parseval equality); 
and third, $N_n\cup (M_n\setminus M_m)=N_m$. From these facts, and using lemma \ref{thm:BLUn}, we have
\be
\begin{gathered}
B_{\oL}(N_n\cap b) + \ures_n^{-1}\hat{\chi}_n(b)U_n =  B_{\oL}(N_n\cap b) + \ures_n^{-1}\hat{\chi}_n(M_1)\hat{\chi}_n(b)U_n 
 \\[2pt]
+ \ures_n^{-1}\hat{\chi}_n(M_n\setminus M_1)\hat{\chi}_n(b)U_n  = B_{\oL}(N_n\cap b) +  \ures_1^{-1}\hat{\chi}_1(b)U_1
 \\[2pt]
 + B_{\oL}\big((M_n\setminus M_1)\cap b\big) \ures_n^{-1}U_n = B_{\oL}(N_1\cap b) + \ures_1^{-1}\hat{\chi}_1(b)U_1.
\end{gathered}
\ee

The strong limit of $ \ures_n^{-1}\hat{\chi}_n(b)U_n$ as $n\to\infty$ exits,
\be
 \slim_{n\to\infty} \ures_n^{-1}\hat{\chi}_n(b)U_n = B_{\oL}\big((M\setminus M_1)\cap b\big) + \ures_1^{-1}\hat{\chi}_1(b)U_1, \label{eq:strong_limit}
\ee
and is a bounded projection, with a bound independent of $b$. Then, to fully exploit the spectral expansions, equations (\ref{eq:spectral_L_Mn}) and (\ref{eq:Uspectral}), it may be interesting to express $E_{\oL}$ as
\be
E_{\oL}(b) = B_{\oL}(N\cap b) + \slim_{n\to\infty} \ures_n^{-1}\hat{\chi}_n(b)U_n, \quad b\in\cB.
\ee

We are ready to discuss the conclusions of this work, collected in the next two theorems.

\begin{thm}
\label{thm:spectral_measure}
The map $E_{\oL}$ extends $B_{\oL}$ from $\cB_b$ to $\cB$, and is a spectral resolution of the identity for $\oL$. That is:
\begin{enumerate}
\item  $E_{\oL}(b)=B_{\oL}(b)$ for each bounded Borel set $b$.
\item There is $c>0$ such that $\|E_{\oL}(b)\|\leq c$ for all $b\in\cB$.
\item $E_{\oL}(\emptyset)=0$ and $E_{\oL}(\R)=E_{\oL}\big(\sigma(\oL)\big)=\id$.
\item $E_{\oL}(b_1\cap b_2)=E_{\oL}(b_1)E_{\oL}(b_2)$ for $b_1,b_2\in\cB$.
\item $E_{\oL}(b_1\cup b_2)=E_{\oL}(b_1)+E_{\oL}(b_2)-E_{\oL}(b_1)E_{\oL}(b_2)$ for $b_1,b_2\in\cB$.
\item If $\{b_i, i\in\N\}$ is a sequence of pairwise disjoint Borel sets and $f\in\cH$, then $E_{\oL}\big(\cup_i b_i\big) f = \sum_{i=1}^\infty E_{\oL}(b_i)f$.
\item If $\{b_i, i\in\N\}$ is an ascending sequence of Borel sets and $f\in\cH$, then \mbox{$E_{\oL}\big(\cup_i b_i\big) f = \lim_i E_{\oL}(b_i)f$}. If the sequence is descending, then we have $E_{\oL}\big(\cap_i b_i\big) f = \lim_i E_{\oL}(b_i)f$.  
\item If $b\in\cB$ and $z\in\rho(\oL)$ then $E_{\oL}(b) R_{\oL}(z) = R_{\oL}(z)E_{\oL}(b)$.
\item If $b\in\cB$ is bounded, then $\ran\big(E_{\oL}(b)\big)\subset\dom(L)$.
\item If $b\in\cB$ and $f\in\dom(\oL)$ then  $E_{\oL}(b)f\in\dom(\oL)$ and
\be
E_{\oL}(b)\oL f=\oL E_{\oL}(b)f.
\ee
\item If $b\in\cB$ and $\oL_b$ is the restriction of $\oL$ to $\ran\big( E_{\oL}(b)\big)$, then $\sigma(\oL_b)\subseteq\overline{b}$.
\end{enumerate}
\end{thm}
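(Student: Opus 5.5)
The plan is to work from the definition $E_{\oL}(b)=B_{\oL}(N_n\cap b)+\ures_n^{-1}\hat{\chi}_n(b)U_n$, which is independent of $n$. We choose $n$ conveniently and reduce every property either to the properties of $B_{\oL}$ on $\cB_b$ (section \ref{sec:projections_bounded}) or to the properties of the orthogonal projections $\hat{\chi}_n$ in $\cJ$, transported by the isomorphism $\ures_n:\cU_n\to\cJ_n$ of theorem \ref{thm:UVrestricted}.

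\textbf{Point 1.} For bounded $b$ we write $B_{\oL}(b)=B_{\oL}(b)B_{\oL}(N_n)+B_{\oL}(b)\big(\id-B_{\oL}(N_n)\big)$. Using (\ref{eq:spectral_UnVn}), the second term is $B_{\oL}(b)\ures_n^{-1}U_n=B_{\oL}(b)\ures_n^{-1}\hat{\chi}_n(\R)U_n$. Lemma \ref{thm:BLUn} turns this into $\ures_n^{-1}\hat{\chi}_n(b)U_n$, and the first term is $B_{\oL}(N_n\cap b)$.

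\textbf{Point 2.} This is the main obstacle. The bound $\|B_{\oL}(N_n\cap b)\|$ is uniform in $b$ because $N_n$ is bounded. However, $\|\ures_n^{-1}\|\,\|U_n\|$ depends on $n$, so a fixed $n$ (say $n=1$) must be used for all $b$. With $n=1$ fixed, $\|E_{\oL}(b)\|\le c_1+\|\ures_1^{-1}\|\,\|U_1\|$ for every $b\in\cB$, since $\|\hat{\chi}_1(b)\|\le1$. The independence of $n$ therefore gives uniform boundedness for free.

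\textbf{Point 3.} $E_{\oL}(\emptyset)=0$ is immediate. $E_{\oL}(\R)=B_{\oL}(N_n)+\ures_n^{-1}U_n=\id$ by (\ref{eq:spectral_UnVn}). Since $N_n\subset\sigma(\oL)$ and $M_n\subset\sigma(\oL)$, we also get $E_{\oL}(\sigma(\oL))=E_{\oL}(\R)$.

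\textbf{Points 4 and 5.} We expand the product $E_{\oL}(b_1)E_{\oL}(b_2)$. The cross terms $B_{\oL}(N_n\cap b_1)\ures_n^{-1}\hat{\chi}_n(b_2)U_n$ equal $\ures_n^{-1}\hat{\chi}_n(N_n\cap b_1\cap b_2)U_n=0$ by lemma \ref{thm:BLUn}, because $N_n\cap M_n=\emptyset$. The other cross term vanishes because $U_nB_{\oL}(N_n\cap b)=0$, which follows from point 1 of theorem \ref{thm:UV}. The diagonal terms give $B_{\oL}(N_n\cap b_1\cap b_2)$ and $\ures_n^{-1}\hat{\chi}_n(b_1)\hat{\chi}_n(b_2)U_n$, using $U_n\ures_n^{-1}=\id$ on $\cJ_n$. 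Point 5 follows from additivity of each piece.

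\textbf{Points 6 and 7.} Countable additivity holds for $B_{\oL}$ on subsets of the bounded set $N_n$. It also holds for $\hat{\chi}_n$ strongly in $\cJ$, and $\ures_n^{-1}$ is bounded, so the strong convergence is preserved. Monotone continuity then follows from countable additivity together with the uniform bound of point 2.

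\textbf{Points 8, 9 and 10.} For point 8, commutation with $R_{\oL}(z)$ follows from (\ref{eq:BL_R}) for the first piece and from theorem \ref{thm:UV}(2) for the second. The latter gives $\ures_n^{-1}R_{Q_n}(z)=R_{\oL}(z)\ures_n^{-1}$ on $\cJ_n$, and $R_{Q_n}(z)$ commutes with $\hat{\chi}_n(b)$. Point 9 follows from point 1 and (\ref{eq:projection_L_bounded}). For point 10, commutation with the resolvent gives the standard consequence: if $f\in\dom(\oL)$, write $f=R_{\oL}(z)g$; then $E_{\oL}(b)f=R_{\oL}(z)E_{\oL}(b)g\in\dom(\oL)$, and the intertwining identity follows.

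\textbf{Point 11.} For $z\notin\overline{b}$ we build an inverse of $(\oL-z)$ on $\ran E_{\oL}(b)$. On the $\cJ_n$ side we use $\ures_n^{-1}(Q_n-z)^{-1}\hat{\chi}_n(b)U_n$, which is bounded since $\operatorname{dist}(z,b)>0$. On the $N_n$ side, $N_n\cap b$ is bounded, so we can use the selfadjoint calculus: take $\oLth\, g(\Lb)E_{\Lb}(N_n\cap b)\ioLth$ with $g(\xi)=(\xi-z)^{-1}\chi_b(\xi)$, which is bounded as in (\ref{eq:Bdef}). For real $z\notin\overline{b}$ the same formulas work, since only $\operatorname{dist}(z,b)>0$ is used.
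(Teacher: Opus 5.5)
Your proposal is correct and follows the paper's proof essentially point by point. You fix $n=1$ for the uniform bound, kill the cross terms with lemma \ref{thm:BLUn} and $\iker(U_n)=\ran\big(B_{\oL}(N_n)\big)$, and transport the properties of $\hat{\chi}_n$ through the bounded map $\ures_n^{-1}$. The only minor variations are that you derive point 10 from the resolvent commutation of point 8 rather than from point 9, and that in point 11 you write down the inverse of $\oL_b-z\id$ explicitly instead of splitting $(\oL_b-z\id)f=g$ into its $N_n$ and $M_n$ components, which amounts to the same argument.
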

\begin{proof}
Point 1 follows easily from lemma \ref{thm:BLUn} (with $b_1=b$ and $b_2=\R$) and equality (\ref{eq:spectral_UnVn}), since for any $b\in\cB_b$ we have 
\be
\begin{gathered}
E_{\oL}(b) = B_{\oL}(b)  \Big(B_{\oL}(N_n) + \ures_n^{-1}U_n\Big) = B_{\oL}(b).
\end{gathered}
\label{eq:restriction_for_bounded_open_sets}
\ee

Point 2 follows from the fact that the projections $B_{\oL}(b)$ corresponding to Borel sets $b$ contained in the bounded set $N_1$ are uniformly bounded, and  that $\|\hat{\chi}_1(b)\|\leq 1$ for all $b\in\cB$. 
Point 3 follows from $B_{\oL}(\emptyset)=0$ and $\hat{\chi}(\emptyset)=0$, from $\sigma(\oL)=M_n\cup N_n$, and from relation (\ref{eq:spectral_UnVn}). 
Points 4, 5, and 6 follow from the corresponding properties of the projections $B_{\oL}$ and $\hat{\chi}_n$, and from $B_{\oL}(N_n\cap b_1)\ures_n^{-1}\hat{\chi}_n(b_2)U_n=0$ for any $b_1,b_2\in\cB$. 
Point 7 follows from point 6 and simple standard arguments.
Point 8 follows from the corresponding property for $B_{\oL}$, equation (\ref{eq:BL_R}), and point 2 of theorem \ref{thm:UV}.
Point 9 follows from equality (\ref{eq:projection_L_bounded}), since $E_{\oL}(b)=B_{\oL}(b)$ for $b\in\cB_b$.
Point 10 follows easily from point 9 of this theorem.

To prove point 11, we determine $\rho(\oL_b)$ by considering $(\oL_b-z\id)f=g$ for $g\in\ran\big(E_{\oL}(b)\big)$ and $z\in\C$. Applying $B_{\oL}(N_n\cap b)$ and $\hat{\chi}_n(b)U_n$ to both sides of this equation, and using point 3 of theorem \ref{thm:UV}, we get
\begin{gather}
E_{\Lb}(N_n\cap b) (\Lb-z\id)\ioLth f =  E_{\Lb}(N_n\cap b) \ioLth g, \label{eq:resolvent_restriction_1}\\[4pt]
\big(Q_n-z\id)\hat{\chi}_n(b)U_nf = \hat{\chi}_n(b) U_ng. \label{eq:resolvent_restriction_2}
\end{gather}
We will have $z\in\rho(\oL_b)$ if and only if the above equations have a unique uniformly bounded solution for each $g\in\ran\big(E_{\oL}(b)\big)$. If $h\in\ran\big(E_{\Lb}(N_n\cap b)\big)$ then $h\in\dom(\Lb)\subset\dom(\oLth)$ and
$h =\ioLth E_{\oL}(N_n\cap b)\oLth h$. Therefore, equation (\ref{eq:resolvent_restriction_1}) has a unique uniformly bounded solution if and only if $z\in\rho(\Lb_{N_n\cap b})$, where $\Lb_{N_n\cap b}$ is the restriction of $\Lb$ to $\ran\big(E_{\Lb}(N_n\cap b)\big)$. Since the image of $\ran\big(E_{\oL}(b)\big)$ under $\hat{\chi}_n(b)U_n$ is equal to $\ran\big(\hat{\chi}_n(b)\big)$, equation (\ref{eq:resolvent_restriction_2}) will have a unique uniformly bounded solution if and only if $z\in\rho(Q_{M_n\cap b})$, where $Q_{M_n\cap b}$ is the restriction of $Q_n$ to $\ran\big(\hat{\chi}(M_n\cap b)\big)$. Therefore, $\rho(\oL_b)=\rho(\Lb_{N_n\cap b})\cap\rho(Q_{M_n\cap b})$, and then $\sigma(\oL_b) =\sigma(\Lb_{N_n\cap b})\cup\sigma(Q_{M_n\cap b})\subseteq (\overline{N_n\cap b})\cup (\overline{M_n\cap b})\subseteq\overline{b}$, where we used the spectral properties of the selfadjoint operators $\Lb$ and $Q$.
\end{proof}

The integral with respect to a projection valued measure such as $E_{\oL}$ or $E_{\Lb}$, which appears in the next theorem, is defined and studied, for instance, in Dunford and Schwartz, chapter XI  \cite{Dunford1971}.

\begin{thm}
\label{thm:spectral_operator}
If $\{b_n$, $n\in\N\}$ is an ascending sequence of bounded Borel sets such that $\sigma(\oL)\subseteq \cup_n b_n$, then, for $f\in\dom(\oL)$,
\be
\oL f= \lim_{n\to\infty} \int_{b_n} \lambda \, E_{\oL}(d\lambda)\,f. \label{eq:L_spectral}
\ee
\end{thm}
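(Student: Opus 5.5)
The strategy is to reduce the statement to the bounded spectral identity $\oL E_{\oL}(b)f=\int_b \lambda\,E_{\oL}(d\lambda)f$ for bounded Borel sets $b$, and then let $b=b_n$ increase. Once that identity holds, point 10 of theorem \ref{thm:spectral_measure} gives, for $f\in\dom(\oL)$,
\be
\int_{b_n}\lambda\,E_{\oL}(d\lambda)f=\oL E_{\oL}(b_n)f=E_{\oL}(b_n)\oL f .
\ee
By point 7 (continuity from below) together with point 3, and using $E_{\oL}(\cup_n b_n)=E_{\oL}(\sigma(\oL))=\id$, we get $E_{\oL}(b_n)\oL f\to\oL f$. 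This proves equation (\ref{eq:L_spectral}).

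To prove the bounded identity, fix a bounded $b$. By point 1 we have $E_{\oL}(b)=B_{\oL}(b)=\oLth E_{\Lb}(b)\ioLth$. The integral $\int_b\lambda\,E_{\oL}(d\lambda)$ of a bounded function against a bounded spectral measure is the norm limit of Riemann-type sums $\sum_k\lambda_k E_{\oL}(e_k)$ over finite Borel partitions $\{e_k\}$ of $b$ with small diameters. This is in the sense of Dunford--Schwartz, chapter XI, and it is legitimate because $\|E_{\oL}(e)\|\le c$ uniformly (point 2).

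Each such sum equals $\oLth\big(\sum_k\lambda_kE_{\Lb}(e_k)\big)\ioLth$. Its distance to $\oLth\Lb E_{\Lb}(b)\ioLth$ is controlled through
\be
\sum_k\lambda_kE_{\Lb}(e_k)-\Lb E_{\Lb}(b)=\sum_k E_{\Lb}(e_k)\,(\lambda_k-\Lb)\,E_{\Lb}(e_k).
\ee
Putting an $\oLth$ on the left is the delicate point. I would handle it by writing each term as $B_{\oL}(e_k)\,\oLth(\lambda_k-\Lb)E_{\Lb}(e_k)\ioLth$, or, more cleanly, by using equation (\ref{eq:projection_L_bounded}). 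That formula gives $B_{\oL}(e)=(\oLo+P_0)^{-1}\ioLth\Lb E_{\Lb}(e)\ioLth+P_0\oLth E_{\Lb}(e)\ioLth$, where both pieces are bounded operators acting on functions of $\Lb$ restricted to the bounded set $b$.

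The simpler alternative is to avoid Riemann sums altogether: verify the identity first for $\lambda$ replaced by simple functions, where it is trivial, and then approximate $\lambda\chi_b$ uniformly by simple functions. Integrals against $E_{\oL}$ converge in norm by point 2. On the operator side one needs $\oL E_{\oL}(b)$ to be bounded with the corresponding convergence, and this follows from
\be
\oL E_{\oL}(b)=\oLth\,\Lb E_{\Lb}(b)\,\ioLth=\oLth R_{\Lb}(z)(\Lb-z)\Lb E_{\Lb}(b)\ioLth ,
\ee
which is bounded because $\oLth R_{\Lb}(z)$ is bounded (theorem \ref{thm:resolvents}). The same factorization $\oLth R_{\Lb}(z)\,(\Lb-z)\,g(\Lb)E_{\Lb}(b)\,\ioLth$ shows that the map $g\mapsto \oLth g(\Lb)E_{\Lb}(b)\ioLth$ is norm-continuous under uniform convergence of $g$ on $b$. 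Combining this with $E_{\oL}(e)=\oLth E_{\Lb}(e)\ioLth$ for $e\subseteq b$, we obtain $\int_b g\,dE_{\oL}=\oLth g(\Lb)E_{\Lb}(b)\ioLth$ for bounded Borel $g$. Taking $g(\lambda)=\lambda$ gives $\int_b\lambda\,E_{\oL}(d\lambda)=\oLth\Lb E_{\Lb}(b)\ioLth$.

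It remains to identify this operator with $\oL E_{\oL}(b)$, using $\ran(E_{\oL}(b))\subset\dom(\oL)$ (point 9) and the relation $\oL=\oLth\Lb\ioLth$ on the appropriate vectors. I expect this domain bookkeeping to be the main obstacle; it is handled exactly as in the derivation of equation (\ref{eq:projection_L_bounded}).
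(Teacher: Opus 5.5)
Your proposal is correct and follows essentially the paper's route: write $E_{\oL}(e)=B_{\oL}(e)$ for Borel $e\subseteq b_n$ as a bounded operator times $E_{\Lb}(e)\ioLth$, so that $\int_{b_n}\lambda\,E_{\oL}(d\lambda)=\oLth\Lb E_{\Lb}(b_n)\ioLth=\oL E_{\oL}(b_n)$, and then pass to the limit using point 10 and points 7 and 3 of theorem \ref{thm:spectral_measure}. The differences are cosmetic. You use the resolvent factorization $\oLth R_{\Lb}(z)(\Lb-z\id)$ from equation (\ref{eq:Bdef}) and spell out the simple-function approximation, whereas the paper uses equation (\ref{eq:projection_L_bounded}) with the bounded factor $(\oLo+P_0)^{-1}\ioLth\Lb E_{\Lb}(b_n)$. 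Also, the boundedness of $\oLth R_{\Lb}(z)$ comes from the relative boundedness of $\oLth$ with respect to $\Lb$ stated before theorem \ref{thm:resolvents}, and the domain identification you flag as the main obstacle is a routine check from the definition of $\dom(\Lb)$.
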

\begin{proof}
We notice that $E_{\oL}(b_n)=B_{\oL}(b_n)$ since $b_n$ is bounded for all $n\in\N$. For any $b\subseteq b_n$ we can write equation (\ref{eq:projection_L_bounded}) as
\be
B_{\oL}(b) = (\oLo+P_0)^{-1}\ioLth \Lb E_{\Lb}(b_n) E_{\Lb}(b)\ioLth + P_0\oLth E_{\Lb}(b)\ioLth.
\ee
Then, since $\Lb E_{\Lb}(b_n)$ is a bounded operator, we have
\be
\begin{gathered}
\int_{b_n} \lambda \, E_{\oL}(d\lambda) = 
 (\oLo+P_0)^{-1}\ioLth \Lb E_{\Lb}(b_n) \int_{b_n} \lambda \, E_{\Lb}(d\lambda)\, \ioLth 
 \\[2pt]
 + P_0\oLth\int_{b_n} \lambda \, E_{\Lb}(d\lambda)\, \ioLth = \oL B_{\oL}(b_n) = \oL E_{\oL}(b_n) ,
 \end{gathered}
 \label{eq:spectral_operator_bounded} 
\ee
where in the last equality we used $\int_{b_n}\lambda E_{\Lb}(d\lambda)=\Lb E_{\Lb}(b_n)$.  For $f\in\dom(\oL)$ we have $\oL E_{\oL}(b_n)f = E_{\oL}(b_n)\oL f$  (point 10 of theorem \ref{thm:spectral_measure}). Using equation (\ref{eq:spectral_operator_bounded}) and taking the limit $n\to\infty$  (using points 7 and 3 of theorem \ref{thm:spectral_measure}) we obtain Equation  (\ref{eq:L_spectral}).
\end{proof}

Theorem \ref{thm:spectral_operator} shows that $\oL$ is an unbounded spectral operator of scalar type, in the sense of Bade \cite{Bade1954}. Furthermore, theorems \ref{thm:spectral_measure} and  \ref{thm:spectral_operator} ensure that it is possible to establish an operational calculus for $\oL$ as it is done for selfadjoint operators. It is clear that $\oLa$ is also an unbounded spectral operator of scalar type, and its spectral resolution of the identity is $E_{\oLa}(b)=E_{\oL}(b)^*$ for $b\in\cB$.

\section{Final remarks}

If, instead of $\int_\R (1+|x|^3)|\pot^{(k)}_i(x)|dx<\infty$, we require that the functions $\pot_i$ satisfy $\int_\R \exp(\gamma|x|)|\pot^{(k)}_i(x)|dx<\infty$ for some $\gamma>0$, $i=1,2$, and $0\leq k\leq 2$, as it happens in the case of the spin wave dynamics in presence of a magnetic soliton, it can be shown that the fundamental matrices of theorem \ref{thm:sol_system} are such that,  for fixed $x$, $\Phi_+(x,\cdot)$ and $\Phi_-(x,\cdot)$ can both be extended analytically from the upper and lower half-planes to open sets that contain $\R\setminus\{\hm,\hp\}$. This implies that the zeros of $W_+$ and $W_-$ are isolated and can accumulate only at $\hm$ and $\hp$. Therefore, in this case $N=\overline{\sigma_p(\oL)}\cup\{\hp\}$ and $M=(\hp,\infty)\setminus\sigma_p(\oL)$, and we have, for any $f\in\cH$,
\be
B_{\oL}(N) f = \sum_{n} \sum_{\alpha=1}^{d_n} (f,\xi_{n\alpha}^*)\xi_{n\alpha},
\ee
where $n$ runs over the eigenvalues of $\oL$ and $d_n\leq 2$ is the dimension of the fundamental subspace corresponding to the $n$-th eigenvalue, $\lambda_n$. For each $n$, the functions $\xi_{n\alpha}$ span the fundamental subspace of $\oL$ corresponding to $\lambda_n$, and the functions $\xi_{n\alpha}^*$ span the  fundamental subspace of $\oLa$ corresponding to $\lambda_n$, and satisfy the condition $(\xi_{n\alpha},\xi^*_{m\beta})=\delta_{nm}\delta_{\alpha\beta}$. If $\psi_{n\alpha}$ constitute an orthogonal basis the fundamental subspace of $\Lb$ corresponding to $\lambda_n$, we can take $\xi_{n\alpha}=\oLth\psi_{n\alpha}$ and $\xi^*_{n\alpha}=\ioLth\psi_{n\alpha}$.

The main obstacle to proof that an operator is spectral is found in proving that it has no spectral singularity (that is, that the spectral projections are uniformly bounded). For instance, Huige proved \cite{Huige1971} that ordinary differential operators of a certain class, which include $\oL$ and $\oLa$, have, under certain hypothesis, spectral projections associated to the Borel sets whose closure exclude a finite set of exceptional points. However, these projections need not be uniformly bounded. Moreover, among the hypothesis there is one which implies the assumption that the only spectral singularities belong to the exceptional set.

Mackey proved (theorem 55 of \cite{Mackey1952}, see also \cite{Wermer1954}) that any bounded spectral measure, such as $E_{\oL}$, is similar to a selfadjoint spectral measure, in the sense that there is a continuous linear bijection $A$ of $\cH$, hence with continuous inverse, such that $A^{-1}E_{\oL}(b)A$ is selfadjoint for each $b\in\cB$. This means that any spectral operator of scalar type is similar to a self-adjoint operator. In our case, there is a selfadjoint operator $H$ such that $A^{-1}\oL A=H$. A similar statement is true for $\oLa$. Evidently, $A$ is not equal to $\oLth$ and thus $H$ is not equal to $\Lb$. Nonselfadjoint operators that are similar to selfadjoint operators are sometimes called quasi-selfadjoint operators in the literature \cite{Bagarello2015}. Actually, quasi-selfadjoint operators are defined as operators which are similar to their adjoints, but it is easy to show (proposition 5.5.2 of \cite{Bagarello2015}) that quasi-selfadjointness is equivalent to being similar to a selfadjoint operator. Thus, using this terminology, the conclusion of this work may be quickly summarized by saying that $\oL$ and $\oLa$ are quasi-selfadjoint operators.

\section*{Acknowledgements}
This work received financial support from Grant No. PID2022-138492NB-I00-XM4, funded by MCIN/AEI/10.13039/501100011033, and from Grant No. E11\_23R/M4, funded by Diputaci\'on General de Arag\'on.


\appendix

\section{Proof of theorem \ref{thm:sols_edo} \label{app:sols_edo}}

To proof theorem \ref{thm:sols_edo} we analyze the solutions of $\oL u = \lambda u$ for $\lambda\in [\hp,\lambda_s]$ and for $\lambda\in[\lambda_s,\infty)$ separately. Recall that $\lambda_s>\hp$ is a sufficiently large positive number. This fourth order differential equation is equivalent to the first order linear system (\ref{eq:system}).
Recall also that for $\lambda\in [\hp,\infty)$ the roots of $\pchar(\mu)-\lambda$ 
are $\mu_1 = \iu\mur$, $\mu_2 = \nu$, $\mu_3=-\nu$, $\mu_4=-\iu\mur$, with $\theta$ and $\nu$ functions of $\lambda$ given by equations (\ref{eq:thetanu}). We denote by $e_j$ ($1\leq j\leq 4$) the vectors of the canonical basis of $\C^4$. For $1\leq k\leq 4$ the eigenvalues of the matrix $A$ entering equation (\ref{eq:system}) are $\iu\mu_k$, and an eigenvector corresponding to $\iu\mu_k$ is $p_k=\sum_j (\iu\mu_k)^{j-1}e_j$. The $4\times 4$ matrix whose $k$-th column is $p_k$ is denoted by $\Pi$. If all the eigenvalues of $A$ are different, then $\Pi$ is the transpose of a non-singular Vandermonde matrix and we have
\be
\big(\Pi^{-1}\big)_{k4} = \frac{\iu}{\pchar^{\,\prime}(\mu_k)} = \frac{\iu}{\mu_k(\mu_k^2+\ha)}.
\label{eq:vandermonde}
\ee
Let $D(x)$ be the $4\times 4$ diagonal matrix with matrix elements $D(x)_{jk}=\exp(\iu\mu_jx)\delta_{jk}$. Notice that $\Pi D(x)$ is a fundamental matrix of the system $y^\prime=Ay$ if $\lambda>\hp$. The $\lambda$ dependence of $\mu_k$, $p_k$, $\Pi$, and $D(x)$ is not always  explicitly shown to avoid clumsy expressions.

Theorem \ref{thm:sols_edo} is a corollary of the following two theorems about the solutions of the linear system (\ref{eq:system}).

\begin{thm}
\label{thm:sols_system_no_branch}
If $\lambda_s>\hp$ is large enough, there are functions $y_k^+$ and $y_k^-$, $1\leq k\leq 4$, with domain $\R\times[\lambda_s,\infty)$ and image in $\C^4$, which are written in the form
\be
\begin{gathered}
y_k^+(x,\lambda) = \exp(\iu\mu_kx) \Pi\big(e_k+v^+_k(x,\lambda)\big), \\[4pt]
 y_k^-(x,\lambda) = \exp(\iu\mu_kx) \Pi\big(e_k+v^-_k(x,\lambda)\big),
\end{gathered}
\label{eq:app:ykp_ykm}
\ee
for $(x,\lambda)\in\R\times[\lambda_s,\infty)$, such that
\begin{enumerate}
\item $\big\{y_k^+(\cdot,\lambda),\,1\leq k\leq 4\big\}$ and
$\big\{y_k^-(\cdot,\lambda),\,1\leq k\leq 4\big\}$ are two fundamental sets of solutions of the system (\ref{eq:system}) for each $\lambda\in[\lambda_s,\infty)$.
\item The functions $v_k^+$ and $v_k^-$ are continuous on $\R\times[\lambda_s,\infty)$, and satisfy
\bea
&&|v_k^+(x,\lambda)|\leq c\,\nu(\lambda)^{-1}\big(1+x^3\big)^{-1}, \quad x\geq 0, \quad \lambda\in[\lambda_s,\infty), \label{eq:app:bound_v} \\[4pt]
&&|v_k^-(x,\lambda)|\leq c\,\nu(\lambda)^{-1}\big(1-x^3\big)^{-1}, \quad x\leq 0, \quad \lambda\in[\lambda_s,\infty),
\hspace*{1.5cm}
\eea
where $c$ is a constant.
\item For each fixed $x\in\R$, $v_k^+(x,\cdot)$ and $v_k^-(x,\cdot)$ are analytic in $[\lambda_s,\infty)$.
\end{enumerate}
\end{thm}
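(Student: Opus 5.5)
We will follow the Levinson-type argument (theorem 8.1 of chapter 3 of Coddington and Levinson), working in the eigenbasis of $A$. We set $y=\Pi w$, so that $w^\prime=(\Lambda_0+\Pi^{-1}B\Pi)w$, where $\Lambda_0=\mathrm{diag}(\iu\mu_1,\dots,\iu\mu_4)$. The first step is to estimate the perturbation $\widetilde B=\Pi^{-1}B\Pi$. Only the fourth row of $B$ is nonzero, so $\widetilde B_{kl}=(\Pi^{-1})_{k4}\sum_{m}B_{4m}(\iu\mu_l)^{m-1}$. The entries $B_{41},B_{42},B_{43}$ involve only $\pot_i$, $\pot_i^\prime$, $\pot_2^{\prime\prime}$ (the constants $h_1h_2$ and $h_1+h_2$ cancel against $A$). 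Using (\ref{eq:vandermonde}), $|(\Pi^{-1})_{k4}|\le c/(|\mu_k|(|\mu_k|^2+\ha))$. Since $|\mu_l|\le\theta$ and $\nu\sim\theta\sim\lambda^{1/4}$ for large $\lambda$, we expect
\[
|\widetilde B(x,\lambda)|\le c\,\nu(\lambda)^{-1}\beta(x),\qquad \beta=\sum_{i}\big(|\pot_i|+|\pot_i^\prime|+|\pot_i^{\prime\prime}|+|\pot_i|^2\big),
\]
with $\int_\R(1+|x|^3)\beta(x)\,dx<\infty$ by hypothesis 2. The $|\pot_i|^2$ term is harmless, since $\pot_i$ is bounded. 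The whole gain $\nu^{-1}$ comes from the factor $\lambda^{-3/4}$ in $(\Pi^{-1})_{k4}$ against a growth of at most $\theta^2$ in the $\pot_i$ column, or $\theta$ in the $\pot_2^\prime$ column.

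For $y_k^+$ we write $w=\exp(\iu\mu_kx)(e_k+v_k^+)$ and solve the integral equation
\[
v_k^+(x)=\int_x^\infty \!\!P_1(x-t)\widetilde B(t)(e_k+v_k^+(t))\,dt-\int_{x_0}^x \!\!P_2(x-t)\widetilde B(t)(e_k+v_k^+(t))\,dt,
\]
where $P_j(s)$ are the diagonal matrices $\exp((\Lambda_0-\iu\mu_k)s)$ restricted to the index sets on which $\Real(\iu\mu_l-\iu\mu_k)$ is $\le0$ (for $P_1$) or $>0$ (for $P_2$). The eigenvalues $\iu\mu_l$ are $-\theta,\iu\nu,-\iu\nu,\theta$, so they are real or purely imaginary, and the splitting is dichotomic. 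This makes $|P_1(s)|\le1$ for $s\le0$ and $|P_2(s)|\le1$ for $s\ge0$. The decaying index for $k=4$ (growth $\theta$) needs $x_0=+\infty$-type treatment; following Levinson, we put every index with $\Real\le$ into the $\int_x^\infty$ piece and the strictly growing ones into $\int_{-\infty}^x$ from a fixed point. On $x\ge0$ we take $x_0=0$ only where needed. Successive approximations then converge for $x\ge0$ once $\lambda_s$ is so large that $c\,\nu(\lambda_s)^{-1}\int_0^\infty\beta<1/2$. This gives $|v_k^+|\le c\nu^{-1}\int_x^\infty\beta$ plus terms of the form $\int_0^x e^{-\delta(x-t)}\beta(t)\,dt$.

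The decay rate $(1+x^3)^{-1}$ follows from $\int_x^\infty\beta\le(1+x^3)^{-1}\int_x^\infty(1+t^3)\beta$. For the exponentially damped convolution pieces (rate $\delta\ge 2\theta_0$), we split at $x/2$. We expect this to be the main obstacle: the terms with $\Real(\iu\mu_l-\iu\mu_k)=0$, namely the pair $\pm\iu\nu$ relative to each other, give no damping. Here the Levinson trick is essential: such terms must be integrated from $+\infty$, not from $0$, so that only the tail integral appears. The gain $\nu^{-1}$ must also come from $\widetilde B$ itself rather than from oscillation.

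Once $v_k^+$ is defined on $[0,\infty)$, it is extended to all of $\R$ by solving the linear system, which gives continuity in $x$. Continuity in $(x,\lambda)$ and analyticity in $\lambda$ follow from uniform convergence of the iterates, each of which is continuous and analytic in $\lambda$. The reason is that $\mu_k(\lambda)$ and $\Pi^{-1}(\lambda)$ are analytic for $\lambda>\hp$, and the bounds are uniform on $\lambda\ge\lambda_s$. Linear independence at fixed $\lambda$ follows from the asymptotics $y_k^+\sim\exp(\iu\mu_kx)p_k$ as $x\to\infty$, since the leading behaviours are independent because $\Pi$ is regular. The construction of $y_k^-$ is identical with $x\to-x$.
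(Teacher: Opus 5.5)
Your route is the paper's. You conjugate by $\Pi$ and bound $|\widetilde B|\le c\,\nu^{-1}|B|$ through the last column of $\Pi^{-1}$; your $\widetilde B=\Pi^{-1}B\Pi$, with the constants absorbed into $A$, is the correct reading of the paper's formulas. You then run Levinson-type successive approximations on $[0,\infty)$ once $c\,\nu(\lambda_s)^{-1}\int_0^\infty|B|\le 1/2$. The $(1+x^3)^{-1}$ rate comes from the tail integral and from splitting the damped convolution at $x/2$, and you extend to $x<0$ by solving the system. Two points need repair.

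\textbf{The index split is written backwards.} With $P(s)=\exp\big((\Lambda_0-\iu\mu_k)s\big)$ and $s=x-t\le0$ on $(x,\infty)$, the entries with $\Real(\iu\mu_l-\iu\mu_k)<0$ have modulus $>1$, so the claim $|P_1(s)|\le1$ is false. For example, for $k=4$, $l=1$ your kernel on $(x,\infty)$ is $e^{2\theta(t-x)}$, and the integral diverges. The correct assignment is the paper's $s_{1j},s_{2j}$, which your later sentences actually use:
indices with $\Real(\iu\mu_l-\iu\mu_k)\ge0$, including the neutral pair $\pm\iu\nu$, go into $-\int_x^\infty$;
indices with $\Real(\iu\mu_l-\iu\mu_k)<0$ go into $\int_0^x$.
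The damping rate in the second group is $\ge\theta\ge\theta_0$, not $2\theta_0$ (for $k=2$, $l=1$ it is exactly $\theta$). This does no harm.

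\textbf{Point 3 does not follow from ``uniform convergence of iterates analytic in $\lambda$''.}
Uniform limits on a real interval need not be real analytic, so you would have to run the iteration for complex $\lambda$ near $[\lambda_s,\infty)$.
There the dichotomy breaks for the neutral pair. For $k=2$ with $\Imag\lambda<0$, or $k=3$ with $\Imag\lambda>0$, the partner index has $\Real(\iu\mu_l-\iu\mu_k)<0$. The kernel on $(x,\infty)$ then grows like $e^{\epsilon(t-x)}$, which need not be integrable against coefficients having only the moment bound $\int(1+|t|^3)\beta<\infty$.
The iterates themselves are not analytic either. Already the first one contains truncated Fourier transforms $\int_x^\infty e^{2\iu\nu(t-x)}B_{4m}(t)\,dt$, which need not be real analytic in $\nu$ when the $\pot_i$ decay only polynomially.

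Done on complex $\lambda$, your argument gives analyticity in a full complex neighbourhood for $k=1,4$, where the splitting is robust. For $k=2,3$ it gives only analyticity in one half-plane, with continuity up to the axis. Real analyticity on the axis would amount to continuation across it, which one cannot expect without exponential decay of the $\pot_i$ (compare the paper's final remarks).

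The paper's proof asserts this step with the same justification, so this is a gap you share with it rather than a departure from it. Still, as written neither argument establishes point 3 for the oscillatory modes $k=2,3$.
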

\begin{proof}
For $1\leq k\leq 4$ we obtain $y_k^+$ as follows. For $1\leq j\leq 4$ define $s_{1j}=1$ if $\Imag\,\mu_j-\Imag\,\mu_k>0$ and $s_{1j}=0$ otherwise, and $s_{2j}=1-s_{1j}$. For $l=1,2$, let $D_l(x)$ be the $4\times 4$ diagonal matrix whose matrix elements are $D_l(x)_{ij} = s_{lj}\exp(\iu\mu_jx)\delta_{ij}$. Notice that $\Pi D_l(x)$ are solution matrices of the system (\ref{eq:system}), and $D(x)=D_1(x)+D_2(x)$. For $(x,\lambda)\in[0,\infty)\times[\lambda_s,\infty)$ consider the integral equation
\be
\begin{gathered}
y(x,\lambda) = \exp(\iu\mu_kx)p_k+\int_0^x \Pi D_1(x)D(-\tau)\Pi^{-1}B(\tau)y(\tau,\lambda)d\tau \\[4pt]
-\int_x^\infty \Pi D_2(x)D(-\tau)\Pi^{-1}B(\tau)y(\tau,\lambda)d\tau.
\end{gathered}
\ee
If this equation has a solution, then it is a solution of the system (\ref{eq:system}). The function $\tilde{y}(x,\lambda)=\Pi^{-1}(\lambda)y(x,\lambda)$, for $(x,\lambda)\in[0,\infty)\times[\lambda_s,\infty)$, satisfies the equation
\be
\begin{gathered}
\tilde{y}(x,\lambda) = \exp(\iu\mu_kx)e_k+\int_0^x D_1(x)D(-\tau)\tilde{B}(\tau)\tilde{y}(\tau,\lambda)d\tau \\[4pt]
-\int_x^\infty D_2(x)D(-\tau)\tilde{B}(\tau)\tilde{y}(\tau,\lambda)d\tau,
\end{gathered}
\label{eq:app:inteq2}
\ee
where $\tilde{B}(\tau)=\Pi B(\tau) \Pi^{-1}$. We need a bound on the norm of $\tilde{B}$. From equations (\ref{eq:AB}) and (\ref{eq:vandermonde}) we get for $1\leq i,j\leq 4$:
\be
\tilde{B}_{ij}(\tau) = \frac{\iu}{\pchar^{\,\prime}(\mu_i)}\sum_{l=1}^3\big(\iu\mu_j\big)^{l-1}B_{4l}(\tau).
\ee
Since $|\mu_j|^{l-1}/\pchar^{\,\prime}(\mu_i) \leq \theta^{l-1}/\big(2\nu(\theta^2+\nu^2)\big)\leq \theta^{l-3}/(2\nu)$
and $\theta\geq\theta_0>0$, we obtain $|\tilde{B}(\tau)|\leq (c_0/\nu)|B(\tau)|$, where $c_0$ is a constant. Let us try successive approximations to the solution of equation (\ref{eq:app:inteq2}): $\tilde{y}^{(0)}(x,\lambda)=0$ and for $j\in\N$
\be
\begin{gathered}
\tilde{y}^{(j)}(x,\lambda) = \exp(\iu\mu_kx)e_k+\int_0^x D_1(x)D(-\tau)\tilde{B}(\tau)\,\tilde{y}^{(j-1)}(\tau,\lambda)d\tau \\[4pt]
-\int_x^\infty D_2(x)D(-\tau)\tilde{B}(\tau)\,\tilde{y}^{(j-1)}(\tau,\lambda)d\tau.
\end{gathered}
\label{eq:app:approx}
\ee
We have $\tilde{y}^{(1)}(x,\lambda)=\exp(\iu\mu_k x)e_k$ and $|\tilde{y}^{(1)}(x,\lambda)|=\exp(-\Imag\,\mu_k x)$. The functions $\tilde{y}^{(j)}$ are continuous on $[0,\infty)\times[\lambda_s,\infty)$, and, for fixed $x$, $\tilde{y}^{(j)}(x,\cdot)$ is analytic in $[\lambda_s,\infty)$. Now we show that if $x\geq 0$ and $\lambda\geq\lambda_s$, with $\lambda_s$ large enough, then 
\be
\big|\tilde{y}^{(j+1)}(x,\lambda)-\tilde{y}^{(j)}(x,\lambda)\big|\leq (1/2)^j\exp\big(-\Imag\,\mu_k x\big)
\label{eq:app:ineq}
\ee
for $j\in\N$. We proceed by induction. For $j=0$ the inequality holds. Assuming that it holds for some $j\in\N\cup\{0\}$, equation (\ref{eq:app:approx}) imply
\be
\big|\tilde{y}^{(j+2)}(x,\lambda)-\tilde{y}^{(j+1)}(x,\lambda)\big|\leq\left(1/2\right)^j 
\exp(-\Imag\,\mu_kx)
\frac{c_0}{\nu(\lambda)}\int_0^\infty |B(\tau)|d\tau.
\ee
If $\lambda_s$ is large enough so that $c_0\,\nu(\lambda_s)^{-1}\int_0^\infty |B(\tau)|d\tau\leq 1/2$, then inequality (\ref{eq:app:ineq}) is proved (recall that $\nu$ is an increasing function of $\lambda$). Hence, the sequence $\tilde{y}^{(j)}$ converges uniformly on each compact set $[0,x_1]\times[\lambda_s,\lambda_1]$, with $x_1\in\R^+$ and $\lambda_1>\lambda_s$, to a function $\tilde{y}_k^+$ which, consequently, is continuous in $[0,\infty)\times[\lambda_s,\infty)$. Moreover, for fixed $x\in[0,\infty)$, the function $\tilde{y}_k^+(x,\cdot)$ is analytic in a neighborhood of $[\lambda_s,\infty)$. The bound $|\tilde{y}_k^+(x,\lambda)|\leq 2\exp(-\Imag\,\mu_kx)$ is easily obtained by summing the inequality
\be
|\tilde{y}^{(j+1)}(x,\lambda)| - |\tilde{y}^{(j)}(x,\lambda)| \leq (1/2)^j\exp(-\Imag\,\mu_kx),
\ee
from $j=0$ to $l$ and taking the limit $l\to\infty$. From $\tilde{y}_k^+(x,\lambda)$ we obtain $y_k^+(x,\lambda)=\Pi\tilde{y}_k^+(x,\lambda)$, and $v_k^+(x,\lambda)=\exp(-\iu\mu_kx)\tilde{y}_k^+(x,\lambda)$, so that
\be
\begin{gathered}
v_k^+(x,\lambda) = \exp(-\iu\mu_kx)
\left(\int_0^xD_1(x)D(-\tau)\tilde{B}(\tau)\tilde{y}_k^+(\tau,\lambda)d\tau\right. 
\\[4pt]
\left. -\int_x^\infty D_2(x)D(-\tau)\tilde{B}(\tau)\tilde{y}_k^+(\tau,\lambda)d\tau\right).
\end{gathered}
\ee
From this equation we get $|v_k^+(x,\lambda)|\leq (2c_0/\nu)\big(I_1(x)+I_2(x)\big)$,
where
\bea
I_1(x) = \int_0^x \sum_{j=1}^4s_{1j}\exp\Big(-(\Imag\mu_j-\Imag\mu_k)(x-\tau)\Big)|B(\tau)|d\tau, \label{eq:ineq_I1}\\[4pt]
I_2(x) = \int_x^\infty \sum_{j=1}^4s_{2j}\exp\Big(-(\Imag\mu_j-\Imag\mu_k)(x-\tau)\Big)|B(\tau)|d\tau. \label{eq:ineq_I2}
\eea
We have
\be
I_2(x) \leq 4\int_x^\infty |B(\tau)| d\tau \leq 4\int_x^\infty \frac{1+\tau^3}{1+x^3}|B(\tau)|d\tau\leq \frac{c_2}{1+x^3},
\ee
where $c_2$ is a constant, independent of $\lambda$. We also have
\be
\begin{gathered}
I_1(x)\leq 4 \int_0^x \exp\big(-\theta_0(x-\tau)\big)|B(\tau)| d\tau \leq \\[4pt] 
4\left(\int_0^{x/2} \exp\big(-\theta_0(x-\tau)\big)|B(\tau)| d\tau
+ \int_{x/2}^x |B(\tau)| d\tau\right) \leq \\[4pt] 
  4\left(\exp(-\theta_0x/2)\int_0^{x/2} |B(\tau)| d\tau
+ \int_{x/2}^x \frac{1+8\tau^3}{1+x^3}|B(\tau)| d\tau\right) \leq \frac{c_1}{1+x^3},
\end{gathered}
\ee
where $c_1$ is another constant. In this way we get the bound on $|v_k^+(x,\lambda)|$ of equation (\ref{eq:app:bound_v}). The function $y_k^+$ extended to $\R\times[\lambda_s,\infty)$ with the required  properties is obtained by solving the system (\ref{eq:system}) with the initial condition at $x=0$ given by $y(0)=\Pi(\lambda)\big(e_k+v_k^+(0,\lambda)\big)$.

The existence of the functions $y_k^-$ with the required properties is proved similarly.
\end{proof}

The matrix $\Pi(\lambda)$ is singular at $\lambda=\hp$ because in this case $\nu=0$ and $p_2=p_3$. A fundamental matrix of the system $y^\prime=Ay$ with an inverse bounded in $[\hp,\lambda_s]$ has to be built in a different way.
Let $\Psi(x)$ the $4\times 4$ matrix whose first, second, and fourth columns are $p_1$, $p_2$, and $p_4$, respectively, and whose third column is 
\be
\frac{1}{\nu}\big(p_3-\exp(-\iu 2\nu x)p_2\big) \text{\ \ if\ \ } \lambda\in(\hp,\lambda_s], \quad xe_1+e_2 \text{\ \ if\ \ } \lambda=\hp.
\ee
Again the $\lambda$ dependence of $\Psi(x)$ is not explicitly shown to avoid unwieldy expressions.
Clearly, $\Psi(x)D(x)$ is a fundamental matrix of $y^\prime=Ay$. We notice
\be
\det \Psi(x) =\frac{1}{\nu}\det\Pi = 2\theta(\theta^2+\nu^2)\geq 2\theta_0^3>0,\quad \lambda\geq\hp.
\ee
From this it is easy to see that there is $c_0$ is independent of $x$ and $\lambda$ such that $|\Psi(x)|\leq c_0 (1+|x|)$ and $|\Psi^{-1}(x)|\leq c_0 (1+|x|)$ hold for $(x,\lambda)\in\R\times[\hp,\lambda_s]$.
 
\begin{thm}
\label{thm:sols_system_branch}
For any $\lambda_s>\hp$ there are functions $z_k^+$ and $z_k^-$, $1\leq k\leq 4$, with domain $\R\times[\hp,\lambda_s]$ and image in $\C^4$, which are written in the form
\be
\begin{gathered}
z_k^+(x,\lambda) = \exp(\iu\mu_kx) \big(p_k+w^+_k(x,\lambda)\big), \\[4pt]
 z_k^-(x,\lambda) = \exp(\iu\mu_kx) \big(p_k+w^-_k(x,\lambda)\big),
\end{gathered}
\ee
such that
\begin{enumerate}
\item $\big\{z_k^+(\cdot,\lambda),\,1\leq k\leq 4\big\}$ and
$\big\{z_k^-(\cdot,\lambda),\,1\leq k\leq 4\big\}$ are two fundamental sets of solutions of the system (\ref{eq:system}) for each $\lambda\in(\hp,\lambda_s]$.
\item The functions $w_k^+$ and $w_k^-$ are continuous on $\R\times[\hp,\lambda_s]$, and satisfy
\bea
&&|w_k^+(x,\lambda)|\leq c\,(1+x)^{-1}, \quad x\geq 0, \quad \lambda\in[\hp,\lambda_s], \label{eq:app:bound_w} \\[4pt]
&&|w_k^-(x,\lambda)|\leq c\,(1-x)^{-1}, \quad x\leq 0, \quad \lambda\in[\hp,\lambda_s],
\hspace*{1.5cm}
\eea
where $c$ is a constant (independent of $x$ and $\lambda$, it may depend on $\lambda_s$).
\item For fixed $x\in\R$, $w_k^+(x,\cdot)$ and $w_k^-(x,\cdot)$ are analytic in $(\hp,\lambda_s]$.
\end{enumerate}
\end{thm}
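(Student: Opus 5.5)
We argue as in the proof of theorem \ref{thm:sols_system_no_branch}, with $\Pi$ replaced by the matrix $\Psi(x)$, whose determinant is bounded away from zero uniformly on $[\hp,\lambda_s]$. Since $\lambda$ ranges over a compact interval, smallness in $\lambda$ is not available. The factor $1/\nu$ that gave smallness before is now replaced by the decay of $|B|$ at infinity, which we will trade against the linear growth of $|\Psi(x)|$ and $|\Psi^{-1}(x)|$. We treat $z_k^+$ on $x\geq 0$; the functions $z_k^-$ are obtained by the reflection $x\to -x$. The key object is the fundamental matrix $Y(x)=\Psi(x)D(x)$ of $y'=Ay$. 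For $\lambda\in(\hp,\lambda_s]$ it satisfies $Y(x)Y(\tau)^{-1}=\Pi D(x-\tau)\Pi^{-1}$. Its third column mixes $e^{\iu\nu x}$ and $e^{-\iu\nu x}$ as $(e^{-\iu\nu x}-e^{\iu\nu x})/\nu$, so it remains bounded by $c(1+|x|)$ as $\nu\to 0$ and converges to the $\lambda=\hp$ column $xe_1+e_2$. Hence we will use the representation
\be
Y(x)Y(\tau)^{-1}=\Psi(x)D(x-\tau)\Psi(\tau)^{-1},
\ee
whose entries in the oscillatory block are bounded by $c(1+|x-\tau|)$ uniformly in $\lambda\in[\hp,\lambda_s]$.

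\emph{Integral equation.} For fixed $k$ we split the modes according to $\Imag\mu_j$ relative to $\Imag\mu_k$. For $k=2,3$ the two oscillatory modes ($\Imag\mu=0$) are placed in the part integrated from $x$ to $\infty$. The projections $P_1,P_2$ are taken in the basis given by the columns of $\Psi$. The equation is
\be
y(x)=e^{\iu\mu_kx}p_k+\int_0^x Y(x)P_1Y(\tau)^{-1}B(\tau)y(\tau)\,d\tau-\int_x^\infty Y(x)P_2Y(\tau)^{-1}B(\tau)y(\tau)\,d\tau .
\ee
For $k=1$, which decays, the oscillatory modes go into the $\int_0^x$ part; this is harmless for the following reason. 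After factoring out $e^{\iu\mu_kx}$, the kernel of the growing or oscillating part relative to $k$ is bounded by $c(1+|x-\tau|)\le c(1+x)(1+\tau)$ wherever the modes are neutral, and by $ce^{-\theta_0|x-\tau|}$ wherever the modes are exponentially separated.

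\emph{Bounds.} The linear factor is absorbed by the moment condition: $\int(1+|\tau|)^{3}|B(\tau)|\,d\tau<\infty$, with three moments available. The successive approximations converge for $x\ge x_0$ with $x_0$ large, because $\int_{x_0}^\infty(1+\tau)|B|\,d\tau$ is small; this replaces the largeness of $\lambda_s$ used before. We then extend to $[0,x_0]$ by solving the ODE, which changes the bound only by a constant depending on $x_0$ and $\lambda_s$. The remainder estimate follows as in theorem \ref{thm:sols_system_no_branch}:
\be
|w_k^+(x,\lambda)|\le c\int_x^\infty (1+\tau-x)\,|B(\tau)|\,d\tau+\cdots\le \frac{c}{1+x}.
\ee
The $I_1$-type term is bounded by splitting at $x/2$, exactly as before, using the $\tau^3$ weight to produce the $(1+x)^{-1}$ decay even after the linear loss. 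Continuity on $\R\times[\hp,\lambda_s]$, including at $\lambda=\hp$, comes from uniform convergence of the iterates, since the kernel is continuous in $\lambda$ up to $\hp$. Analyticity on $(\hp,\lambda_s]$ holds because each iterate is analytic there and the convergence is locally uniform. Fundamentality for $\lambda>\hp$ follows by letting $x\to\infty$ in the determinant: the columns approach $e^{\iu\mu_kx}p_k$, and $\det\Pi\neq0$ for $\lambda>\hp$.

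\emph{Main obstacle.} The hardest step is controlling the neutral modes near $\lambda=\hp$ uniformly. For $k=2,3$ the relevant kernel is $Y(x)P_2Y(\tau)^{-1}$ restricted to the $\{p_2,p_3\}$ block, and one must show it is $O(1+|x-\tau|)$ uniformly, with no $1/\nu$ blow-up. We will establish this by writing the block explicitly as the evolution of $y''=-\nu^2y$-type modes, i.e. in terms of $\cos(\nu s)$ and $\sin(\nu s)/\nu$, and bounding $|\sin(\nu s)/\nu|\le |s|$. A second check is needed at $\lambda=\hp$ itself, where the stated form $z_k^+=e^{\iu\mu_kx}(p_k+w_k^+)$ for $k=2,3$ should be understood via the continuous limit.
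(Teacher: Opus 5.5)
Apart from one point, your route is the paper's. You use a Levinson-type integral equation built on the fundamental matrix $\Psi(x)D(x)$ of $y'=Ay$, posed on $[x_0,\infty)$ with $x_0$ so large that the tail of $(1+\tau)|B(\tau)|$ gives the contraction. The linear growth of $\Psi$ and $\Psi^{-1}$ is paid for by the third moment of $B$, and you extend to $[0,x_0]$ by solving the system. Your translation-invariant bound $O(1+|x-\tau|)$ for the $\{p_2,p_3\}$ block (via $e^{A(x-\tau)}$ on the $\pm\iu\nu$ spectral subspace) is correct and sharper than the paper's $|\Psi(x)|\,|\Psi^{-1}(\tau)|\le c_0^2(1+x)(1+\tau)$. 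Since you relax it to the same product bound, it changes nothing.

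The step that fails is your mode splitting for $k=1$. Since $e^{\iu\mu_1x}=e^{-\theta x}$, the oscillatory modes $j=2,3$ grow like $e^{\theta x}$ relative to mode $1$. If you place them in $\int_{x_0}^x$, then after factoring out $e^{\iu\mu_1x}$ the kernel on $x_0<\tau<x$ has size $(1+x-\tau)e^{\theta(x-\tau)}$, not $c\,e^{-\theta_0|x-\tau|}$ as you claim. Let $Q_{23}$ be the spectral projection of $A$ onto the $\pm\iu\nu$ eigenspaces. Already the first correction, $\int_{x_0}^{x}e^{A(x-\tau)}Q_{23}B(\tau)e^{-\theta\tau}p_1\,d\tau$, tends to a generically nonzero oscillatory solution rather than being $o(e^{-\theta x})$. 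So the iteration does not contract in the norm $\sup_{x\ge x_0}|e^{-\iu\mu_kx}y(x)|$, and you cannot obtain $e^{-\theta x}(p_1+w_1^+)$ with $|w_1^+|\le c(1+x)^{-1}$.

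The correct rule is the paper's $s_{1j}$: integrate over $(x_0,x)$ exactly the modes with $\Imag\mu_j>\Imag\mu_k$, and over $(x,\infty)$ all the others.
\begin{itemize}
\item For $k=1$ every mode goes into $\int_x^\infty$. There the relative factors $1$, $(1+\tau-x)e^{-\theta(\tau-x)}$ and $e^{-2\theta(\tau-x)}$ are harmless.
\item For $k=4$, which you do not discuss, modes $1,2,3$ go into $\int_{x_0}^x$. There the linear growth of the oscillatory block is damped by $e^{-\theta(x-\tau)}$.
\end{itemize}
With this correction your estimates go through exactly as in the paper.

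A smaller point, which the paper treats equally briefly: locally uniform convergence on a real interval does not by itself give analyticity in $\lambda$. You need uniform convergence for complex $\lambda$ in a neighbourhood of $(\hp,\lambda_s]$. For $k=2,3$ the real-axis splitting is compatible with only one of the half-planes $\Imag\lambda>0$, $\Imag\lambda<0$ when $B$ decays merely polynomially, so this step needs more than what you wrote.
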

\begin{proof}
For $1\leq k\leq 4$ we obtain $z_k^+$ as follows. For $l=1,2$ and $1\leq j\leq 4$
define $s_{lj}$ and $D_l(x)$ as in the proof of theorem \ref{thm:sols_system_branch}.
Notice that $D(x)=D_1(x)+D_2(x)$ and that $\Psi(x) D_l(x)$, $l=1,2$, are solution matrices of the system (\ref{eq:system}). Let $x_0\geq 0$ be a constant to be determined, and for $(x,\lambda)\in[x_0,\infty)\times[\hp,\lambda_s]$ consider the integral equation
\be
\begin{gathered}
y(x,\lambda) = \exp(\iu\mu_kx)p_k+\int_{x_0}^x \Psi(x) D_1(x)D(-\tau)\Psi^{-1}(\tau)B(\tau)y(\tau,\lambda)d\tau \\[4pt]
-\int_x^\infty \Psi(x) D_2(x)D(-\tau)\Psi^{-1}(\tau)B(\tau)y(\tau,\lambda)d\tau.
\end{gathered}
\ee
If this equation has a solution, then it is a solution of the system (\ref{eq:system}). Let us consider the successive approximations $y^{(0)}(x,\lambda)=0$ and, for $j\in\N$,
\be
\begin{gathered}
y^{(j)}(x,\lambda) \!=\! \exp(\iu\mu_kx)p_k \!+\! \!\int_{x_0}^x\!\!\!\! \Psi(x) D_1(x)D(-\tau)\Psi^{-1}(\tau)B(\tau)\,y^{(j-1)}(\tau,\lambda)d\tau \\[4pt]
-\int_x^\infty \Psi(x)D_2(x)D(-\tau)\Psi^{-1}(\tau)B(\tau)\,y^{(j-1)}(\tau,\lambda)d\tau.
\end{gathered}
\label{eq:app:approx_2}
\ee
We have $y^{(1)}(x,\lambda)=\exp(\iu\mu_k x)p_k$ and $|y^{(1)}(x,\lambda)|=\exp(-\Imag\,\mu_k x)|p_k|$. 
The functions $y^{(j)}$ are continuous on $[x_0,\infty)\times[\hp,\lambda_s]$ and, for fixed $x\geq x_0$, $y^{(j)}(x,\cdot)$ are analytic in $(\hp,\lambda_s]$.
Define
$c_1=\max\{|p_k|,\,1\leq k\leq 4,\,\hp\leq\lambda\leq\lambda_s\}$.
We now show that if $x_0$ is large enough then
\be
\big|y^{(j+1)}(x,\lambda)-y^{(j)}(x,\lambda)\big|\leq c_1 (1/2)^j \exp(-\Imag\,\mu_k x)
\label{eq:app:ineq_2}
\ee
for  $x\geq x_0$, $\lambda\in[\hp,\lambda_s]$, and $j\in\N\cup\{0\}$. We proceed by induction. For $j=0$ the inequality holds. Suppose that it holds for some $j\in\N\cup\{0\}$. This assumption, equation (\ref{eq:app:approx_2}), and the bounds on $\Psi(x)$ and $\Psi^{-1}(x)$ imply
\be
\big|y^{(j+2)}(x,\lambda)-y^{(j+1)}(x,\lambda)\big|\leq c_2(1+x)(1/2)^j \exp(-\Imag\,\mu_kx) J(x),
\ee
where $c_2$ is a constant, $J(x) = I_1(x)+I_2(x)$, and
\bea
I_1(x) \!=\!\! \int_{x_0}^x \!\sum_{j=1}^4s_{1j}\exp\Big(\!-(\Imag\mu_j-\Imag\mu_k)(x-\tau)\Big)(1+\tau)|B(\tau)|d\tau, 
\label{eq:app:I1} \\[4pt]
I_2(x) \!=\!\! \int_x^\infty \!\sum_{j=1}^4s_{2j}\exp\Big(\!-(\Imag\mu_j-\Imag\mu_k)(x-\tau)\Big)(1+\tau)|B(\tau)|d\tau.
\label{eq:app:I2}
\eea
As in the proof of theorem \ref{thm:sols_system_no_branch}, inequalities (\ref{eq:ineq_I1}) and (\ref{eq:ineq_I2}), we get that $I_1(x)$ and $I_2(x)$ are bounded by $c_3/(1+x)^2$ if $x\geq x_0$.
Then, choosing $x_0$ so that $2c_3c_2/(1+x_0)\leq 1/2$ we get
\be
\big|y^{(j+2)}(x,\lambda)-y^{(j+1)}(x,\lambda)\big|\leq c_1(1/2)^{j+1} \exp(-\Imag\,\mu_kx),
\ee
for all $x\geq x_0$ and $\lambda\in[\hp,\lambda_s]$, and inequality (\ref{eq:app:ineq_2}) is proved. Hence, the sequence $y^{(j)}$ converges uniformly on each set $[x_0,x_1]\times[\hp,\lambda_s]$, with $x_1>x_0$, to a function $z_k^+$ which, consequently, is continuous on $[x_0,\infty)\times[\hp,\lambda_s]$. Moreover, for fixed $x\in[x_0,\infty)$, the function $z_k^+(x,\cdot)$ is analytic on $(\hp,\lambda_s]$. The bound $|z_k^+(x,\lambda)|\leq 2c_1\exp(-\Imag\,\mu_kx)$, for $x\geq x_0$, is easily obtained by summing the inequality
\be
|y^{(j+1)}(x,\lambda)| - |y^{(j)}(x,\lambda)| \leq c_1(1/2)^j\exp(-\Imag\,\mu_kx),
\ee
from $j=0$ to $l$ and taking the limit $l\to\infty$. From $z_k^+(x,\lambda)$ we obtain $w_k^+(x,\lambda)=\exp(-\iu\mu_kx)z_k^+(x,\lambda)-p_k$, so that
\be
\begin{gathered}
\!w_k^+(x,\lambda) = \exp(-\iu\mu_kx)\!\left(\int_{x_0}^x\!\!\Psi(x)D_1(x)D(-\tau)\Psi^{-1}(\tau)B(\tau)z_k^+(\tau,\lambda)d\tau\right. 
\\[4pt]
\left. -\int_x^\infty \Psi(x)D_2(x)D(-\tau)\Psi^{-1}(\tau)B(\tau)z_k^+(\tau,\lambda)d\tau\right).
\end{gathered}
\ee
From this equation and the bounds on the different factors of the integrands, we get $|w_k^+(x,\lambda)|\leq 2c_1c_0^2(1+x)\exp(-\Imag\,\mu_kx)\big(I_1(x)+I_2(x)\big)$, for $x\geq x_0$,
where $I_1(x)$ and $I_2(x)$ are given by equations (\ref{eq:app:I1}) and (\ref{eq:app:I2}), respectively.
From the bounds on $I_1(x)$ and $I_2(x)$ obtained above we get the estimate $|w_k^+(x,\lambda)|\leq c/(1+x)$ for $(x,\lambda)\in[x_0,\infty)\times[\hp,\lambda_s]$. The function $z_k^+$ extended to $\R\times[\hp,\lambda_s]$ with the required  properties is obtained by solving the system (\ref{eq:system}) with the initial condition at $x=x_0$ given by $y(x_0)=p_k+w_k^+(x_0,\lambda)$. The functions $z^+_k$ thus obtained are continuous $\R\times[\hp,\lambda_s]$ and hence bounded on $[0,x_0]\times[\hp,\lambda_s]$. Then, the bound on $w_k^+$ can be extended to $[0,\infty)\times[\hp,\lambda_s]$, by increasing $c$ if necessary.

The existence of the functions $z_k^-$ with the required properties is proved similarly.
\end{proof}

For $1\leq k\leq 4$ the functions $\phip_k$ and $\phim_k$ of theorem \ref{thm:sols_edo} are obtained from the functions $y_k^+$, $y_k^-$, $z_k^+$, and $z_k^-$ of theorems \ref{thm:sols_system_no_branch} and \ref{thm:sols_system_branch} as follows. For $x\in\R$:
\be
\begin{gathered}
\phip_k(x,\lambda)=\left\{
\begin{array}{ll}
e_1\cdot z_k^+(x,\lambda), & \lambda\in[\hp,\lambda_s], \\[4pt]
e_1\cdot y_k^+(x,\lambda), & \lambda\in(\lambda_s,\infty), 
\end{array}
\right.
\\[4pt]
\phim_k(x,\lambda)=\left\{
\begin{array}{ll}
e_1\cdot z_k^-(x,\lambda), &  \lambda\in[\hp,\lambda_s], \\[4pt]
e_1\cdot y_k^-(x,\lambda), & \lambda\in(\lambda_s,\infty).
\end{array}
\right.
\end{gathered}
\label{eq:app:phip_phim}
\ee

\section{Proof of an ancillary result \label{app:ancillary}}

The next lemma is used in the proof of lemma \ref{thm:sols_bounded}.

\begin{lem}
\label{thm:continuity_coefficients}
Let $n$ be a natural number, $I$ and $M$ subsets of $\R$, with $I$ open, and $f_j:I\times M\to\C$, $1\leq j\leq n$, continuous bounded functions. Suppose that for each $\lambda\in M$ the functions $f_j(\cdot,\lambda)$ are linearly independent, consider functions $c_j:M\to\C$, and define $f:I\times M\to\C$ by
\be
f(x,\lambda) = \sum_{j=1}^nc_j(\lambda)f_j(x,\lambda), \quad  (x,\lambda)\in I\times M.
\ee
If $f$ is continuous and bounded, then the functions $c_j$ are continuous.
\end{lem}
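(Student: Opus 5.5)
The plan is to recover each coefficient $c_j(\lambda)$ by evaluating $f$ at finitely many well-chosen points $x$, and so to write $c_j(\lambda)$ locally as a continuous function of finitely many continuous quantities. Fix $\lambda_0\in M$. Since $f_1(\cdot,\lambda_0),\dots,f_n(\cdot,\lambda_0)$ are linearly independent functions on $I$, there are points $x_1,\dots,x_n\in I$ such that the matrix
\be
F(\lambda)=\big(f_j(x_i,\lambda)\big)_{1\leq i,j\leq n}
\ee
is regular at $\lambda=\lambda_0$. This is the standard fact that linearly independent functions admit a nonvanishing generalized Wronskian of point evaluations. It follows by induction on $n$: choose $x_1,\dots,x_{n-1}$ for the first $n-1$ functions. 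If $\det F$ vanished for every choice of $x_n$, then expanding along the last row gives a nontrivial linear relation among the $f_j(\cdot,\lambda_0)$. The cofactor of $f_n$ is nonzero, so the relation is nontrivial.

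Next I extend invertibility to a neighborhood. The entries $f_j(x_i,\cdot)$ are continuous functions of $\lambda$ on $M$, because each $f_j$ is jointly continuous. Hence $\det F(\lambda)$ is continuous on $M$ and nonzero in a relative neighborhood $V$ of $\lambda_0$ in $M$. For $\lambda\in V$ the relations $f(x_i,\lambda)=\sum_j c_j(\lambda)f_j(x_i,\lambda)$, $1\leq i\leq n$, form a linear system with the invertible matrix $F(\lambda)$. Cramer's rule then gives
\be
c_j(\lambda)=\frac{\det F_j(\lambda)}{\det F(\lambda)},
\ee
where $F_j$ is $F$ with its $j$-th column replaced by $\big(f(x_i,\lambda)\big)_i$. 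All entries are continuous in $\lambda$ because $f$ is continuous. So $c_j$ is continuous on $V$, and since $\lambda_0$ is arbitrary, $c_j$ is continuous on $M$.

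The only nontrivial step is the choice of the points $x_i$, that is, the induction argument above. Neither boundedness nor the openness of $I$ is needed for continuity itself. For the boundedness statements the paper draws from this lemma, one would additionally want $|\det F(\lambda)|$ bounded below uniformly on the relevant set. I would get this from continuity and compactness on bounded closed parameter sets, which is how the lemma is used in lemma \ref{thm:sols_bounded}.
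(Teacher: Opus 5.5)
Your proof is correct, and it takes a different route from the paper's. The paper uses a soft sequential-compactness argument in two steps. First, it shows the $c_j$ are bounded on every compact $J\subseteq M$: if $a_m=\max_j|c_j(\lambda_m)|\to\infty$ along $\lambda_m\to\lambda_0\in J$, dividing by $a_m$ and passing to a limit of the normalized coefficient vectors gives a nontrivial vanishing combination of the $f_j(\cdot,\lambda_0)$. Second, for $\lambda_m\to\lambda$, linear independence at $\lambda$ forces every accumulation point of the now bounded sequence $\big(c_j(\lambda_m)\big)_j$ to equal $\big(c_j(\lambda)\big)_j$.

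You instead choose finitely many points with $\det\big(f_j(x_i,\lambda_0)\big)\neq 0$ and read the coefficients off Cramer's rule. Your induction, expanding along the last row with the nonzero cofactor of $f_n$, is exactly the right justification for that choice.

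Your route is more elementary and constructive. It gives an explicit local formula, so $c_j$ inherits any regularity in $\lambda$ of the finitely many functions $f_j(x_i,\cdot)$ and $f(x_i,\cdot)$ that is preserved by rational operations, analyticity for instance. The compactness argument does not deliver this directly. The paper's route avoids the auxiliary point-selection step and uses linear independence only through limits.

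Both proofs use only continuity in $\lambda$ at fixed $x$. In the paper's first step, boundedness of $f$ can be replaced by $f(x,\lambda_m)\to f(x,\lambda_0)$. So neither proof needs the boundedness hypotheses or the openness of $I$, as you observed for your own argument.

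On your closing remark: no uniform lower bound on $|\det F|$ is needed for the use in lemma \ref{thm:sols_bounded}. Boundedness of the coefficients on bounded pieces of $\overline{M_n}$, taken separately on either side of $\lambda_s$, follows at once from continuity on the compact closures of those pieces.
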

\begin{proof}
The functions $c_j$ are bounded on any compact subset $J\subseteq M$. Otherwise, there is a sequence $\lambda_m\in J$ such that $\lim_m a_m=\infty$, where $a_m=\max\{|c_j(\lambda_m)|,\,1\leq j\leq n\}$. Since $J$ is compact, the sequence can be taken convergent, so that $\lambda_m\to\lambda_0\in J$. Obviously, we have 
$|c_j(\lambda_m)|/a_m\leq 1$ and $\max\{ |c_j(\lambda_m)|/a_m,\,1\leq j\leq n\} = 1$, and therefore the sequence $\lambda_m$ can be taken so that $|c_j(\lambda_m)|/a_m$ is convergent for $1\leq j\leq n$. Let $\alpha_j$ denote the corresponding limit. We have
\be
\sum_{j=1}^n \frac{|c_j(\lambda_m)|}{a_m}f_j(x,\lambda_m) = \frac{1}{a_m}f(x,\lambda_m).
\ee
Taking the limit $m\to\infty$ in the above equality we obtain $\sum_j\alpha_j f_j(x,\lambda_0)=0$, and the linear independence of the functions $f_j(\cdot,\lambda_0)$ implies $\alpha_j=0$ for $1\leq j\leq n$. However, this is not possible since $\max\{ |c_j(\lambda_m)|/a_m,\,1\leq j\leq n\} = 1$ for all $m\in\N$. Therefore, the functions $c_j$ are bounded on each compact set $J$ contained in $M$.

Now take a sequence $\lambda_m\in M$ which converges to $\lambda\in M$. The set $\{\big(c_1(\lambda_m),\ldots,c_n(\lambda_m)\big),\,m\in\N\}$ is bounded in $\C^n$. Let $(a_1,\ldots,a_n)\in\C^n$ be one of its accumulation points. There is a subsequence $\lambda_m^\prime$ such that $c_j(\lambda_m^\prime)$ converges to $a_j$ for $1\leq j\leq n$. Taking the limit $m\to\infty$ in the expression 
\be
\begin{gathered}
\sum_{j=1}^n\big(c_j(\lambda)-c_j(\lambda_m^\prime)\big)f_j(x,\lambda) = f(x,\lambda) - f(x,\lambda_m^\prime) \\[-6pt]+ \sum_{j=1}^nc_j(\lambda_m^\prime)\big(f_j(x,\lambda)-f_j(x,\lambda_m^\prime)\big)
\end{gathered}
\ee
we get $\sum_{j=1}^n\big(c_j(\lambda)-a_j\big)f_j(x,\lambda)=0$ for all $x\in\R$.
The linear independence of the functions $f_j(\cdot,\lambda)$ implies that $a_j=c_j(\lambda)$. Therefore, the accumulation point of the sequence $c_j(\lambda_m)$ is unique and the same for all sequences $\lambda_m$ that converge to $\lambda$, and is given by $c_j(\lambda)$. Hence the functions $c_j$ are continuous on $M$.
\end{proof}

\bibliographystyle{apa}
\bibliography{references_spectral.bib}

\begin{thebibliography}{}

\bibitem[\protect\astroncite{Bade}{1954}]{Bade1954}
Bade, W.~G. (1954).
\newblock Unbounded spectral operators.
\newblock {\em Pacific J. Math}, 4:373--392.

\bibitem[\protect\astroncite{Bagarello et~al.}{2015}]{Bagarello2015}
Bagarello, F., Gazeau, J.-P., Szafraniec, F.~H., and Znojil, M., editors
  (2015).
\newblock {\em Non-Selfadjoint Operators in Quantum Physics. Mathematical
  Aspects.}
\newblock {John Wiley \& Sons}.

\bibitem[\protect\astroncite{Coddington and Levinson}{1972}]{Coddington1972}
Coddington, E.~A. and Levinson, N. (1972).
\newblock {\em Theory of Ordinary Differential Equations}.
\newblock Tata McGraw-Hill.

\bibitem[\protect\astroncite{Dunford}{1954}]{Dunford1954}
Dunford, N. (1954).
\newblock Spectral operators.
\newblock {\em Pac. J. Math.}, 4:321--354.

\bibitem[\protect\astroncite{Dunford}{1958}]{Dunford1958}
Dunford, N. (1958).
\newblock A survey of the theory of spectral operators.
\newblock {\em Bull. Amer. Math. Soc.}, 64:217--274.

\bibitem[\protect\astroncite{Dunford and Schwartz}{1971}]{Dunford1971}
Dunford, N. and Schwartz, J.~T. (1971).
\newblock {\em Linear operators, Part III, Spectral operators}.
\newblock {John Wiley \& Sons}.

\bibitem[\protect\astroncite{Edmunds and Evans}{2018}]{Edmunds2018}
Edmunds, D. and Evans, W. (2018).
\newblock {\em Spectral Theory and Differential Operators}.
\newblock Oxford University Press.

\bibitem[\protect\astroncite{Huige}{1971}]{Huige1971}
Huige, G.~E. (1971).
\newblock Perturbation theory of some spectral operators.
\newblock {\em Comm. Pure and Appl. Math.}, 24(6):741 – 757.

\bibitem[\protect\astroncite{Kemp}{1960}]{Kemp1960}
Kemp, R. R.~D. (1960).
\newblock On a class of non-self-adjoint differential operators.
\newblock {\em Canadian Journal of Mathematics}, 12:641--659.

\bibitem[\protect\astroncite{Kramer}{1957}]{Kramer1957}
Kramer, H.~P. (1957).
\newblock {Perturbations of differential operators}.
\newblock {\em Pacific J. Math.}, 7:1405--1435.

\bibitem[\protect\astroncite{Laliena and Campo}{2021}]{Laliena2021}
Laliena, V. and Campo, J. (2021).
\newblock {Magnonic Goos–H\" anchen effect induced by 1D solitons}.
\newblock {\em Adv. Electron. Mater.}, 2100782.

\bibitem[\protect\astroncite{Mackey}{1952}]{Mackey1952}
Mackey, G.~W. (1952).
\newblock {\em Commutative Banach algebras}.
\newblock {Cambridge, Mass.: Harvard University}.

\bibitem[\protect\astroncite{McGarvey}{1965}]{McGarvey1965}
McGarvey, D.~C. (1965).
\newblock {Operators commuting with translations by one. Part III. Perturbation
  results for periodic differential operators.}
\newblock {\em J. Math. Anal. Appl.}, 12:187--234.

\bibitem[\protect\astroncite{Schwartz}{1960}]{Schwartz1960}
Schwartz, J. (1960).
\newblock {Some non‐selfadjoint operators}.
\newblock {\em Comm. Pure Appl. Math.}, 13(4):609 – 639.

\bibitem[\protect\astroncite{Schwartz}{1954}]{Schwartz1954}
Schwartz, J.~T. (1954).
\newblock {Perturbations of spectral operators, and applications. I. Bounded
  perturbations.}
\newblock {\em Pacific J. Math.}, 4:415--458.

\bibitem[\protect\astroncite{Teschl}{2009}]{Teschl2009}
Teschl, G. (2009).
\newblock {\em Mathematical Methods in Quantum Mechanics}.
\newblock American Mathematical Society.

\bibitem[\protect\astroncite{Turner}{1966}]{Turner1966}
Turner, R. E.~L. (1966).
\newblock {Perturbation of ordinary differential operators.}
\newblock {\em J. Math. Anal. Appl.}, 13:447--457.

\bibitem[\protect\astroncite{Wermer}{1954}]{Wermer1954}
Wermer, J. (1954).
\newblock {Commuting spectral measures on Hilbert space}.
\newblock {\em Pacific J. Math.}, 4:355--361.

\end{thebibliography}

\end{document}